\theoremstyle{plain}
\newtheorem{theorem}{Theorem}[section]
\newtheorem{corollary}[theorem]{Corollary}
\newtheorem{lemma}[theorem]{Lemma}
\newtheorem{claim}[theorem]{Claim}
\theoremstyle{definition}
\newtheorem{definition}[theorem]{Definition}
\newtheorem{notation}[theorem]{Definition \& Notation}
\newtheorem{example}[theorem]{Example}
\newtheorem{conjecture}[theorem]{Conjecture}
\newtheorem{observation}[theorem]{Observation}
\theoremstyle{remark}
\newtheorem{remark}[theorem]{Remark}
\newcommand{\A}{\mathcal{A}}
\newcommand{\B}{\mathcal{B}}
\newcommand{\C}{\mathbb{C}}
\newcommand{\CC}{\mathsf{CC}}
\newcommand{\NC}{\mathsf{NC}}
\newcommand{\CO}{\mathsf{CO}}
\newcommand{\LC}{{2\text{-}\mathsf{LCO}}}
\newcommand{\LS}{2\text{-}\mathsf{LS}}
\newcommand{\I}{\mathsf{ID}}
\newcommand{\R}{\mathbb{R}}
\newcommand{\K}{\mathbb{K}}
\newcommand{\scS}{\mathcal{S}}
\newcommand{\Z}{\mathbb{Z}}
\newcommand{\h}{{\rm h}}
\newcommand{\vn}{\noindent}
\newcommand{\dsc}{{\rm dsc}}
\newcommand{\M}{\mathcal{M}}
\newcommand{\tbf}{\textbf} 
\newcommand{\cc}{\mathbf{c}}
\newcommand{\quasi}{\operatorname{quasi}}
\newcommand{\codim}{\operatorname{codim}}
\newcommand{\Mat}{\operatorname{Mat}}
\DeclareMathOperator{\Der}{Der}
\newcolumntype{K}[1]{>{\centering\arraybackslash}p{#1}}
\begin{document}

\title[Worpitzky-compatible and Shi-free arrangements]{Worpitzky-compatible sets and the freeness of arrangements between Shi and Catalan}

\date{\today}

\begin{abstract}
Given an irreducible root system, the Worpitzky-compatible subsets are defined by a geometric property of the alcoves inside the fundamental parallelepiped of the root system. 
This concept is motivated and  mainly understood through a lattice point counting formula concerning the characteristic and Ehrhart quasi-polynomials. 
In this paper, we show that the Worpitzky-compatibility has a simple combinatorial characterization in terms of roots. 
As a byproduct, we obtain a complete characterization by means of Worpitzky-compatibility for the freeness of the arrangements interpolating between the extended Shi and Catalan arrangements. 
This is a completion of the earlier result by Yoshinaga in 2010 which was done for simply-laced root systems. 
 \end{abstract}

\author{Takuro Abe}
\address{Takuro Abe, Department of Mathematics, Rikkyo University, 3-34-1 Nishi-Ikebukuro, Toshima-ku, 1718501 Tokyo, Japan.}
\email{abetaku@rikkyo.ac.jp}
 
\author{Tan Nhat Tran}
\address{Tan Nhat Tran, Institut f\"ur Algebra, Zahlentheorie und Diskrete Mathematik, Fakult\"at f\"ur Mathematik und Physik, Leibniz Universit\"at Hannover, Welfengarten 1, D-30167 Hannover, Germany.}
\email{tan.tran@math.uni-hannover.de}

\subjclass[2010]{Primary: 52C35, Secondary: 17B22}
\keywords{Free arrangement, root system, extended Shi arrangement, extended Catalan arrangement, ideal, coclosed set, Worpitzky-compatible set}

\date{\today}
\maketitle

\section{Introduction}
\label{sec:intro}

\subsection{Back ground and motivation}
\label{subsec:mov}

Let   $V=\R^\ell$ with the standard inner product $(\cdot,\cdot)$.
 Let $\Phi$ be an irreducible (crystallographic) root system in $V$. 
Let $\Delta := \{\alpha_1,\ldots,\alpha_\ell \}$ be a set of simple roots of $\Phi$ and $\Phi^+$ the positive system associated to $\Delta$. 
 For $n \in \Z$ and $\alpha \in  \Phi^+$, define an affine hyperplane $H_{\alpha}^n := \{x \in V \mid (\alpha,x) = n\}$ in $V$. 
For a hyperplane arrangement $\A$ in $V$, denote by $\cc \A$ the \emph{cone} of $\A$ (see \S\ref{subsec:free}).
 
  \begin{definition}
 \label{def:Shi+-}
  For a nonnegative integer $k \in \Z_{\ge 0}$ and a subset $\Sigma\subseteq\Phi^+$, define the following hyperplane arrangement in $V$:
$$
 \scS^k_{\Sigma} = \scS^k_{\Sigma} (\Phi) := \{ H_{\alpha}^n \mid \alpha \in \Phi^+, 1-k \le n \le k\} \cup \{ H_{\alpha}^{-k} \mid  \alpha \in \Sigma\}.
$$
The subset $\Sigma$ is called \tbf{Shi-free} (resp.~\tbf{free}) if the cone $\cc\scS^k_{\Sigma}$ is a free arrangement for every $k>0$  (resp.~for $k=0$).
\end{definition}

Free arrangements are defined formally in Definition \ref{def:free-arr}. 
In brief, an arrangement is called  \tbf{free} if its  \emph{module of logarithmic derivations} is a free module.  
The (Shi-)freeness of root systems has been a central topic in the study of free arrangements for decades. 
For simply-laced (type $ADE$) root systems, a characterization for the Shi-freeness is known due to Yoshinaga \cite{Yo10} (Theorem \ref{thm:SL-CC}).
The ultimate goal of this paper is to complete this characterization for all root systems  (Theorem \ref{thm:SF-characterize}).
First let us give more information about the freeness of $\cc \scS^k_{\Sigma}$. 

 \begin{enumerate}[(1)] 
\item Let $k=0$. When $\Sigma = \Phi^+$, the arrangement $\A_{\Phi^+}:= \scS^0_{\Phi^+}$ is known as the \tbf{Weyl arrangement} of $\Phi$. 
For arbitrary $\Sigma$, $\A_\Sigma:= \scS^0_{\Sigma}$ is a subarrangement of $\A_{\Phi^+}$. 
The Weyl arrangement is a well-known free arrangement, e.g.~\cite{S93}, \cite[Theorem 6.60]{OT92}. 
If the root system $\Phi$ is of type $A$, then $\A_\Sigma$ can be identified with a \emph{graphic arrangement} (see \S\ref{sec:remark}) whose freeness is completely characterized by \emph{chordal graphs} \cite{St72, ER94}. 
Apart from type $A$, the freeness of  arbitrary $\Sigma$ is unknown in general. 

\vskip .5em

\item Let $k>0$. When $\Sigma = \emptyset$ and $\Sigma = \Phi^+$, the arrangements $\mathrm{Shi}^{[1-k,k]}_{\Phi}:= \scS^k_{\emptyset}$ and $\mathrm{Cat}^{[-k,k]}_{\Phi}:= \scS^k_{\Phi^+}$ are known as the  \tbf{extended Shi arrangement} and \tbf{extended Catalan arrangement}, respectively. 
Thus the arrangement $ \scS^k_{\Sigma}$, when $\Sigma$ varies, can be regarded as an interpolation between the extended Shi and Catalan arrangements. 
The freeness of $\cc\mathrm{Shi}^{[1-k,k]}_{\Phi}$ and $\cc\mathrm{Cat}^{[-k,k]}_{\Phi}$ had been conjectured by Edelman-Reiner \cite{ER96} until they were affirmatively settled by Yoshinaga \cite{Yo04}. 

\vskip .5em

\item The most significant class for which the Shi-freeness is known to be true for any root system is that of the \emph{ideals}. 
 The \tbf{root poset}  $(\Phi^+, \ge)$ is the poset with partial order defined by $\beta_1 \ge \beta_2$ if $\beta_1-\beta_2 \in\sum_{i=1}^\ell \Z_{\ge 0}\alpha_i$. 
A  subset $\Sigma\subseteq\Phi^+$ is called an \tbf{ideal} if for $\beta_1,\beta_2 \in \Phi^+$, $\beta_1 \ge \beta_2, \beta_ 1 \in \Sigma$ implies $\beta_2 \in \Sigma$. 
Then for any ideal $\Sigma$ and $k \ge0$, the cone $\cc \scS^k_{\Sigma}$ is always free. 
The case $k=0$ was first partially proved by Sommers-Tymoczko \cite{ST06} and  later completely settled by Abe-Barakat-Cuntz-Hoge-Terao \cite{ABCHT16}. 
The case $k>0$ was done in a follow-up paper of Abe-Terao \cite{AT16}. 

\vskip .5em

\item There is another arrangement closely related to $  \scS^k_{\Sigma}$. Define
$$
  \scS^k_{-\Sigma} := \{ H_{\alpha}^n \mid \alpha \in \Phi^+, 1-k \le n \le k\} \setminus \{ H_{\alpha}^{k} \mid  \alpha \in \Sigma\}.
  $$
Abe-Terao \cite{AT16} showed that  if $k>0$, then $\cc\scS^k_{\Sigma}$ and  $ \cc\scS^k_{-\Sigma}$ share the freeness, i.e.~$\cc\scS^k_{\Sigma}$ is free if and only if  $ \cc\scS^k_{-\Sigma}$ is free (Theorem \ref{thm:exp}). 
If this occurs for some $k>0$, then $\A_\Sigma=\scS^0_{\Sigma}$ is also free. 
Thus the freeness of $\Sigma$ is a necessary (but not sufficient) condition for its Shi-freeness. 

\vskip .5em

\item Towards a search for a full characterization of the Shi-freeness, it is essential to extend the class of ideals. 
A  subset $\Sigma\subseteq\Phi^+$ is called \tbf{coclosed} if for any $\alpha \in \Sigma $ and $\beta_1,\beta_2 \in\Phi^+ $ such that $\alpha =d_1\beta_1+d_2\beta_2$ with $d_1,d_2 \in \Z_{> 0}$, either $\beta_1 \in \Sigma$ or $\beta_2  \in \Sigma$. 
It is easy to see that every ideal of a root system is coclosed. 
For simply-laced root systems, Yoshinaga  showed that the coclosedness is the missing piece of a sufficient condition for the Shi-freeness.
 \end{enumerate}
 
 \begin{theorem}{\cite[Theorem 5.1]{Yo10}}
 \label{thm:SL-CC}
Let $\Phi$ be an irreducible root system of type $ADE$ and $\Sigma\subseteq\Phi^+$. 
Then $\Sigma$ is Shi-free if and only if  $\Sigma$ is free and coclosed. 
\end{theorem}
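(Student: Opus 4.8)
The plan is to prove the two implications separately and to concentrate the combinatorial content into the characteristic polynomial. For the \emph{only if} direction one half is immediate: if $\Sigma$ is Shi-free, then $\cc\scS^k_\Sigma$ is free for every $k>0$, and by the freeness-sharing of Theorem \ref{thm:exp} (item (4) above) this already forces $\A_\Sigma=\scS^0_\Sigma$ to be free. So the real content of this direction is that Shi-freeness forces coclosedness, and I would establish the contrapositive through Terao's factorization theorem: the characteristic polynomial of a free central arrangement factors into linear forms over $\Z$. It therefore suffices to compute $\chi(\cc\scS^k_\Sigma,t)$ and to exhibit, for a $\Sigma$ that is \emph{not} coclosed, some $k>0$ for which this polynomial fails to split over $\Z$, contradicting freeness of the cone.

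The computation of $\chi(\scS^k_\Sigma,t)$ I would carry out by the finite-field method: for a large prime $q$, the value $\chi(\scS^k_\Sigma,q)$ counts the points of $\F_q^\ell$ lying off every hyperplane $H_\alpha^n$, and this count can be reorganized as a sum over the alcoves and lattice points of the fundamental parallelepiped, which is precisely the Worpitzky-type correspondence motivating the paper. The key claim to prove is that this count splits into linear factors in $q$ exactly when, for each $\alpha\in\Sigma$, the local contributions are compatible with the decompositions $\alpha=\beta_1+\beta_2$ of $\alpha$ into positive roots. In the simply-laced setting the coclosedness condition degenerates to exactly this (no relation $\alpha=d_1\beta_1+d_2\beta_2$ with $d_1$ or $d_2\ge 2$ can hold among equal-length roots), so the factorization holds if and only if $\Sigma$ is coclosed. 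Specializing to a witnessing $k$ for a non-coclosed $\Sigma$ then completes the only-if direction.

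For the \emph{if} direction, assume $\Sigma$ is free and coclosed and fix $k>0$. I would invoke Yoshinaga's freeness criterion, which reduces the freeness of the cone of a deformation of the Weyl arrangement to two conditions: (i) the Ziegler restriction at the hyperplane at infinity, which here is the multiarrangement $(\A_{\Phi^+},m)$ with $m(\alpha)=2k+\mathbf{1}_\Sigma(\alpha)$, is free; and (ii) the characteristic polynomial factors with exponents matching that multiarrangement. Condition (ii) is already delivered by the computation above, since coclosedness makes $\chi(\scS^k_\Sigma,t)$ split over $\Z$. Thus the entire problem collapses to proving that the near-constant multiarrangement $(\A_{\Phi^+},2k+\mathbf{1}_\Sigma)$ is free for every $k>0$.

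This multiarrangement freeness is where I expect the main obstacle to lie. My strategy is to start from the constant multiplicity $2k$, for which $(\A_{\Phi^+},2k)$ is free by Terao's theorem on Coxeter multiarrangements with constant multiplicity, and to raise the multiplicity by one on the roots of $\Sigma$, one root at a time, tracking freeness through the addition--deletion theorem for multiarrangements of Abe--Terao--Wakefield. The restriction hypotheses needed at each step are controlled by how $\alpha\in\Sigma$ meets the rank-two subsystems through it, which in the simply-laced case are of type $A_1\times A_1$ or $A_2$; coclosedness guarantees that whenever $\alpha=\beta_1+\beta_2$ the set $\Sigma$ already contains one of $\beta_1,\beta_2$, which is exactly the compatibility that keeps the intermediate exponents integral, while the freeness of $\A_\Sigma$ pins down the resulting exponent data. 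Assembling (i) and (ii) yields freeness of $\cc\scS^k_\Sigma$ for all $k>0$, that is, Shi-freeness, and closes the equivalence. The most delicate point throughout is that the same single combinatorial hypothesis, coclosedness, must simultaneously control the multiarrangement exponents and the characteristic-polynomial factorization; verifying that these two a priori different requirements coincide is the technical core, and it is precisely here that simple-lacedness cannot be dispensed with.
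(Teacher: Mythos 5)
Your overall architecture — Yoshinaga's criterion at the hyperplane at infinity, the Ziegler restriction being the near-constant multiarrangement $(\A_{\Phi^+},2k+\mathbf{1}_\Sigma)$, and coclosedness acting as a rank-two condition — is the right skeleton, and the deduction that Shi-freeness forces freeness of $\A_\Sigma$ via Theorem \ref{thm:exp} is fine. But both pillars you lean on are unsupported. In the only-if direction you propose to detect non-coclosedness through failure of the \emph{global} characteristic polynomial of $\cc\scS^k_{\Sigma}$ to split over $\Z$. Nothing forces this: Terao factorization is only a necessary condition for freeness, and a single bad rank-two subsystem need not destroy integral factorization of the full polynomial in high rank. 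The mechanism that actually works is localization: freeness passes to localizations, and a non-coclosed $\Sigma$ in type $ADE$ produces $Y\in L_2(\A_{\Phi^+})$ with $\Phi_Y$ irreducible of type $A_2$, $\Sigma_Y\ne\emptyset$ and $\Sigma_Y\cap\Delta_Y=\emptyset$, hence a flat $X=Y\cap H_{\infty}\in L_3(\cc\scS^k_{\Sigma})$ whose localization is the non-free rank-three arrangement $\cc\scS^k_{\Sigma_Y}(\Phi_Y)$ (Lemma \ref{lem:2-dim}). Your finite-field/Worpitzky count is the right tool but applied at the wrong level, and the asserted equivalence ``the count splits into linear factors iff $\Sigma$ is coclosed'' is exactly the claim that would need proof; you assume it in both directions.

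For the if direction you replace the actual hypothesis of Theorem \ref{thm:Yoshinaga's criterion} — $3$-local freeness along $H_{\infty}$, which again reduces to the rank-two statement of Lemma \ref{lem:2-dim} and is precisely where coclosedness enters in the simply-laced case — by ``the characteristic polynomial factors with matching exponents'', which is not the criterion and is again not established. Moreover, the freeness of $(\A_{\Phi^+},2k+\mathbf{1}_\Sigma)$ cannot be reached by an addition--deletion induction from the constant multiplicity $2k$: no such induction is known to go through, the intermediate exponents are not controlled, and your hypothesis that $\A_\Sigma$ is free never actually enters that argument. The missing tool is the shift isomorphism $D(\A,m)\cong D(\A,2k+m)$ of Lemma \ref{lem:shift-iso}, which converts freeness of the Ziegler restriction directly into the hypothesis that $\A_\Sigma$ is free. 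With these two repairs your outline becomes the proof of the generalization, Theorem \ref{thm:SF-characterize}, specialized to type $ADE$, where coclosed, negatively coclosed, compatible and $2$-locally simple all coincide.
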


However, the theorem above is not always true for doubly-laced root systems. 
In this paper, we complete the characterization for every root system by replacing the coclosed sets by a more general concept, the so-called \emph{Worpitzky-compatible} sets due to Ashraf-Tran-Yoshinaga \cite{ATY20}. 
The appearance of the Worpitzky-compatibility here is interesting and unexpected as this concept has original motivation from a geometric property of alcoves of root system and a lattice point counting problem seemingly unrelated to the freeness. 

\subsection{The main results}
\label{subsec:results}
To state the results formally, let us first recall the concept of compatibility. 
A connected component of $V \setminus \bigcup_{ \alpha\in \Phi^+,n \in \Z} H_{\alpha}^n$ is called an \tbf{alcove}.
The closure of an alcove is an $\ell$-simplex. 
A  \tbf{face} of a simplex is the convex hull of any subset of its vertices. 
A  \tbf{facet} of an $\ell$-simplex is a face that is an $(\ell-1)$-simplex.
By abuse of notation, when we say a face of an alcove we mean a face of its closure.
Let $A$ be an alcove. 
A \tbf{wall} of $A$ is a hyperplane that supports a facet of $A$. 
The \tbf{ceilings} of $A$ are the walls which do not pass through the origin and have the origin on the same side as $A$. 
The  \tbf{upper closure} $A^\diamondsuit$  of $A$ is the union of $A$ and its facets supported by the ceilings of $A$. 
We will often abuse notation and call $A^\diamondsuit$ an upper closed alcove (though it is not an alcove). 
Let  $P^\diamondsuit:=\{x\in V \mid 0<(\alpha_i ,x)\le 1\,(1 \le i \le \ell)\}$ be the \tbf{fundamental parallelepiped} (of the coweight lattice) of $\Phi$. 
Then $P^\diamondsuit$ has the following partition: 
$$P^\diamondsuit=\bigsqcup_{A:\, \text{alcove},\,A \subseteq P^\diamondsuit}A^\diamondsuit,
$$ 
which is known as the \tbf{Worpitzky partition}, e.g. \cite[Proposition 2.5]{Y18W}, \cite[Exercise 4.3]{H90}.

 \begin{definition}{{\cite[Definition 4.8]{ATY20}}}
 \label{def:compatibleW}   
A subset $\Sigma \subseteq \Phi^+$ is called \tbf{Worpitzky-compatible} in $\Phi$, or \tbf{compatible} for short, if for each alcove $A\subseteq P^\diamondsuit$, the intersection $A^\diamondsuit \cap H_{\alpha}^{n_\alpha}$ of its upper closure $A^\diamondsuit$ and any affine hyperplane $ H_{\alpha}^{n_\alpha}$ for $\alpha\in \Sigma,n_\alpha \in \Z$ is either empty, or contained in a ceiling $H_{\beta}^{n_\beta}$ of $A$ for some $\beta\in \Sigma,n_\beta \in \Z$. 
In short, every nonempty intersection can be lifted to a facet intersection.
\end{definition}
 
 In particular, the empty set $\emptyset$ and the positive system $\Phi^+$ itself are always compatible. 
The compatibility was originally defined in order to make a counting formula concerning the \emph{characteristic} and \emph{Ehrhart quasi-polynomials} valid (Theorem \ref{thm:CO}). 
It is proved that every coclosed subset is compatible \cite[Proof of Theorem 4.16]{ATY20} (see also  \cite[Theorem 6]{TT21}). 
Furthermore, when the root system is of type $A$, the converse of the previous fact is also true and these properties can be characterized by  \emph{cocomparability graphs}  \cite[Theorems 2 and 9]{TT21}. 
 
The first main result in this paper is a characterization of the compatibility by a root theoretic argument and a local property of the compatibility itself. 
On the one hand, the root theoretic argument demonstrates a more direct combinatorial relationship of the compatibility and coclosedness. 
On the other hand, the local property gives a key reason why the compatibility appears in the Shi-freeness characterization. 

We need a few more notations and definitions. 
For an arrangement $\A$ in $V$, denote by $L(\A)$ the \emph{intersection poset} of $\A$.
Set $L_p(\A):=\{X \in L(\A) \mid \codim(X)=p\}$ for  $0 \le p \le \ell$.
\begin{notation}\label{not:associated}
 Let $\Phi$ be an irreducible root system and let $\A:=\A_{\Phi^+}$ be the Weyl arrangement of $\Phi$. 
A subset of $\Phi$ is a \tbf{root subsystem} if it is a root system in its own right. 
If $X \in L_p(\A)$, then $\Phi_X := \Phi \cap X^{\perp}$ is a rank $p$ root subsystem (not necessarily irreducible) of $\Phi$. 
A positive system of $\Phi_X$ is taken to be $\Phi^+_X:=\Phi^+\cap\Phi_X$.
Let $\Delta_X$ be the set of simple roots of  $\Phi_X$ associated to  $\Phi^+_X$. 
For a subset $\Sigma\subseteq\Phi^+$, denote  $\Sigma_X:=\Sigma \cap \Phi_X^+$. 
We call $\Phi_X$ and $\Sigma_X$ the \textbf{localizations} of $\Phi$ and $\Sigma$ on $X$, respectively.
\end{notation}

 \begin{definition}
 \label{def:c-co}
A subset $\Sigma\subseteq\Phi^+$ is called
 \begin{enumerate}[(a)] 
\item  \tbf{negatively coclosed} if for any $\alpha \in \Sigma $ and $\beta_1,\beta_2 \in\Phi^+ $ such that $\alpha =d_1\beta_1+d_2\beta_2$ with $d_1,d_2 \in \Z_{> 0}$ and $(\beta_1,\beta_2)<0$, either $\beta_1 \in \Sigma$ or $\beta_2  \in \Sigma$,
 \item \tbf{$2$-locally compatible} if for any $X \in L_2(\A)$ such that $\Phi_X$ is irreducible, the localization $\Sigma_X=\Sigma \cap \Phi^+_X$ is compatible in $\Phi_X$,
  \item \tbf{$2$-locally simple} if for any $X \in L_2(\A)$ such that $\Phi_X$ is irreducible,  either  $\Sigma_X $ contains a simple root of $\Phi_X$ (i.e.~ $\Sigma_X \cap \Delta_X \ne \emptyset$) or $\Sigma_X=\emptyset$. 
 \end{enumerate}
\end{definition}

In the subsequent characterizations, we must distinguish some particular subsets of positive roots in a root system of type $G_2$.

 \begin{definition}
 \label{def:G2}
Given a root system $\Phi=G_2$ with $\Delta=\{\alpha_1,\alpha_2\}$ where $\alpha_2$ is the unique long simple root, define the following subsets $\Sigma\subseteq\Phi^+$:
   \begin{enumerate}[(a)] 
   \item \label{item:CO-WC} 
   $\Sigma=\{\alpha_2\} \cup S$ with $\emptyset \ne S \subseteq \{ 2\alpha_1+\alpha_2, 3\alpha_1+\alpha_2, 3\alpha_1+2\alpha_2\}$. 
   \item  \label{item:LS-CO}  
   $\Sigma=\{\alpha_1, 3\alpha_1+2\alpha_2\}\cup S$ with $S \subseteq \{  \alpha_1+\alpha_2, 2\alpha_1+\alpha_2,\}$. 
\end{enumerate}
\end{definition}

We are ready to state our first main result connecting the compatibility, a geometric property of alcoves and the negative coclosedness, a combinatorial property of roots.

\begin{theorem}
 \label{thm:WC}
 Let $\Phi$ be an irreducible root system and $\Sigma\subseteq\Phi^+$. The following are equivalent. 
   \begin{enumerate}[(1)] 
   \item $\Sigma$ is compatible. 
   \item $\Sigma$ is $2$-locally compatible.
      \item One of the following occurs:
   \begin{enumerate}[(i)] 
   \item If $\Phi \ne G_2$, $\Sigma$ is  negatively coclosed. 
   \item If $\Phi = G_2$, $\Sigma$ is  negatively coclosed, or one of the seven exceptions in Definition \ref{def:G2}(\ref{item:CO-WC}).
 \end{enumerate}
  \end{enumerate}
\end{theorem}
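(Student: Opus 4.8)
The plan is to route everything through two-dimensional localizations, proving $(1)\Leftrightarrow(2)$ via a localization lemma and $(2)\Leftrightarrow(3)$ via a finite dictionary over the irreducible rank-two types. The mechanism behind $(1)\Leftrightarrow(2)$ is the following observation. For $X\in L_2(\A)$ and $\gamma\in\Phi_X$, the face $G:=A^\diamondsuit\cap H_\gamma^{m}$ of a global alcove $A$ is a level set of $\gamma$; since $\gamma\perp X$, this face extends along $X$, so it can be contained only in ceilings whose roots also lie in $\Phi_X$---a ceiling indexed by a root outside $\Phi_X$ is transverse to $X$ and meets $G$ in strictly smaller dimension. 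Thus, for a $\Phi_X$-root $\gamma$, the lifting data in the compatibility condition is governed entirely by $\Phi_X$.

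I would package this as a localization lemma. Since the Weyl arrangement is central, $X$ is a codimension-two linear subspace with $\Phi_X=\Phi\cap X^\perp$ a rank-two subsystem of the plane $X^\perp$; the hyperplanes containing a generic affine flat parallel to $X$ are exactly the $H_\gamma^m$ with $\gamma\in\Phi_X$, so a global alcove $A$ abutting such a flat localizes to an alcove $A_X\subseteq P_X^\diamondsuit$, and $G$ localizes to $G_X=A_X^\diamondsuit\cap H_\gamma^m$. Together with the capturing principle above, this gives: $G_X$ lies in a $\Sigma_X$-ceiling of $A_X$ if and only if $G$ lies in a $\Sigma$-ceiling of $A$. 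Hence $(1)\Leftrightarrow(2)$---the forward direction by restricting a global compatibility to each $X^\perp$, and the reverse, which is the main obstacle, by the contrapositive: a global failure yields an alcove $A$, a root $\alpha\in\Sigma$, and an integer $n_\alpha$ with $G:=A^\diamondsuit\cap H_\alpha^{n_\alpha}$ nonempty but lying in no $\Sigma$-ceiling. As $A$ is open, $G$ lies on the ceiling facets of $A$; since $H_\alpha^{n_\alpha}$ is not itself a ceiling (otherwise $\alpha\in\Sigma$ would already lift $G$), the face $G$ has codimension at least two. Choosing any ceiling root $\beta\notin\Sigma$ with $G\subseteq H_\beta^{n_\beta}$ and setting $X:=H_\alpha^0\cap H_\beta^0\in L_2(\A)$, the lemma converts this into a failure of $\Sigma_X$ in $\Phi_X$. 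The reducible case $\Phi_X\cong A_1\times A_1$ is automatically excluded: there every positive root is simple and a direct check shows every subset compatible, so no local failure can exist; hence $\Phi_X$ is irreducible and $(2)$ is contradicted.

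Finally, $(2)\Leftrightarrow(3)$ is a finite verification over the irreducible rank-two types $A_2$, $B_2$, and $G_2$, using that negative coclosedness is itself rank-two local: any relation $\alpha=d_1\beta_1+d_2\beta_2$ with $(\beta_1,\beta_2)<0$ confines $\beta_1,\beta_2$ to the irreducible subsystem they span. For each type I would enumerate the alcoves of $P^\diamondsuit$, read off their ceilings, determine directly which subsets are compatible, and match against the negatively coclosed subsets. For $A_2$ and $B_2$ the two classes coincide; as these are the only irreducible rank-two subsystems occurring in any $\Phi\ne G_2$ (type $G_2$ arising as a rank-two localization solely inside $G_2$), clause $(3)$(i) follows. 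For $\Phi=G_2$ the unique codimension-two flat is the origin, so $(1)$, $(2)$, and compatibility in $G_2$ coincide, and the direct computation exhibits exactly the seven compatible-but-not-negatively-coclosed subsets of Definition~\ref{def:G2}(\ref{item:CO-WC}), yielding clause $(3)$(ii). Assembling the rank-two dictionaries with the localization lemma completes the proof.
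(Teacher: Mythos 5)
Your reduction of $(2)\Leftrightarrow(3)$ to the rank-two types is sound and matches the paper's argument. The gaps are in $(1)\Leftrightarrow(2)$, and you have misjudged where the difficulty sits. For $(1)\Rightarrow(2)$ you propose to ``restrict a global compatibility to each $X^\perp$,'' but this requires an existence statement you never supply: if $\Sigma_X$ fails to be compatible in $\Phi_X$, i.e.\ some $\beta=d_1\gamma_1+d_2\gamma_2\in\Sigma_X$ with $d_1,d_2>0$ and $\Delta_X=\{\gamma_1,\gamma_2\}$ disjoint from $\Sigma$, one must exhibit a \emph{global} alcove $A\subseteq P^\diamondsuit$ having $H_{\gamma_1}^1$ and $H_{\gamma_2}^1$ as ceilings and satisfying $A^\diamondsuit\cap H_\beta^{d_1+d_2}=A^\diamondsuit\cap H_{\gamma_1}^1\cap H_{\gamma_2}^1$, so that global compatibility can be invoked. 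Since $\gamma_1,\gamma_2$ are in general not simple in $\Phi$, there is no a priori reason such an alcove lies inside $P^\diamondsuit$; your localization lemma runs only from a global alcove to a local configuration, not back. Moreover its mechanism is flawed: the face $G=A^\diamondsuit\cap H_\gamma^{m}$ is a compact face of a simplex, it does not ``extend along $X$,'' and it can perfectly well be contained in ceilings indexed by roots outside $\Phi_X$ (take $G$ a vertex, which lies on all $\ell$ walls through it). Producing the required alcove is the bulk of the paper's proof: one defines an address map $r_X:\Phi^+\to\Z$ inductively on height and verifies Shi's inequalities $r(\gamma)+r(\gamma')-1\le r(\gamma+\gamma')\le r(\gamma)+r(\gamma')$, a genuinely delicate induction occupying several pages. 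Your sketch omits this entirely.

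The direction $(2)\Rightarrow(1)$ also does not close as you set it up. Given a non-liftable face $G=A^\diamondsuit\cap H_\alpha^{n_\alpha}=\bigcap_{i=1}^m H_{\beta_i}^{n_{\beta_i}}\cap A^\diamondsuit$ with every ceiling root $\beta_i\notin\Sigma$, taking $X:=H_\alpha^0\cap H_\beta^0$ for an \emph{arbitrary} ceiling root $\beta$ need not produce a local failure. First, $\Phi_X$ may be reducible: if $\alpha=\beta_1+\beta_2+\beta_3$ with $\{\beta_1,\beta_2,\beta_3\}$ an $A_3$-configuration and $\beta=\beta_2$ the middle node, then $(\alpha,\beta_2)=0$ and $\Phi_X\cong A_1\times A_1$, so $2$-local compatibility says nothing; you cannot dismiss this case, because it is exactly where your derived contradiction evaporates. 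Second, even when $\Phi_X$ is irreducible, the conclusion of $2$-local compatibility is only that \emph{some} simple root of $\Phi_X$ lies in $\Sigma$, and that root is $\beta$, or $\alpha-\beta$, or $\alpha-2\beta$ in the $B_2$ case; only the first is a ceiling root of $A$, so no immediate contradiction arises when one of the others is the root in $\Sigma$. One must then descend, repeating the argument with $\alpha-\beta$ (or $\alpha-2\beta$) in place of $\alpha$, terminating because heights strictly decrease. The paper avoids the reducibility trap by choosing $\beta=\beta_k$ with $\alpha-\beta_k\in\Phi^+$ (such $k$ exists by a reordering lemma for sums of roots), which forces $\Phi_X$ to be irreducible of type $A_2$ or $B_2$ with $\alpha$ non-simple in it, and then runs precisely this descent. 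Both the alcove construction and the descent are essential and missing from your proposal.
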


Our second main result is a generalization of Theorem \ref{thm:SL-CC} to any root system.

\begin{theorem}
 \label{thm:SF-characterize}
 Let $\Phi$ be an irreducible root system and $\Sigma\subseteq\Phi^+$. The following are equivalent. 
   \begin{enumerate}[(1)] 
   \item $\Sigma$ is Shi-free. 
   \item $\Sigma$ is free and $2$-locally simple.
   \item One of the following occurs:
   \begin{enumerate}[(i)] 
   \item If $\Phi \ne G_2$, $\Sigma$ is compatible and  free. 
   \item If $\Phi = G_2$, $\Sigma$ is compatible, or one of the four exceptions in Definition \ref{def:G2}(\ref{item:LS-CO}).
 \end{enumerate}
 \end{enumerate}
\end{theorem}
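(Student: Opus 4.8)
The plan is to establish the two equivalences $(2)\Leftrightarrow(3)$ and $(1)\Leftrightarrow(2)$ separately. The first is a purely combinatorial rank-two comparison, while the genuine freeness input is concentrated in $(1)\Leftrightarrow(2)$. The organizing idea is that the three conditions record three different invariants of the cone $\cc\scS^k_\Sigma$: freeness of $\Sigma$ is the structure surviving at $k=0$; compatibility governs the characteristic (quasi-)polynomial through the Worpitzky counting formula (Theorem \ref{thm:CO}); and $2$-local simplicity governs the freeness of the Ziegler restriction of $\cc\scS^k_\Sigma$ to the hyperplane at infinity. Yoshinaga's freeness criterion \cite{Yo04} then fuses the latter two, which is precisely the engine behind Theorem \ref{thm:SL-CC}; the content of the present theorem is that for $\Phi\ne G_2$ these two data sets carry identical information.

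For $(2)\Leftrightarrow(3)$ I observe that both ``$2$-locally simple'' and ``compatible'' are rank-two local conditions: the former by Definition \ref{def:c-co}, the latter because compatibility equals $2$-local compatibility by Theorem \ref{thm:WC}. It therefore suffices to compare, on each irreducible $X\in L_2(\A)$, the property ``$\Sigma_X\cap\Delta_X\ne\emptyset$ or $\Sigma_X=\emptyset$'' with ``$\Sigma_X$ is compatible in $\Phi_X$''. I would do this by hand for the three irreducible rank-two types. For $\Phi_X$ of type $A_2$ or $B_2$, inspecting the (at most four) positive roots shows that a subset is negatively coclosed precisely when it is empty or meets $\Delta_X$; with Theorem \ref{thm:WC} this yields ``$2$-locally simple $\Leftrightarrow$ compatible'' on such flats. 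Since the length ratio of a non-$G_2$ system forbids a $G_2$ subsystem, every irreducible $X\in L_2(\A)$ is of type $A_2$ or $B_2$ when $\Phi\ne G_2$, so the two global conditions agree and $(2)\Leftrightarrow(3)(i)$ follows. When $\Phi=G_2$ freeness is automatic, and I would enumerate directly: every compatible subset (negatively coclosed, or one of the seven sets in Definition \ref{def:G2}(\ref{item:CO-WC})) contains a simple root, whereas the subsets that are $2$-locally simple but not compatible are exactly the four sets in Definition \ref{def:G2}(\ref{item:LS-CO}); this gives $(2)\Leftrightarrow(3)(ii)$.

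For $(1)\Rightarrow(2)$, Shi-freeness forces freeness of $\Sigma$, as noted after Definition \ref{def:Shi+-}. To obtain $2$-local simplicity I would use that localization preserves freeness: for $X\in L_2(\A)$ with $\Phi_X$ irreducible, localizing the free arrangement $\cc\scS^k_\Sigma$ at the flat associated with $X$ produces the cone $\cc\scS^k_{\Sigma_X}(\Phi_X)$ of the rank-two subsystem, which is then free for all $k>0$, so $\Sigma_X$ is Shi-free in $\Phi_X$. The remaining ingredient is a rank-two classification lemma: for each of $A_2,B_2,G_2$, a subset is Shi-free if and only if it is $2$-locally simple. These cones are essential central $3$-arrangements whose freeness is detected by the factorization of the characteristic polynomial, so the lemma is a finite (if delicate for $G_2$) computation; its ``$\Rightarrow$'' half completes $(1)\Rightarrow(2)$, while its ``$\Leftarrow$'' half serves as the base case below.

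I expect the substance to lie in $(2)\Rightarrow(1)$, which via $(2)\Leftrightarrow(3)$ amounts to ``$\Sigma$ free and compatible $\Rightarrow$ Shi-free'' together with the $G_2$ exceptions. I would deduce freeness of $\cc\scS^k_\Sigma$ from Yoshinaga's criterion \cite{Yo04} at the hyperplane at infinity, whose Ziegler restriction is the Weyl multiarrangement $(\A_{\Phi^+},m)$ with $m(\alpha)=2k+[\alpha\in\Sigma]$. The criterion needs (a) freeness of this multiarrangement and (b) the factorization $\pi(\cc\scS^k_\Sigma,t)=(t-1)\,\pi((\A_{\Phi^+},m),t)$. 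Compatibility supplies (b): through the counting formula (Theorem \ref{thm:CO}) it forces the characteristic quasi-polynomial of $\scS^k_\Sigma$ to be an honest polynomial factoring with the multiarrangement exponents --- this is the unexpected way Worpitzky-compatibility enters the freeness problem. For (a) I would induct on $\mathrm{rank}\,\Phi$ by an addition--deletion/division argument reducing freeness of $(\A_{\Phi^+},m)$ to its proper localizations, invoking freeness of $\Sigma$ and the settled ideal case (\cite{AT16}, Theorem \ref{thm:exp}), the genuinely new input being the irreducible rank-two localizations handled by the classification lemma via $2$-local simplicity. The main obstacle is exactly step (a) in the doubly-laced setting: multiarrangement freeness is not automatically local-to-global, and the unequal root lengths break the uniformity that Yoshinaga could exploit in type $ADE$, so the inductive localization must be arranged with care; the four $G_2$ exceptions would then be dispatched by computing their cones directly.
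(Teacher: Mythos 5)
Your treatment of $(2)\Leftrightarrow(3)$ and of $(1)\Rightarrow(2)$ is sound and essentially the paper's: both conditions are rank-two local, Theorem \ref{thm:WC} reduces compatibility to $2$-local compatibility, the identification $\LC=\LS$ on $A_2$ and $B_2$ flats gives $(3)(i)$, and a direct $G_2$ enumeration gives $(3)(ii)$; for $(1)\Rightarrow(2)$, freeness of $\Sigma$ comes from Theorem \ref{thm:exp} and $2$-local simplicity from localizing the free cone at flats $Y\cap H_\infty$ and invoking the rank-two classification (this is Lemma \ref{lem:2-dim} in the paper, quoted from \cite{AT16} rather than reproved).

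The gap is in $(2)\Rightarrow(1)$, and it is twofold. First, you are missing the key Lemma \ref{lem:shift-iso} (the isomorphism $D(\A,m)\cong D(\A,2k+m)$ of \cite{AY09}), which makes freeness of the Ziegler restriction $(\A_{\Phi^+},2k+m)$ \emph{immediately} equivalent to freeness of $\A_\Sigma$, i.e.\ to the hypothesis that $\Sigma$ is free. Your substitute --- an addition--deletion/division induction on proper localizations of the multiarrangement --- is not a workable replacement: multiarrangement freeness is not local-to-global, there is no established division theorem in that generality, and you flag this yourself as ``the main obstacle.'' As written, step (a) of your argument does not close. Second, you invoke the characteristic-polynomial form of Yoshinaga's criterion and claim that compatibility supplies the factorization $\pi(\cc\scS^k_\Sigma,t)=(t-1)\,\pi((\A_{\Phi^+},m),t)$ via Theorem \ref{thm:CO}. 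But Theorem \ref{thm:CO} concerns the characteristic quasi-polynomial of the \emph{central} arrangement $\A_\Sigma$, not of the deformation $\scS^k_\Sigma$, and you give no argument for passing from one to the other, nor for why the resulting polynomial would factor with the (as yet unknown) exponents of the multiarrangement. In fact this reflects a misreading of where compatibility enters: in the paper it plays no role whatsoever in the freeness argument. The paper uses the later form of the criterion (Theorem \ref{thm:Yoshinaga's criterion}), in which the second condition is $3$-local freeness along $H_\infty$; by \cite[Lemma 3.1]{AT11} the relevant rank-$3$ flats are exactly $Y\cap H_\infty$ for $Y\in L_2(\A)$, the corresponding localizations are the cones $\cc\scS^k_{\Sigma_Y}(\Phi_Y)$, and Lemma \ref{lem:2-dim} converts their freeness into $2$-local simplicity. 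This yields $(1)\Leftrightarrow(2)$ in one stroke; compatibility appears only afterwards, through the purely combinatorial chain $\CO=\LC=\LS$ in $(2)\Leftrightarrow(3)$. Replacing your step (a) by Lemma \ref{lem:shift-iso} and your step (b) by the $3$-local-freeness condition repairs the proof.
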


We emphasize that the proofs of Theorems \ref{thm:WC} and \ref{thm:SF-characterize} require only the classification of all rank $2$ root systems ($A_1^2, A_2, B_2=C_2, G_2$), and the fact that given a root system $\Phi \ne G_2$, any rank $2$ irreducible root subsystem of $\Phi$ is of type $A_2$ or $B_2$.

  \begin{remark}
\label{rem:LC-LS}
Given a root system $\Phi$, denote by $\I$, $\CC$, $\NC$, $\CO$, $\LC$, $\LS$ the set of all ideals, coclosed, negatively coclosed, compatible, $2$-locally compatible, $2$-locally simple sets in $\Phi$, respectively. 
By the theorems above, we have the following relations between these concepts:
$$\I \subseteq \CC \subseteq \NC  \subseteq \CO= \LC \subseteq \LS.$$

For any containment relation above, there exists an example that makes it strict. 
Let us add a few more combinatorial and geometric insights.
   \begin{enumerate}[(a)] 
   \item \label{item:NC-CC} The containment $\CC \subseteq \NC$ (i.e.~every coclosed subset is negatively coclosed) is clear from definition. 
   If $\Phi$ is simply-laced, then $\CC = \NC$ since any rank $2$ irreducible root subsystem of $\Phi$ is of type $A_2$. 
   Let $\Phi=B_2$ and suppose $\Delta =\{\alpha_1,\alpha_2\}$ with the long simple root $\alpha_2$. 
Then $\Sigma = \{2\alpha_1+\alpha_2, \alpha_2\}$ is the unique subset of $\Phi^+$ such that $\Sigma\in \NC \setminus \CC$. 
The reason is that although $2\alpha_1+\alpha_2 =  \alpha_1+ (\alpha_1+\alpha_2)$, the negative coclosedness does not require  $ \alpha_1$ or $\alpha_1+\alpha_2$ to be in $\Sigma$ since these roots are orthogonal.

   \item  \label{item:A2B2}  
Let $\Phi=A_2$ or $B_2$ with $\Delta=\{\alpha_1,\alpha_2\}$. Then
$$\NC= \CO= \LC = \LS.$$
The second equality is obvious. 
The compatibility in these cases can be easily verified by two dimensional pictures. 
For a positive root $\alpha = \sum_{i=1}^\ell d_i \alpha_i\in \Phi^+$, the \tbf{height} of $\alpha$ is defined by $ {\rm ht}(\alpha) :=\sum_{i=1}^\ell  d_i$. 
Let $\{\varpi^\vee_1, \ldots ,\varpi^\vee_\ell\}$ be the dual basis of $\Delta$, namely, $(\alpha_i ,\varpi^\vee_j)=1$ if $i=j$ and $0$ otherwise.
Then $H_{\alpha}^{ {\rm ht}(\alpha)}\cap P^\diamondsuit =  \left\{ \sum_{i=1}^\ell \varpi^\vee_i\right\}$ for any $\alpha\in \Phi^+$. 
In type $A_2$ or $B_2$, 
an affine hyperplane orthogonal to a positive root intersects an upper closed alcove inside $ P^\diamondsuit$ at a non-facet intersection only if the intersection is the point $v = \varpi^\vee_1 +\varpi^\vee_2$. 
The point $v$ is  a vertex of the alcove ``furthest away" from the origin, i.e.~ the alcove with ceilings $H_{\alpha_1}^1$, $H_{\alpha_2}^1$. 
By the preceding calculation, $v$ is contained in every hyperplane of the form $H_{\alpha}^{ {\rm ht}(\alpha)}$ for $\alpha\in \Phi^+$. 
See Figure \ref{fig:A2} for an illustration in type $A_2$ (and \cite[Figure 2]{Y18W} for type $B_2$). 

Thus a subset $\Sigma\subseteq\Phi^+$ is compatible if and only  either $\Sigma$ is empty or $\Sigma$ contains a simple root, i.e.~$\Sigma $ is $2$-locally simple. 
This property also characterizes the negative coclosedness. 
Hence these concepts must be the same when $\Phi=A_2$ or $B_2$. 

\item In general, given a subset $\Sigma\subseteq\Phi^+$, it is very difficult to check whether $\Sigma$ is compatible or not by using the primary definition of compatibility or the characterization by quasi-polynomials in Theorem \ref{thm:CO}. 
The characterization of the compatibility by negative coclosedness in our Theorem \ref{thm:WC}, however, gives a very simple and effective way to do so. 
See \ref{ex:F4} for an example. 
 \end{enumerate}
\end{remark}

\section{Preliminaries}
\label{sec:free-arr}

\subsection{Free arrangements}
\label{subsec:free}
We begin by recalling some basic concepts and preliminary results of free arrangements. Our standard reference is \cite{OT92}.
Let $ \mathbb{K} $ be a field and let $V=  \mathbb{K}^{\ell} $. 
A \textbf{hyperplane} in $V$ is an affine subspace of codimension $1$ of $V$.
An \textbf{arrangement} is a finite collection of hyperplanes in  $ V $. 
An arrangement is called \textbf{central} if every hyperplane in it passes through the origin. 

Let  $ \mathcal{A} $ be an arrangement.
Define the \textbf{intersection poset} $ L(\mathcal{A}) $ of $ \mathcal{A} $ by 
\begin{align*}
L(\mathcal{A})  :=\left\{\bigcap_{H \in \mathcal{B}}H \neq \emptyset\,\middle\vert\,  \mathcal{B} \subseteq \mathcal{A} \right\},
\end{align*}
where the partial order is given by reverse inclusion: $X\le Y$ if  $Y\subseteq X$ for $X, Y \in L(\A)$. 
We agree that $ V $ is a unique minimal element in $ L(\mathcal{A}) $ as the intersection over the empty set.

For each $X \in L(\A)$, define the \textbf{localization} of $\A$ on $X$  by 
$${\A}_X  :=\{ K \in {\A} \mid X \subseteq K\} \subseteq \A,$$
and the \textbf{restriction} ${\A}^{X}$ of ${\A}$ to $X$ by 
$${\A}^{X}  :=\{ K \cap X  \ne \emptyset \mid K \in{\A }\setminus {\A}_X\}.$$

Let $ \{x_{1}, \dots, x_{\ell}\} $ be a basis for the dual space $V^{\ast} $ and let $ S  :=\mathbb{K}[x_{1}, \dots, x_{\ell}] $. 
The \textbf{defining polynomial} $Q(\A)$ of $\A$ is given by
$$Q(\A) :=\prod_{H \in \A} \alpha_H \in S,$$
where $ \alpha_H=a_1x_1+\cdots+a_\ell x_\ell+d$  $(a_i, d \in \mathbb{K})$ satisfies $H = \ker \alpha_H$.

The \textbf{cone} $ \textbf{c}\A$ of $\A$ is the central arrangement in $\mathbb{K}^{\ell+1}$ with the defining polynomial
$$Q(\textbf{c}\A) :=z\prod_{H \in \A} {}^h\alpha_H \in \mathbb{K}[x_1,\ldots, x_\ell,z],$$
where $ {}^h\alpha_H :=a_1x_1+\cdots+a_\ell x_\ell+dz$ is the homogenization of $\alpha_H$, and $z=0$ is the  hyperplane at infinity. 

A $\K$-linear map $\theta:S\longrightarrow S$ which satisfies $\theta(fg) = \theta(f)g + f\theta(g)$ is called a \tbf{derivation}.
Let $\Der(S)$ be the set of all derivations of $S$. 
It is a free $S$-module with a basis $\{\partial/\partial x_1,\ldots,\partial/\partial x_{\ell}\}$ consisting of the usual partial derivatives.
 
\begin{definition}{{\cite[Definitions 4.5 and 4.15]{OT92}}}
\label{def:free-arr}
Let $\A$ be a central arrangement in  $V=\K^\ell$. 
The \textbf{module $D(\mathcal{A}) $ of logarithmic derivations}  is defined by 
	\begin{equation*}
	D(\A)  := \{ \theta \in \Der(S) \mid \theta(\alpha_H) \in \alpha_HS \mbox{ for all } H \in \A\}.
	\end{equation*}
	We say that $\A$ is \tbf{free} if the module $D(\A)$ is a free $S$-module. 
\end{definition}

The freeness can be extended to a more general class of  \emph{multiarrangements}.
A \tbf{multiarrangement} is a pair $(\A, m)$ where $\A$ is a central arrangement in $\K^\ell$ and $m$ is a map $m : \A \longrightarrow \Z_{\ge0}$, called \tbf{multiplicity}. 
Let $(\A, m)$ be a multiarrangement.
The defining polynomial $Q(\A,m)$ of $(\A, m)$ is given by 
$$Q(\A,m) := \prod_{H \in \A} \alpha^{m(H)}_H \in S.$$
When $m(H) = 1$ for every $H \in \A$, $(\A,m)$ is simply a hyperplane arrangement. 
The \tbf{module $D(\A,m)$ of logarithmic derivations} of  $(\A, m)$ is defined by
$$D(\A,m) :=  \{ \theta\in \Der(S) \mid \theta(\alpha_H) \in \alpha^{m(H)}_HS \mbox{ for all } H \in \A\}.$$
 
We say that $(\A, m)$  is \tbf{free}  if $D(\A,m)$ is a free $S$-module. 
It is known that  $(\A, m)$ is always free for $\ell\le 2$ \cite[Corollary 7]{Z89}.

Let $H \in \A$. The \tbf{Ziegler restriction} $(\A^H,m^H)$ of $\A$ onto $H$ is a multiarrangement defined by 
$$m^H(X):= |\A_X|-1 \quad \mbox{for } X \in \A^H.$$ 
We say that $\A$ is \tbf{$3$-locally free along $H$} if the localization $\A_X$ is free for all $X\in L_3(\A)$ with $X\subseteq H$.

\begin{theorem}{{\cite[Theorem 2.2]{Yo04}}, {\cite[Theorem 4.1]{AY13}}, {\cite[Theorem 11]{Z89}}}
\label{thm:Yoshinaga's criterion}
Let $\A$ be a central arrangement in $\K^\ell$ with $\ell \ge 3$ and let $H\in \A$. Then
$\A$ is free if and only if the Ziegler restriction  $(\A^H,m^H)$ is free and $\A$ is  $3$-locally free along $H$.
\end{theorem}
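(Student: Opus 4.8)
The plan is to prove the two implications separately: the forward direction assembles known facts, while the reverse direction carries the main weight and is where Yoshinaga's insight is needed.

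For the ``only if'' direction, suppose $\A$ is free with $\exp(\A)=(1,e_2,\dots,e_\ell)$, the exponent $1$ coming from the Euler derivation $\theta_E=\sum_i x_i\,\partial/\partial x_i$, which always lies in $D(\A)$ since $\A$ is central. I would invoke Ziegler's restriction theorem \cite{Z89} to conclude that the Ziegler restriction $(\A^H,m^H)$ is free with $\exp(\A^H,m^H)=(e_2,\dots,e_\ell)$. The $3$-local freeness along $H$ is then immediate from the standard fact that every localization $\A_X$ of a free arrangement is again free, because $D(\A_X)$ is recovered from $D(\A)$ by localizing at the prime defining $X$; in particular $\A_X$ is free for all $X\in L_3(\A)$ with $X\subseteq H$.

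For the ``if'' direction I would pass to projective geometry. The module $D(\A)$ is a reflexive graded $S$-module of rank $\ell$, so it determines a reflexive sheaf $\E$ on $\PP^{\ell-1}$, and freeness of $\A$ is equivalent to $\E$ splitting as a direct sum of line bundles $\bigoplus_{i=1}^{\ell}\mathcal O_{\PP^{\ell-1}}(-e_i)$ (with $\theta_E$ contributing the $\mathcal O(-1)$ summand) together with the vanishing of intermediate cohomology. The first step is to show $\E$ is locally free in a neighborhood of the hyperplane $\PP(H)\cong\PP^{\ell-2}$. A reflexive sheaf on a smooth variety is automatically locally free away from a locus of codimension $\ge 3$, and the $3$-local freeness hypothesis is exactly what removes the remaining non-locally-free points meeting $\PP(H)$: freeness of $\A_X$ for each codimension-$3$ flat $X\subseteq H$ forces $\E$ to be locally free at the corresponding point of $\PP(H)$. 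Next, freeness of $(\A^H,m^H)$ means, via Saito's criterion for multiarrangements, that the restriction $\E|_{\PP(H)}$ splits as $\bigoplus_i \mathcal O_{\PP(H)}(-e_i)$, and I would analyze the restriction sequence
\[
0 \longrightarrow \E(-1) \longrightarrow \E \longrightarrow \E|_{\PP(H)} \longrightarrow 0
\]
to lift this splitting. The key is that the natural restriction map $D(\A)\to D(\A^H,m^H)$ is \emph{surjective}: its cokernel is supported on the non-locally-free locus, which by the previous step is empty along $\PP(H)$, so a depth argument forces the cokernel to vanish. Surjectivity lets one lift a free basis $\theta_2,\dots,\theta_\ell$ of $(\A^H,m^H)$ to derivations in $D(\A)$; together with $\theta_E$ these $\ell$ derivations satisfy $\det(\theta_i(x_j))\doteq Q(\A)$ up to a nonzero scalar, so Saito's criterion certifies that $D(\A)$ is free.

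The main obstacle I anticipate is exactly this surjectivity/lifting step: showing that the codimension-$3$ local freeness hypothesis, a purely combinatorial-geometric condition on small flats, is strong enough to annihilate the cohomological obstruction to lifting the Ziegler basis. This is where Yoshinaga's original argument \cite{Yo04} and its refinement \cite{AY13} enter; the cleanest route is to express the obstruction as the vanishing of a first local-cohomology (equivalently $\mathrm{Ext}^1$) group of the cokernel sheaf, controlled simultaneously by the equality of exponents forced by Ziegler's theorem and by the local freeness of $\E$ along $\PP(H)$. Once that vanishing is secured, the Saito determinant computation is routine and the freeness of $\A$ follows.
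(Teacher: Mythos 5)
This statement is quoted background: the paper cites \cite{Z89}, \cite{Yo04}, \cite{AY13} and gives no proof of its own, so there is nothing in the paper to compare your argument against; I can only assess your sketch against the arguments in those sources. Your ``only if'' direction is correct and standard (Ziegler's restriction theorem plus the fact that localizations of free arrangements are free), and your overall plan for the converse --- sheafify $D(\A)$, restrict to $\PP(H)$, and reduce everything to surjectivity of the Ziegler restriction map $D(\A)\to D(\A^H,m^H)$ --- is indeed the skeleton of the Yoshinaga/Abe--Yoshinaga proof.

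There is, however, a genuine gap at the step you yourself flag as the crux. You claim that $3$-local freeness along $H$ makes $\E$ locally free at every point of $\PP(H)$, so that the cokernel of the restriction map has empty support there. That does not follow. The non-locally-free locus of $\E$ is the union of the $\PP(X)$ over flats $X$ with $\A_X$ not free; reflexivity only pushes this locus into codimension $\ge 3$ in $\PP^{\ell-1}$, and the hypothesis only kills the components coming from $X\in L_3(\A)$ with $X\subseteq H$. Flats of codimension $4,5,\dots$ contained in $H$ are completely uncontrolled, and the corresponding strata of $\PP(H)$ may well be singular points of $\E$. So your depth argument, as stated, proves the theorem only under the much stronger hypothesis that $\A_X$ is free for \emph{all} $X\subseteq H$. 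The way \cite{AY13} closes exactly this gap is to replace ``locally free along $\PP(H)$'' by a numerical invariant: one shows $b_2(\A)\ge b_2(\A^H,m^H)+|\A|-1$ with equality if and only if $\A_X$ is free for every $X\in L_3(\A)$ with $X\subseteq H$ (the point being that $b_2$ is determined by rank-$2$ localizations of the Ziegler restriction, i.e.\ by codimension-$3$ flats of $\A$ inside $H$, so higher-codimension flats are irrelevant), and then proves that this equality of second Betti numbers, together with freeness of $(\A^H,m^H)$, forces surjectivity of the Ziegler restriction map via a Hilbert-series/local-cohomology comparison. Once surjectivity is in hand, your lifting and Saito-criterion bookkeeping is fine (the degrees do add up, since $\sum_{X\in\A^H}m^H(X)=|\A|-1$). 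So the missing idea is precisely the reduction of the global lifting obstruction to a codimension-$2$ numerical condition; without it the $3$-local hypothesis is not doing the work you assign to it.
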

\subsection{Characteristic quasi-polynomials and Worpitzky-compatibility}
\label{subsec:quasi}
Next we recall the definition of the \emph{characteristic quasi-polynomial} of an integral arrangement following \cite{KTT08, KTT11}. 
The main motivation in \cite{ATY20} for defining the Worpitzky-compatibility is to study this quasi-polynomial for arrangements arising from root systems. 

A function $\varphi: \Z \longrightarrow \C$ is called a \tbf{quasi-polynomial} if there exist a positive integer $\rho\in\Z_{>0}$ and polynomials $f^k(t)\in\mathbb{Q}[t]$ ($1 \le k \le \rho$) such that for any $q\in\Z_{>0}$ with  $q\equiv k\bmod \rho$, 
\begin{equation*}
\varphi(q) =f^k(q).
\end{equation*}
The number $\rho$ is called a \tbf{period}, and the polynomial $f^k(t)$ is called the \tbf{$k$-constituent} of the quasi-polynomial $\varphi$.

Let $\ell,n\in\Z_{>0}$ be positive integers. 
Denote by $\Mat_{\ell\times n}(\Z)$ the set of all $\ell\times n$ matrices with integer entries. 
Let $C=(c_1,\ldots,c_n) \in \Mat_{\ell\times n}(\Z)$ with no zero columns and let $b=(b_1,\ldots,b_n) \in\Z^n$. 
Set $A  := \begin{pmatrix} C\\ b\end{pmatrix}  \in \Mat_{(\ell + 1)\times n}(\Z)$. 
The matrix $A$ defines the following hyperplane arrangement in $\R^\ell$, called \tbf{integral} arrangement 
$$\A=\A(A) := \{ H_j \mid 1\le j\le n\},
\,\mbox{ where }
H_j  := \{x\in \R^\ell\mid xc_j=b_j\}.$$

Let $q\in\Z_{>0}$ and $\Z_q  :=\Z/q\Z$. 
 For $a \in \Z$, let $\overline{a}  := a + q\Z \in \Z_q$ denote the $q$-reduction of $a$. 
For a matrix or vector $A'$ with integral entries, denote by $\overline{A'}$ the entry-wise $q$-reduction of $A'$.
The \tbf{$q$-reduction} $\A_q$ of $\A$ is defined by 
$$\A_q := \{ H_{j,q} \mid 1\le j\le n\},
\,\mbox{ where }
H_{j,q}  := \{ z \in \Z_q^\ell \mid z\overline{c_j} =\overline{b_j}\}.$$

Denote $\Z_q^{\times} :=\Z_q \setminus \{\overline0\}$.
The \tbf{complement} $\M(\A_q)$ of $\A_q$ is defined by 
$$\M(\A_q)  := \Z_q^\ell \setminus\bigcup_{j=1}^n H_{j,q} = \{z\in \Z_q^\ell \mid z\overline{C} -\overline{b} \in (\Z_q^{\times})^n\}.$$

\begin{theorem}{{\cite[Theorem 2.4]{KTT08}, \cite[Theorem 3.1]{KTT11}}}
\label{thm:KTT}
There exists a monic quasi-polynomial $\chi^{\quasi}_{\A}(q)$ of degree $\ell$ such that  for sufficiently large $q$,
$$|\M(\A_q) | = \chi^{\quasi}_{\A}(q).$$ 
This quasi-polynomial is called the \tbf{characteristic quasi-polynomial} of $\A$. 
\end{theorem}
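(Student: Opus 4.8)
The plan is to compute $|\M(\A_q)|$ by inclusion--exclusion over the hyperplanes and then to show that each resulting term is, for large $q$, a quasi-polynomial in $q$. For $J \subseteq \{1,\ldots,n\}$ let $C_J$ denote the submatrix of $C$ formed by the columns $c_j$ with $j \in J$, and let $b_J$ be the corresponding subvector of $b$. Since $\M(\A_q)$ is the set of $z \in \Z_q^\ell$ avoiding every $H_{j,q}$, inclusion--exclusion gives
$$|\M(\A_q)| = \sum_{J \subseteq \{1,\ldots,n\}} (-1)^{|J|}\, N_J(q), \qquad N_J(q) := \bigl|\{z \in \Z_q^\ell \mid z\,\overline{C_J} = \overline{b_J}\}\bigr|,$$
so it suffices to understand each solution count $N_J(q)$ as a function of $q$.

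First I would evaluate $N_J(q)$ via the Smith normal form of $C_J$. Writing $C_J = U D W$ with $U \in \mathrm{GL}_\ell(\Z)$, $W \in \mathrm{GL}_{|J|}(\Z)$ and $D$ diagonal carrying the elementary divisors $e_1 \mid \cdots \mid e_{r}$ (where $r = \rk C_J$), the unimodular substitution $z \mapsto zU$ is a bijection of $\Z_q^\ell$ and decouples the system into $r$ scalar congruences $e_i z_i' \equiv w_i \pmod q$, with $w = b_J W^{-1}$ integral, together with the consistency conditions $w_i \equiv 0$ coming from the zero columns of $D$. Each scalar congruence has exactly $\gcd(e_i, q)$ solutions when it is solvable and none otherwise, while the $\ell - r$ free coordinates each contribute a factor $q$. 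Hence, once $q$ is large enough that the solvability of $z\,\overline{C_J}=\overline{b_J}$ stabilizes, $N_J(q) = q^{\ell - r}\prod_{i=1}^{r}\gcd(e_i, q)$ in the central case $b=0$, and in general differs from this only by a periodic indicator encoding that solvability. In either case $N_J(q)$ is a quasi-polynomial whose period divides $\lcm(e_1, \ldots, e_r)$, since $q \mapsto \gcd(e_i, q)$ is periodic with period $e_i$.

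Summing the finitely many terms then exhibits $|\M(\A_q)|$ as a quasi-polynomial $\chi^{\quasi}_{\A}(q)$ whose period divides the least common multiple of all elementary divisors of all submatrices $C_J$. For the degree and the leading coefficient I would isolate the term $J = \emptyset$, which contributes $N_\emptyset(q) = |\Z_q^\ell| = q^\ell$. Every term with $J \neq \emptyset$ has rank $r \ge 1$ (using the hypothesis that $C$ has no zero column, so a single hyperplane already forces the rank to be positive), hence is of the shape $q^{\ell - r}\cdot(\text{bounded periodic factor}) = O(q^{\ell - 1})$. Therefore no cancellation can occur at top degree, the leading term of every constituent is exactly $q^\ell$, and $\chi^{\quasi}_{\A}(q)$ is monic of degree $\ell$.

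The main obstacle I anticipate is the non-central case $b \neq 0$: there the solvability of the subsystem $z\,\overline{C_J} = \overline{b_J}$ depends on divisibility relations among $q$, the elementary divisors $e_i$, and the entries of $b_J$, so one must verify that these consistency conditions are themselves eventually periodic in $q$ and contribute only to the period, never to the top-degree term. Establishing this --- essentially a careful bookkeeping of when each congruence $e_i z_i' \equiv w_i \pmod q$ becomes solvable as $q$ grows, and why this stabilizes for large $q$ --- is the delicate point that forces the hypothesis ``for sufficiently large $q$''. Once it is in place, monicity and the degree count follow immediately from the $J = \emptyset$ term dominating.
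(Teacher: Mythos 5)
Your argument is correct, and it is essentially the proof given in the cited references \cite{KTT08, KTT11} (the paper itself states Theorem \ref{thm:KTT} without proof): inclusion--exclusion over column subsets $J$, evaluation of each count $N_J(q)$ via the Smith normal form so that the period divides the lcm of the elementary divisors of the submatrices $C_J$, and monicity from the $J=\emptyset$ term dominating since every nonempty $J$ has $\rk C_J\ge 1$. The one delicate point you flag, the eventual periodicity of the solvability conditions in the non-central case, is handled exactly as you describe, so there is no gap.
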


The name ``characteristic quasi-polynomial" is made by inspiration of the fact that the $1$-constituent of $\chi^{\quasi}_{\A}(q)$ coincides with the \emph{characteristic polynomial} (of the intersection poset) of $\A$ \cite[Remark 3.3]{KTT11}.

Now  let $\Phi$ be an irreducible  root system in $\R^\ell$, with a fixed set of simple roots $\Delta= \{\alpha_1,\ldots,\alpha_\ell \}$ and the associated positive system $\Phi^+ \subseteq \Phi$. 
For a subset $\Sigma\subseteq\Phi^+$, let $C_\Sigma$ be the \emph{coefficient matrix} of $\Sigma$ with respect to $\Delta$, i.e.~$C_\Sigma = (C_{ij})$ is the $\ell \times |\Sigma|$ integral matrix that satisfies 
$$\Sigma = \left\{\sum_{i=1}^\ell C_{ij}\alpha_i \,\middle\vert\ 1 \le j \le |\Sigma|  \right\}.$$
The central integral arrangement $\A(C_\Sigma)$ defined by  $C_\Sigma$ is \emph{linearly equivalent} to the Weyl subarrangement $\A_\Sigma$ defined by $\Sigma$. 
Here the linear equivalence means there exists  an invertible endomorphism of $\R^\ell$ that maps the hyperplanes of one onto the hyperplanes of the other. 

It is natural to ask for which $\Sigma\subseteq\Phi^+$ the characteristic quasi-polynomial $\chi^{\quasi}_{\Sigma}(q): =\chi^{\quasi}_{\A(C_\Sigma)}(q)$ can be computed by means of some invariants of $\Phi$. 
The study in \cite{ATY20} shows that a Worpitzky-compatible subset $\Sigma$ is such an example. 

Let $\tilde{\alpha} \in \Phi^+$ denote the highest root, i.e.~the unique maximal element in the root poset $(\Phi^+, \ge)$. 
Then $\tilde{\alpha}$ can be written uniquely as $\tilde{\alpha}= \sum_{i=1}^\ell c_i\alpha_i$  with all $c_i \in  \Z_{>0}$. 
Denote $\alpha_0 := -\tilde{\alpha}$ and $c_0:=1$. 
The sum $\h:= \sum_{i=0}^\ell c_i$ is called the \tbf{Coxeter number} of $\Phi$.
Recall that $\{\varpi^\vee_1, \ldots ,\varpi^\vee_\ell\}$ denotes the dual basis of $\Delta$. 
The \tbf{coweight lattice} of $\Phi$ is defined by $Z(\Phi^\vee):=\bigoplus_{i=1}^\ell \Z\varpi^\vee_i  \simeq \Z^\ell$.  
Then the  fundamental parallelepiped of  the coweight lattice is given by  
$$P^\diamondsuit = \sum_{i=1}^\ell (0,1]_\R \varpi^\vee_i.$$ 
Let $A^\circ$ be the \tbf{fundamental alcove} of $\Phi$, then its closure $\overline{A^\circ}=\mathrm{conv} \left\{0, \frac{\varpi^\vee_1}{c_1},\ldots, \frac{\varpi^\vee_\ell}{c_\ell}\right\} \subseteq  \overline{P^\diamondsuit}$ can be regarded as a rational polytope in $\bigoplus_{i=1}^\ell \R\varpi^\vee_i \simeq \R^\ell$. 
The counting function 
$${\rm L}_{\overline{A^\circ}}(q) := |q\overline{A^\circ} \cap Z(\Phi^\vee) |$$ 
is a quasi-polynomial in $q$, which is known as the \tbf{Ehrhart quasi-polynomial} of $\overline{A^\circ} $ w.r.t.~the  coweight lattice. 

Let $W$ be the \emph{Weyl group} of $\Phi$. 
For $\Sigma \subseteq \Phi^+$,  set $\Sigma^c:=\Phi^+ \setminus \Sigma$. 
The \tbf{descent $\dsc_\Sigma$ w.r.t.~$\Sigma$}  is a function $\dsc_\Sigma: W \longrightarrow \Z_{\ge0}$ defined by
$$
\dsc_\Sigma(w) := \sum_{0 \le i \le \ell,\, w(\alpha_i)\in -\Sigma^c}c_i.
$$
Let $f$  be the \tbf{index of connection} of $\Phi$. 
The \tbf{$\A$-Eulerian polynomial} $E_\Sigma(t)$ of $\Sigma$ is defined by
$$E_\Sigma(t):= \frac1f \sum_{w\in W}t^{\h-\dsc_\Sigma(w)}.$$
It is proved in \cite[Theorem 4.7]{ATY20} that $E_\Sigma(t)$ is a polynomial with all positive integer coefficients.

\begin{theorem}{\cite[Theorems 4.11 and 4.24]{ATY20}}
 \label{thm:CO}
 Let $\Phi$ be an irreducible root system and $\Sigma\subseteq\Phi^+$. 
 Suppose $E_\Sigma(t) = \sum_{i=0}^na_i t^i$.
 The following are equivalent. 
   \begin{enumerate}[(1)] 
   \item $\Sigma$ is compatible. 
   \item For every $q\in\Z_{>0}$, 
   $$\chi^{\quasi}_{\Sigma}(q) = \sum_{i=0}^n a_i {\rm L}_{\overline{A^\circ}}(q-i).$$
   \item The generating function of $\chi^{\quasi}_{\Sigma}(q)$ is given by
   $$ \sum_{q \ge 1}\chi^{\quasi}_{\Sigma}(q)t^q = \frac{E_\Sigma(t)}{\prod_{i=0}^\ell (1-t^{c_i}) }.$$
 \end{enumerate}
\end{theorem}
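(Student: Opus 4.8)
The plan is to dispose of the formal equivalence (2) $\Leftrightarrow$ (3) first and then concentrate all effort on linking the geometric condition (1) to the counting identity (2), which is where the content lies. For (2) $\Leftrightarrow$ (3) I would record the Ehrhart series of the closed fundamental alcove with respect to the coweight lattice,
\begin{equation*}
\sum_{q \ge 0} {\rm L}_{\overline{A^\circ}}(q)\,t^q = \frac{1}{\prod_{i=0}^\ell (1-t^{c_i})},
\end{equation*}
which one checks at once in rank one (where it reads $\sum_q (q+1)t^q = (1-t)^{-2}$) and in general from the simplicial description $\overline{A^\circ}=\mathrm{conv}\{0,\varpi^\vee_1/c_1,\dots,\varpi^\vee_\ell/c_\ell\}$ together with the standard fundamental-domain picture of the affine Weyl group. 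Granting this, the shift $q \mapsto q-i$ corresponds to multiplication by $t^i$, so summing the right-hand side of (2) against the coefficients $a_i$ of $E_\Sigma(t)=\sum_i a_i t^i$ turns the convolution in (2) into the rational expression in (3); thus (2) and (3) are literally the same assertion.

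For the heart of the matter I would interpret $\chi^{\quasi}_\Sigma(q)$ as the number of classes $\overline v \in Z(\Phi^\vee)/qZ(\Phi^\vee)$ with $(\alpha,v)\not\equiv 0 \pmod q$ for every $\alpha \in \Sigma$. Choosing representatives in the dilated parallelepiped $qP^\diamondsuit$, and noting that $(\alpha,v)\not\equiv 0\pmod q$ means exactly that $v$ avoids the hyperplanes $H_\alpha^{qn}$ $(n\in\Z)$ — whose unscaled counterparts $H_\alpha^n$ are precisely those appearing in Definition \ref{def:compatibleW} — I would apply the scaled Worpitzky partition $qP^\diamondsuit=\bigsqcup_{A\subseteq P^\diamondsuit} qA^\diamondsuit$ to write
\begin{equation*}
\chi^{\quasi}_\Sigma(q)=\sum_{A\subseteq P^\diamondsuit} N_\Sigma(A,q),\qquad N_\Sigma(A,q):=\#\{\text{coweight points of }qA^\diamondsuit\text{ avoiding every }\alpha\in\Sigma\}.
\end{equation*}
Alongside $N_\Sigma(A,q)$ I would introduce the coarser count $\tilde N_\Sigma(A,q)$ of coweight points of $qA^\diamondsuit$ lying on none of the ceilings of $A$ that are labelled by a root of $\Sigma$. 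The key local identity to establish is
\begin{equation*}
\tilde N_\Sigma(A,q)={\rm L}_{\overline{A^\circ}}\bigl(q-(\h-\dsc_\Sigma(w_A))\bigr),\qquad A=w_AA^\circ:
\end{equation*}
deleting the $\Sigma$-ceilings of $A^\diamondsuit$ yields a region affinely equivalent to the fundamental upper-closed alcove with the dilation parameter lowered, and, writing the walls of $A$ as the images $w_A(\alpha_i)$ of the affine simple roots, each deleted ceiling lowers the parameter by the Coxeter weight $c_i$ exactly according to whether $w_A(\alpha_i)\in -\Sigma^c$. Summing over all alcoves of $P^\diamondsuit$ and regrouping by the value $\h-\dsc_\Sigma(w_A)$ reproduces $\sum_i a_i {\rm L}_{\overline{A^\circ}}(q-i)$; the normalisation $1/f$ in the definition of $E_\Sigma$ enters because the alcoves of $P^\diamondsuit$ are identified with $W$ modulo the fundamental group $\Omega$ of order $f$ acting through automorphisms of the extended Dynkin diagram, under which both the weights $c_i$ and the statistic $\dsc_\Sigma$ are invariant.

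With the local identity in hand, both implications fall out of a single comparison. Since avoiding every $\Sigma$-hyperplane is more restrictive than merely avoiding the $\Sigma$-ceilings, one always has $N_\Sigma(A,q)\le \tilde N_\Sigma(A,q)$, with equality for a given $A$ precisely when no hyperplane $H_\alpha^{n_\alpha}$ with $\alpha\in\Sigma$ meets $A^\diamondsuit$ off its $\Sigma$-ceilings — that is, precisely when the compatibility of Definition \ref{def:compatibleW} holds at $A$. Hence $\chi^{\quasi}_\Sigma(q)=\sum_A N_\Sigma(A,q)\le \sum_A \tilde N_\Sigma(A,q)=\sum_i a_i{\rm L}_{\overline{A^\circ}}(q-i)$, with equality for all $q$ iff compatibility holds at every alcove. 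If $\Sigma$ is compatible this yields (2); conversely, if $\Sigma$ fails to be compatible there is an alcove $A$ and a root $\alpha\in\Sigma$ whose hyperplane meets $A^\diamondsuit$ in a positive-dimensional piece off the $\Sigma$-ceilings, and for all large $q$ the dilate of that piece carries coweight points (its relative volume being positive), making the inequality strict and so violating (2).

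The main obstacle is the local Ehrhart–descent identity for $\tilde N_\Sigma(A,q)$. It requires the affine Weyl group bookkeeping that translates each ceiling of $A=w_AA^\circ$ into an affine simple root via $w_A$, the verification that excising the $\Sigma$-ceilings lowers the Ehrhart dilation by exactly $\h-\dsc_\Sigma(w_A)$ rather than by a naive count of deleted facets (the weights $c_i$ are essential here), and the confirmation that the fundamental-group action supplies precisely the factor $1/f$. Everything else — the generating-function equivalence and the two implications — is then formal once this local statement and the elementary inequality $N_\Sigma\le\tilde N_\Sigma$ are available.
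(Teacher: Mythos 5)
Note first that the paper itself gives no proof of Theorem \ref{thm:CO}: it is imported verbatim from \cite[Theorems 4.11 and 4.24]{ATY20}, so there is no internal argument to compare against, and your outline does reproduce the architecture of the argument in that reference (the Ehrhart series $\sum_{q\ge 0}{\rm L}_{\overline{A^\circ}}(q)t^q=\prod_{i=0}^{\ell}(1-t^{c_i})^{-1}$ for $(2)\Leftrightarrow(3)$; the identification of $\chi^{\quasi}_{\Sigma}(q)$ with a count of coweight points in $qP^\diamondsuit$; the Worpitzky partition; the comparison $N_\Sigma(A,q)\le \tilde{N}_\Sigma(A,q)$ with equality at $A$ exactly when compatibility holds at $A$). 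The first genuine gap is the one you flag yourself: the local identity $\tilde{N}_\Sigma(A,q)={\rm L}_{\overline{A^\circ}}\bigl(q-(\h-\dsc_\Sigma(w_A))\bigr)$ is asserted, not proved, and it carries essentially all of the mathematical content of the theorem. It requires showing that deleting from the closed alcove the facets indexed by a subset $J$ of the affine simple roots lowers the Ehrhart parameter by exactly $\sum_{j\in J}c_j$ (already for $J$ everything this is the nontrivial functional equation ${\rm L}_{A^\circ}(q)={\rm L}_{\overline{A^\circ}}(q-\h)$), the matching of floors and ceilings of $A=w_AA^\circ$ with the sign conditions $w_A(\alpha_i)\in-\Sigma^c$, and the invariance of $\dsc_\Sigma$ under the order-$f$ fundamental group that produces the $1/f$. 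Reducing the theorem to this unproved statement is not yet a proof.

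The second gap is an actual error in your converse $(2)\Rightarrow(1)$. You argue that if compatibility fails, some $H_\alpha^{n}$ with $\alpha\in\Sigma$ meets $A^\diamondsuit$ off the $\Sigma$-ceilings in a \emph{positive-dimensional} piece, whose positive relative volume forces coweight points into its dilates. But in every rank-$2$ root system a non-facet intersection of a line with an upper closed alcove is a single vertex; the paper's own basic counterexample, $\Phi=A_2$ with $\Sigma=\{\alpha_1+\alpha_2\}$, fails compatibility only at the point $\varpi^\vee_1+\varpi^\vee_2$ (Remark \ref{rem:LC-LS}, Example \ref{ex:A2}, where the discrepancy is the constant $-1$), and the same happens in $G_2$. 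Your volume argument says nothing in these cases. The repair is to use that the offending set is (contained in) a face $F$ of the rational simplex $\overline{A}$ not contained in any $\Sigma$-ceiling; each $\Sigma$-ceiling then cuts $F$ in a proper face, so the relative interior of $F$ is a nonempty rational set disjoint from all $\Sigma$-ceilings, and any rational point $x$ there has $qx\in Z(\Phi^\vee)$ for infinitely many $q$, giving $N_\Sigma(A,q)<\tilde{N}_\Sigma(A,q)$ and violating $(2)$ for those $q$. With that substitution, and a proof of the local identity, your plan would go through.
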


\begin{example}
\label{ex:A2}
Let $\Phi=A_2$ with $\Delta=\{\alpha_1,\alpha_2\}$ depicted in  Figure \ref{fig:A2}. 
The Worpitzky partition of $P^\diamondsuit$ induces a partition of the $q$-dilation $qP^\diamondsuit\cap Z(\Phi^\vee)$ intersected with the coweight lattice. 
 Let  $\Sigma_0=\emptyset$, $\Sigma_1=\{\alpha_1, \alpha_1+\alpha_2\}$ and $\Sigma_2=\{\alpha_1+\alpha_2\}$. 
 The empty set is always compatible. 
 By Remark \ref{rem:LC-LS}(\ref{item:A2B2}), $\Sigma_1$ is compatible while  $\Sigma_2$ is not. 
By definition,  for all $q >0$
\begin{align*}
 \chi^{\quasi}_{\Sigma_0}(q) &= \left| \Z_q^2 \right|  =q^2,\\
 \chi^{\quasi}_{\Sigma_1}(q) &=\left|\{ z\in \Z_q^2\mid z_1,z_1+z_2 \ne \overline0\} \right| = (q-1)^2, \\
  \chi^{\quasi}_{\Sigma_2}(q) &=\left|\{ z\in \Z_q^2\mid z_1+z_2\ne \overline0\} \right| = q(q-1). 
\end{align*}
The fundamental alcove is given by $\overline{A^\circ}=\mathrm{conv} \{0,  \varpi^\vee_1, \varpi^\vee_2\} $. 
Hence its Ehrhart quasi-polynomial is given by ${\rm L}_{\overline{A^\circ}}(q)=\frac{(q+1)(q+2)}2.$ 
Moreover, one may compute the $\A$-Eulerian polynomials: $E_{\Sigma_0}(t) =t^2+t$, $E_{\Sigma_1}(t) =t^3+t^2$ and $E_{\Sigma_2}(t) =t^3+t$ (by e.g.~ a graphical method in \cite[\S3]{TT21}).
Thus 
\begin{align*}
 \chi^{\quasi}_{\Sigma_0}(q) &={\rm L}_{\overline{A^\circ}}(q-2) + {\rm L}_{\overline{A^\circ}}(q-1), \\
 \chi^{\quasi}_{\Sigma_1}(q) &={\rm L}_{\overline{A^\circ}}(q-3) + {\rm L}_{\overline{A^\circ}}(q-2), \\
  \chi^{\quasi}_{\Sigma_2}(q) &={\rm L}_{\overline{A^\circ}}(q-3) + {\rm L}_{\overline{A^\circ}}(q-1)-1. 
\end{align*}
The calculation above is consistent with Theorem \ref{thm:CO}.
\end{example}

\begin{figure}[!ht]
\centering
    \includegraphics[width=9.5cm,height=9cm]{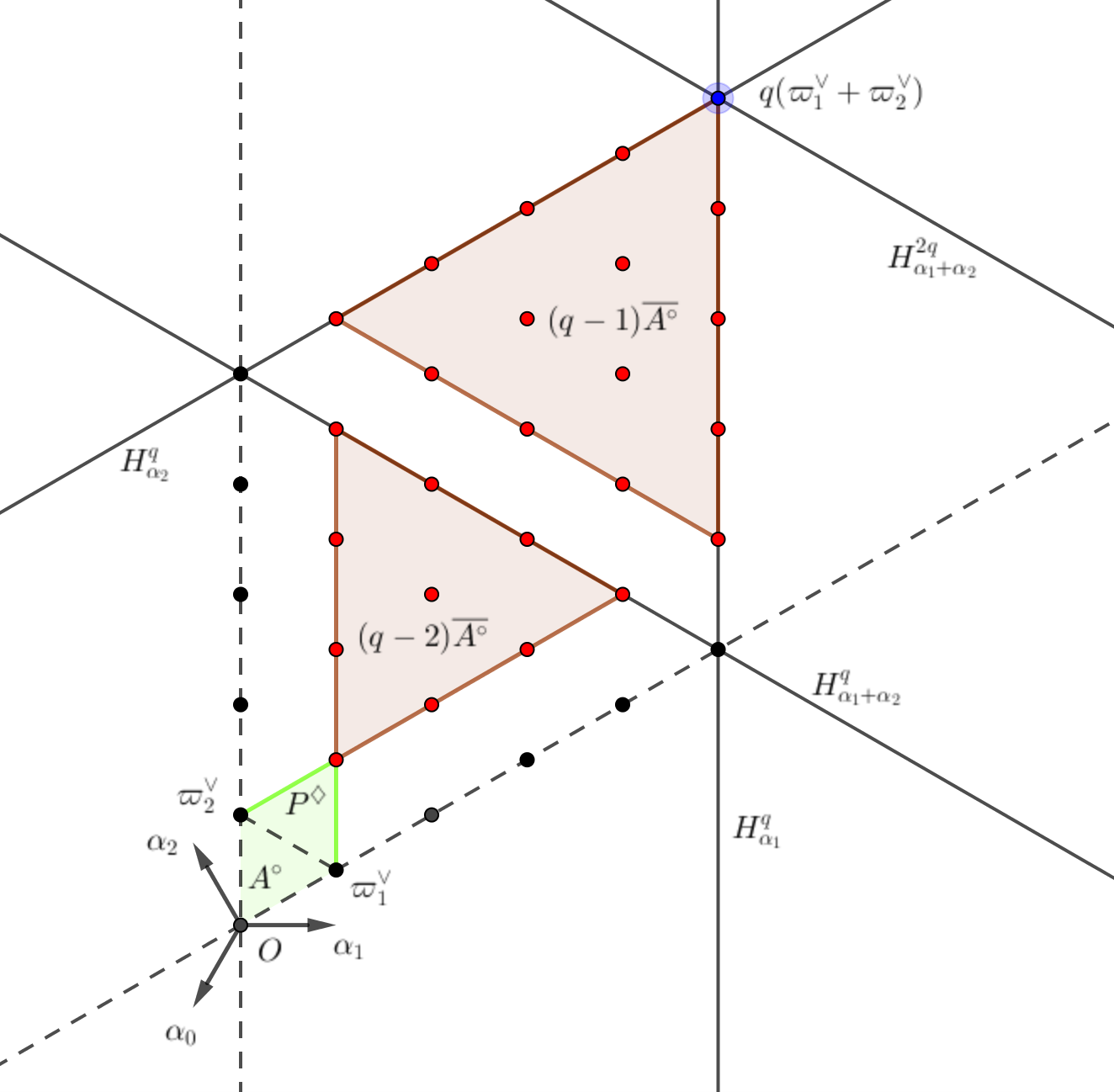}
    \caption{Root system of type $A_2$ from Example \ref{ex:A2}.}
    \label{fig:A2}
\end{figure}

\begin{remark}
\label{rem:quasi-Shi}
It follows from Theorem \ref{thm:CO}  that the compatibility is an essential geometric property for the characteristic quasi-polynomial of a Weyl subarrangement to be expressible in terms of the Ehrhart quasi-polynomial. 
Such an expression is also important for the study on the characteristic quasi-polynomials of deformed Weyl arrangements. 

For instance,  by using the Ehrhart theoretic method, Yoshinaga \cite[Theorem 5.1]{Y18W} showed that the characteristic quasi-polynomial of the extended Shi arrangement is actually a polynomial. 
More precisely,  
$$\chi^{\quasi}_{\mathrm{Shi}_{\Phi}^{[1-k,k]}}(q)  
 =  (q-k\h)^\ell \quad \mbox{for every $q\in\Z_{>0}$}.$$
The formula above provides the first example for the \emph{period collapse} phenomenon studied in \cite{HTY23}. 
 
The Ehrhart theoretic approach is also used to affirmatively settle the ``Riemann hypothesis", a conjecture of Postnikov-Stanley \cite{SP00} that all roots of the characteristic polynomial of the \emph{extended Linial arrangement} have the same real part \cite{Y18W, Y18L, Tam23}. 
\end{remark}

\section{Proof of the first main result: Theorem \ref{thm:WC}}
\label{sec:proof1}

 First we recall several known properties of roots. 
\begin{lemma} {\cite[Lemma 3.1]{PR01}}
\label{lem:proximity} 
Let $\beta \in \Phi^+$ and $\alpha, \alpha' \in \Delta$ with $\alpha \ne \alpha' $. 
If $\beta -\alpha \in \Phi^+$ and $\beta -\alpha' \in \Phi^+$,
then either $\beta =\alpha +\alpha'$ or $\beta -\alpha -\alpha' \in \Phi^+$. 
\end{lemma}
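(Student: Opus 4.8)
The plan is to reduce the statement to the single assertion that $\beta-\alpha-\alpha'$ is a \emph{root} whenever $\beta\ne\alpha+\alpha'$; positivity then comes for free. Indeed, since $\beta-\alpha,\beta-\alpha'\in\Phi^+$ and $\alpha,\alpha'$ are simple, the coefficients of $\alpha$ and of $\alpha'$ in the expansion of $\beta$ over $\Delta$ are both $\ge 1$, so $\beta-\alpha-\alpha'$ has nonnegative coordinates; being nonzero (as $\beta\ne\alpha+\alpha'$), any such root must be positive. Throughout I would lean on two standard facts about roots $\mu,\nu$: if $(\mu,\nu)>0$ and $\mu\ne\nu$ then $\mu-\nu\in\Phi$, and the difference of two distinct simple roots is never a root.

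The first attempt is the direct one: apply the fact above with $\mu=\beta-\alpha'$ and $\nu=\alpha$. Since $\beta-\alpha'\in\Phi^+$ and $\beta-\alpha'\ne\alpha$ (because $\beta\ne\alpha+\alpha'$), it would suffice to show $(\beta-\alpha',\alpha)>0$. The hard part is that this inner product need \emph{not} be positive, so I would argue by contradiction, assuming $\beta-\alpha-\alpha'\notin\Phi$. Reading the first fact contrapositively (applied to $\beta-\alpha'$ and $\alpha$, and to $\beta-\alpha$ and $\alpha'$) yields $(\beta,\alpha)\le(\alpha,\alpha')\le 0$ and $(\beta,\alpha')\le(\alpha,\alpha')\le 0$. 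Now comes the key structural input: because $\beta-\alpha\in\Phi$ and $(\beta,\alpha)\le 0$, the $\alpha$-root string through $\beta$ runs in both directions, hence has length $\ge 3$. For $\Phi\ne G_2$ every root string has length $\le 3$, which forces this string to be symmetric, i.e.\ $(\beta,\alpha)=0$ and $\beta+\alpha\in\Phi$. Symmetrically $(\beta,\alpha')=0$ and $\beta+\alpha'\in\Phi$, and feeding $(\beta,\alpha)=0$ back into the first inequality gives $(\alpha,\alpha')=0$.

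At this point everything is orthogonal, and I would deliver the contradiction by computing $(\beta+\alpha,\beta+\alpha')=(\beta,\beta)+(\beta,\alpha')+(\alpha,\beta)+(\alpha,\alpha')=(\beta,\beta)>0$. Since $\beta+\alpha$ and $\beta+\alpha'$ are then distinct roots with positive inner product, their difference $(\beta+\alpha)-(\beta+\alpha')=\alpha-\alpha'$ would have to be a root, which is impossible for a difference of two distinct simple roots. This contradiction shows $\beta-\alpha-\alpha'\in\Phi$ and settles the case $\Phi\ne G_2$. The case $\Phi=G_2$ I would dispatch by inspection of its six positive roots: the only $\beta$ with both $\beta-\alpha_1,\beta-\alpha_2\in\Phi^+$ is $\beta=\alpha_1+\alpha_2$, for which the first alternative of the conclusion already holds. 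The main obstacle, then, is the failure of the naive positivity of $(\beta-\alpha',\alpha)$; what rescues the argument is recognizing that the bad case is extremely rigid — the length-$\le 3$ constraint on root strings (valid away from $G_2$) pins it down to a fully orthogonal configuration — so that it collapses against the elementary fact that $\alpha-\alpha'$ cannot be a root.
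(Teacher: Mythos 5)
Your argument is correct, but there is nothing in the paper to compare it against: the authors import this statement verbatim from \cite[Lemma 3.1]{PR01} and give no proof of their own. Your reduction to showing that $\beta-\alpha-\alpha'$ is merely a root (positivity being automatic because the $\alpha$- and $\alpha'$-coefficients of $\beta$ are each at least $1$) is sound, and the contradiction argument is complete: assuming $\beta-\alpha-\alpha'\notin\Phi$ forces $(\beta,\alpha)\le(\alpha,\alpha')\le 0$ and $(\beta,\alpha')\le(\alpha,\alpha')\le 0$; the $\alpha$-string through $\beta$ then contains $\beta-\alpha,\beta,\beta+\alpha$, and the length-$\le 3$ bound away from $G_2$ pins it to $p=q=1$, whence $(\beta,\alpha)=(\beta,\alpha')=(\alpha,\alpha')=0$ and $\beta+\alpha,\beta+\alpha'\in\Phi$; the positivity of $(\beta+\alpha,\beta+\alpha')=(\beta,\beta)$ then makes $\alpha-\alpha'$ a root, which is absurd. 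The separate $G_2$ check is genuinely necessary (the $\alpha_1$-string through $\alpha_2$ has length $4$ there), and your inspection is right: the only $\beta$ with $\beta-\alpha_1,\beta-\alpha_2\in\Phi^+$ is $\alpha_1+\alpha_2$, where the first alternative holds. The one point worth making explicit if you write this up is the justification of the length bound itself (evaluate $\langle\,\cdot\,,\alpha^\vee\rangle$ at an endpoint of the string and use that its absolute value is at most $2$ outside $G_2$), since that is the load-bearing fact; everything else is routine.
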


Using a similar argument as in the proof of Lemma \ref{lem:proximity}, one may show the following extension of it.
(The case $\Phi = G_2$ should be treated separately.)

\begin{lemma} 
\label{lem:gen-proximity} 
Suppose $\beta,\gamma_1, \gamma_2 \in\Phi^+$ with $ \gamma_1\ne \gamma_2  $ and $ \gamma_1 - \gamma_2 \notin\Phi$. 
If $\beta - \gamma_1 \in \Phi^+$ and $\beta - \gamma_2 \in \Phi^+$,
then either $\beta = \gamma_1 + \gamma_2$ or $\beta - \gamma_1 - \gamma_2 \in \Phi$. 
\end{lemma}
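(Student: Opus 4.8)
The plan is to argue by contradiction and to reduce everything to a few inner-product inequalities followed by a single reflection. Throughout I assume $\Phi \neq G_2$; by the rank-$2$ classification recalled after Theorem~\ref{thm:SF-characterize} this guarantees that no rank-$2$ root subsystem of $\Phi$ is of type $G_2$, so every root string in $\Phi$ contains at most three roots. I will freely use the standard facts that for non-proportional $\mu,\nu\in\Phi$: (a)~$(\mu,\nu)>0$ implies $\mu-\nu\in\Phi$; and (b)~$(\mu,\nu)<0$ implies $\mu+\nu\in\Phi$. First I would dispose of the degenerate cases. Since $\delta_1:=\beta-\gamma_1$ and $\delta_2:=\beta-\gamma_2$ lie in $\Phi^+$, we have $\beta\neq\gamma_1,\gamma_2$, so (as $\Phi$ is reduced) $\beta$ is non-proportional to each of $\gamma_1,\gamma_2$; and if $\beta=\gamma_1+\gamma_2$ we are already done. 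Hence I assume $\beta\neq\gamma_1+\gamma_2$, which makes $\delta_1\neq\gamma_2$ and $\delta_2\neq\gamma_1$, and again by reducedness renders the pairs $\{\delta_1,\gamma_2\}$ and $\{\delta_2,\gamma_1\}$ non-proportional. Now suppose, toward a contradiction, that $\beta-\gamma_1-\gamma_2\notin\Phi$.

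Writing the target difference as $\beta-\gamma_1-\gamma_2=\delta_1-\gamma_2=\delta_2-\gamma_1$, the assumption together with fact (a) forces $(\delta_1,\gamma_2)\le0$ and $(\delta_2,\gamma_1)\le0$, while the hypothesis $\gamma_1-\gamma_2\notin\Phi$ gives $(\gamma_1,\gamma_2)\le0$. On the other hand, from $\delta_1=\beta-\gamma_1\in\Phi$ I would show $(\beta,\gamma_1)\ge0$: if instead $(\beta,\gamma_1)<0$, then $\beta+\gamma_1\in\Phi$ by (b), and as $(\beta,\gamma_1)<0$ the $\gamma_1$-string through $\beta$ extends to $\beta+2\gamma_1$; thus $\beta-\gamma_1,\beta,\beta+\gamma_1,\beta+2\gamma_1$ is a four-root string, forcing a $G_2$ subsystem --- excluded. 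The same argument gives $(\beta,\gamma_2)\ge0$. Expanding $(\delta_1,\gamma_2)=(\beta,\gamma_2)-(\gamma_1,\gamma_2)\le0$ then yields $0\le(\beta,\gamma_2)\le(\gamma_1,\gamma_2)\le0$, whence $(\beta,\gamma_2)=(\gamma_1,\gamma_2)=0$, and symmetrically $(\beta,\gamma_1)=0$. Thus $\beta,\gamma_1,\gamma_2$ are pairwise orthogonal.

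To close the argument I would use one reflection. Since $\beta\perp\gamma_1$ and $\rho:=\beta-\gamma_1\in\Phi$, the integer $\tfrac{2(\beta,\rho)}{(\rho,\rho)}=\tfrac{2|\beta|^2}{|\beta|^2+|\gamma_1|^2}$ lies strictly between $0$ and $2$, hence equals $1$; therefore $|\gamma_1|^2=|\beta|^2$ and $(\rho,\rho)=2|\beta|^2$. Using pairwise orthogonality one computes $(\beta-\gamma_2,\rho)=|\beta|^2$, so the reflection $s_\rho(x)=x-\tfrac{2(x,\rho)}{(\rho,\rho)}\rho$ sends $\beta-\gamma_2\mapsto(\beta-\gamma_2)-\rho=\gamma_1-\gamma_2$. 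As $\Phi\cap\mathrm{span}(\beta,\gamma_1,\gamma_2)$ is a root subsystem stable under $s_\rho$ and contains $\beta-\gamma_2=\delta_2$, we obtain $\gamma_1-\gamma_2\in\Phi$, contradicting the hypothesis. This proves the statement for $\Phi\neq G_2$.

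The hard part is precisely the length-and-string control of the previous two paragraphs, and it is exactly there that $G_2$ must be set aside: a $G_2$ subsystem would permit four-root strings (so the sign $(\beta,\gamma_i)\ge0$ could fail) and squared-length ratio $3$ (which would break the Cartan-integer step). When $\Phi=G_2$ the ambient space is two-dimensional, so no three roots can be pairwise orthogonal and the configuration above cannot occur; there I would instead verify the statement by a direct finite inspection, running over the six positive roots and checking every triple $(\beta,\gamma_1,\gamma_2)$ with $\gamma_1-\gamma_2\notin\Phi$ and $\beta-\gamma_1,\beta-\gamma_2\in\Phi^+$.
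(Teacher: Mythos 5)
Your argument is correct, and it is considerably more explicit than what the paper offers: the paper gives no proof of Lemma~\ref{lem:gen-proximity} at all, only the remark that it follows ``by a similar argument as in the proof of'' the cited Lemma~\ref{lem:proximity}, with $G_2$ handled separately. Your opening sign analysis --- $(\beta-\gamma_1,\gamma_2)\le 0$, $(\beta-\gamma_2,\gamma_1)\le 0$, $(\gamma_1,\gamma_2)\le 0$ from the non-root differences --- is exactly the intended starting point. Where you diverge is the endgame: you pass through the string-length bound to get $(\beta,\gamma_i)\ge 0$, deduce pairwise orthogonality, and then apply $s_{\beta-\gamma_1}$; this works, but it is what forces you to exclude $G_2$ (four-root strings) and to fall back on a finite inspection there. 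The standard route closes faster and uniformly: from the three inequalities above one gets directly
\[
(\beta-\gamma_1,\,\beta-\gamma_2)\;=\;|\beta|^2-(\beta,\gamma_1)-(\beta,\gamma_2)+(\gamma_1,\gamma_2)\;\ge\;|\beta|^2-(\gamma_1,\gamma_2)\;>\;0,
\]
and since $\beta-\gamma_1$ and $\beta-\gamma_2$ are distinct positive roots (hence non-proportional), their difference $\gamma_2-\gamma_1$ is a root, contradicting the hypothesis --- with no appeal to root strings, length ratios, or a $G_2$ case split. So your proof is valid as written (the string argument, the Cartan-integer computation, and the reflection all check out), but the orthogonality-plus-reflection detour and the separate $G_2$ verification are avoidable; you may wish to note that the inequality above subsumes both.
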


\begin{lemma}{\cite[Lemma 11.10]{LN04}}
\label{lem:3roots} 
Suppose $\beta_1,  \beta_2, \beta_3 \in \Phi$ with $\beta_1+  \beta_2+ \beta_3 \in \Phi$ and $\beta_i+  \beta_j \ne 0$ for $i \ne j$. Then at least two of the three partial sums $\beta_i+  \beta_j$ belong to $ \Phi$.
\end{lemma}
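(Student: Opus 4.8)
The plan is to argue by contradiction using only the two elementary sign criteria for roots coming from $\mathfrak{sl}_2$-string theory: for $\alpha,\beta\in\Phi$, (a) if $(\alpha,\beta)<0$ and $\alpha+\beta\neq 0$ then $\alpha+\beta\in\Phi$, and (b) if $(\alpha,\beta)>0$ and $\alpha\neq\beta$ then $\alpha-\beta\in\Phi$. Write $\gamma:=\beta_1+\beta_2+\beta_3\in\Phi$ and note the crucial reformulation $\beta_i+\beta_j=\gamma-\beta_k$ whenever $\{i,j,k\}=\{1,2,3\}$. Thus the three partial sums are exactly the three differences $\gamma-\beta_k$, and I want to show at most one of them fails to be a root. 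So I would suppose for contradiction that two of them, say $\gamma-\beta_1$ and $\gamma-\beta_2$ (equivalently $\beta_2+\beta_3$ and $\beta_1+\beta_3$), are not in $\Phi$, and derive a contradiction with the hypothesis $\beta_i+\beta_j\neq 0$.

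From this assumption I would extract inner-product inequalities. Since $\gamma-\beta_1=\beta_2+\beta_3\neq 0$ we have $\gamma\neq\beta_1$, so the contrapositive of (b) forces $(\gamma,\beta_1)\le 0$; likewise $(\gamma,\beta_2)\le 0$. Because $\gamma$ is a root, $(\gamma,\gamma)>0$, and expanding $(\gamma,\gamma)=(\gamma,\beta_1)+(\gamma,\beta_2)+(\gamma,\beta_3)$ then yields $(\gamma,\beta_3)\ge(\gamma,\gamma)>0$ (so the remaining partial sum $\beta_1+\beta_2=\gamma-\beta_3$ is indeed a root, consistent with the count). On the other hand, applying the contrapositive of (a) to the non-roots $\beta_2+\beta_3$ and $\beta_1+\beta_3$ gives $(\beta_2,\beta_3)\ge 0$ and $(\beta_1,\beta_3)\ge 0$.

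Now I would combine these. Expanding $(\gamma,\beta_1)\le 0$ and discarding the nonnegative term $(\beta_1,\beta_3)$ gives $(\beta_1,\beta_2)\le-(\beta_1,\beta_1)$; symmetrically $(\gamma,\beta_2)\le 0$ together with $(\beta_2,\beta_3)\ge 0$ gives $(\beta_1,\beta_2)\le-(\beta_2,\beta_2)$. Multiplying the two inequalities $-(\beta_1,\beta_2)\ge(\beta_1,\beta_1)$ and $-(\beta_1,\beta_2)\ge(\beta_2,\beta_2)$ produces $(\beta_1,\beta_2)^2\ge(\beta_1,\beta_1)(\beta_2,\beta_2)$, which saturates the Cauchy--Schwarz inequality. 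Hence $\beta_1$ and $\beta_2$ are proportional; since $\Phi$ is reduced the only proportional roots are $\pm\beta_1$, and the strict inequality $(\beta_1,\beta_2)<0$ forces $\beta_2=-\beta_1$, i.e.\ $\beta_1+\beta_2=0$, contradicting the hypothesis. This contradiction shows at most one partial sum is a non-root, which is the assertion.

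The appealing feature of this route is that it bypasses the classification entirely and never isolates $G_2$, so it fits the spirit of the paper while needing none of its special machinery. The step I would be most careful about is the bookkeeping in the degenerate configurations: one must confirm that the two sign criteria still apply at the boundary --- for instance when $\gamma=-\beta_1$, where criterion (b) is simply never invoked because then $(\gamma,\beta_1)<0$ automatically, and when some $\beta_i$ coincide --- and that the final proportionality conclusion genuinely uses the reducedness of $\Phi$ to exclude the scalars $\pm 2,\pm\tfrac12$ that would appear in a non-reduced system. Everything else is routine expansion of inner products.
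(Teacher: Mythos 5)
Your argument is correct, but note that the paper does not actually prove this lemma: it is quoted verbatim from \cite[Lemma~11.10]{LN04} and used as a black box, so there is no internal proof to compare against. What you have supplied is a clean, self-contained derivation from the two standard $\mathfrak{sl}_2$-string criteria, and the logic checks out: writing $\gamma=\beta_1+\beta_2+\beta_3$ and assuming $\gamma-\beta_1,\gamma-\beta_2\notin\Phi$, the contrapositives give $(\gamma,\beta_1)\le 0$, $(\gamma,\beta_2)\le 0$, $(\beta_1,\beta_3)\ge 0$, $(\beta_2,\beta_3)\ge 0$ (the hypotheses $\beta_i+\beta_j\ne 0$ are exactly what rule out the degenerate cases $\gamma=\beta_k$ and $\beta_i=-\beta_j$ in those contrapositives), and expanding $(\gamma,\beta_1)$ and $(\gamma,\beta_2)$ then yields $-(\beta_1,\beta_2)\ge(\beta_1,\beta_1)$ and $-(\beta_1,\beta_2)\ge(\beta_2,\beta_2)$, forcing equality in Cauchy--Schwarz and hence $\beta_2=-\beta_1$ by reducedness, a contradiction. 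This is in the same spirit as the standard proofs of such ``partial sum'' lemmas and, as you say, avoids any case analysis or appeal to the rank-$2$ classification, which is consistent with the paper's stated aim of using only minimal classification input. The one hypothesis you rely on implicitly that deserves a sentence is that $\Phi$ is reduced; this is the standing convention in the paper (its rank-$2$ list is $A_1^2,A_2,B_2,G_2$ with no $BC$-types), so the step ``proportional roots are $\pm\beta_1$'' is legitimate here, though the cited source \cite{LN04} proves the lemma in a setting that also covers non-reduced systems.
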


 \begin{lemma} {\cite[Lemma 3.2]{S05}}
\label{lem:reorder} 
Let $\beta_1 \in \Phi \cup\{0\}$. 
Suppose that $\beta_i \in \Phi^+$ for $2 \le i \le k $ and $\sum^k_{i=1} \beta_i \in \Phi \cup\{0\}$. Then there exists a re-ordering of the $\beta_i$'s with $i \ge 2$ so that
$\sum^j_{i=1} \beta_i \in \Phi \cup\{0\}$   for all $1 \le j \le k$.
\end{lemma}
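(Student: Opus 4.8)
The plan is to argue by induction on $k$, at each step stripping off a single summand so as to reduce to a shorter sequence satisfying the same hypotheses. The base case $k=1$ is immediate, since the only partial sum is $\beta_1\in\Phi\cup\{0\}$ by assumption. For $k\ge2$ I would set $\gamma:=\sum_{i=1}^k\beta_i\in\Phi\cup\{0\}$, and reduce everything to the following claim: there exists an index $j\ge2$ for which \emph{either} $\gamma-\beta_j\in\Phi\cup\{0\}$ \emph{or} $\beta_1+\beta_j\in\Phi\cup\{0\}$. I will call these two alternatives \emph{back-peeling} and \emph{front-peeling}.

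Granting the claim, the induction closes as follows. If back-peeling is available, I apply the inductive hypothesis to the length-$(k-1)$ sequence consisting of $\beta_1$ together with the $\beta_i$ for $i\ge2,\,i\ne j$; its first term lies in $\Phi\cup\{0\}$, its remaining terms are positive roots, and its total sum is $\gamma-\beta_j\in\Phi\cup\{0\}$, so the hypotheses hold. Appending $\beta_j$ as the final term then gives an admissible ordering of the original sequence, since the last partial sum is $\gamma\in\Phi\cup\{0\}$. If instead front-peeling is available, I apply the inductive hypothesis to the length-$(k-1)$ sequence whose first term is $\beta_1+\beta_j\in\Phi\cup\{0\}$ and whose remaining terms are the $\beta_i$ for $i\ge2,\,i\ne j$; its total sum is again $\gamma$. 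Replacing the merged first term by $\beta_1,\beta_j$ yields an admissible ordering, because each partial sum of the original sequence either equals $\beta_1\in\Phi\cup\{0\}$ or coincides with a partial sum of the shorter sequence. The key point that makes this work is that both reductions genuinely preserve the precise hypotheses of the lemma.

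To prove the claim I would use the standard inner-product criteria for root systems: for $\alpha,\beta\in\Phi$, $(\alpha,\beta)>0$ forces $\alpha-\beta\in\Phi\cup\{0\}$, while $(\alpha,\beta)<0$ forces $\alpha+\beta\in\Phi\cup\{0\}$. If $\gamma=0$, back-peeling works for every $j\ge2$ since $\gamma-\beta_j=-\beta_j\in\Phi$. Suppose $\gamma\ne0$, so $(\gamma,\gamma)>0$. If $(\gamma,\beta_j)>0$ for some $j\ge2$, then $\gamma-\beta_j\in\Phi\cup\{0\}$ and back-peeling works. Otherwise $(\gamma,\beta_j)\le0$ for all $j\ge2$, and expanding $(\gamma,\gamma)=\sum_{i=1}^k(\gamma,\beta_i)>0$ forces $(\gamma,\beta_1)>0$; thus $\beta_1\ne0$ and $\delta:=\gamma-\beta_1=\sum_{i\ge2}\beta_i$ lies in $\Phi\cup\{0\}$, and as a nonempty sum of positive roots it is in fact a nonzero element of $\Phi^+$. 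If one had $(\beta_1,\beta_j)\ge0$ for every $j\ge2$, then $(\delta,\beta_j)=(\gamma,\beta_j)-(\beta_1,\beta_j)\le0$ for all $j\ge2$, whence $(\delta,\delta)=\sum_{i\ge2}(\delta,\beta_i)\le0$, contradicting $\delta\ne0$. Hence $(\beta_1,\beta_j)<0$ for some $j\ge2$, giving $\beta_1+\beta_j\in\Phi\cup\{0\}$ and making front-peeling available.

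The main obstacle is exactly this last subcase, where $\gamma$ pairs nonpositively with every movable summand so that back-peeling is simply unavailable; the resolution is to switch to front-peeling and to certify its availability through the positivity of $(\delta,\delta)$. A secondary, but essential, point requiring care is the bookkeeping: I must check that each of the two reduction operations preserves the exact hypotheses (first term in $\Phi\cup\{0\}$, remaining terms in $\Phi^+$, total sum in $\Phi\cup\{0\}$) and that an admissible reordering of the shorter sequence lifts back to an admissible reordering of the original one.
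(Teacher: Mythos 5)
The paper does not prove this lemma at all: it is quoted verbatim from \cite[Lemma 3.2]{S05}, so there is no in-paper argument to compare against. Your proof is correct and is essentially the standard inner-product induction behind such reordering statements: since $(\gamma,\gamma)>0$ some summand pairs positively with $\gamma$ and can be peeled off the back, and the only genuine wrinkle here --- that $\beta_1$ is pinned to the first position, so back-peeling can fail when $\gamma$ pairs nonpositively with every $\beta_j$, $j\ge 2$ --- is exactly the point you isolate and resolve via the front-peeling alternative, using $(\delta,\delta)>0$ for $\delta=\sum_{i\ge 2}\beta_i$ to produce some $j$ with $(\beta_1,\beta_j)<0$. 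The bookkeeping for both reductions (hypotheses preserved for the length-$(k-1)$ sequence, admissible orderings lifting back) is checked correctly, including the degenerate possibilities $\gamma=0$, $\beta_1=0$, and $\beta_1+\beta_j=0$. I see no gap.
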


For any alcove $A$ and  $\gamma\in \Phi^+$, there exists a unique integer $r$ with $r-1 < (x,\gamma) < r$ for all $x \in A$. We denote this integer by $r(A,\gamma)$. 
The map $r(A,-) : \Phi^+ \longrightarrow \Z$ is called the \tbf{address} of the alcove $A$.

 \begin{lemma}{\cite[Theorem 5.2]{Shi87}, \cite[Lemma 2.4]{A05}}
\label{lem:Shi}
Suppose that for each $\gamma\in \Phi^+$ we are given an integer $r(\gamma)$. 
The map $r:\Phi^+ \longrightarrow \Z$ is the address of some alcove $A$ if and only if 
$$r(\gamma)+r(\gamma')-1 \le r(\gamma+\gamma') \le r(\gamma)+r(\gamma')\mbox{ whenever } \gamma,\gamma',\gamma+\gamma'\in \Phi^+.$$
\end{lemma}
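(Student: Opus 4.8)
The plan is to treat the two implications separately; the forward (necessity) direction is immediate and the backward (sufficiency) direction carries all the content. For necessity, suppose $r=r(A,-)$ is the address of an alcove $A$ and fix any $x\in A$. By definition $(x,\gamma)\in(r(\gamma)-1,r(\gamma))$ for every $\gamma\in\Phi^+$, and since the inner product is additive, $(x,\gamma+\gamma')=(x,\gamma)+(x,\gamma')$ lies in the open interval $(r(\gamma)+r(\gamma')-2,\,r(\gamma)+r(\gamma'))$ whenever $\gamma,\gamma',\gamma+\gamma'\in\Phi^+$. As $A$ meets no hyperplane $H_{\gamma+\gamma'}^n$, the number $(x,\gamma+\gamma')$ is not an integer, so $r(\gamma+\gamma')=\lceil(x,\gamma+\gamma')\rceil$; comparing this ceiling with the two endpoints above gives $r(\gamma)+r(\gamma')-1\le r(\gamma+\gamma')\le r(\gamma)+r(\gamma')$, as required.

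For sufficiency, suppose $r$ satisfies the displayed inequalities, and consider the open region $A:=\{x\in V\mid r(\gamma)-1<(x,\gamma)<r(\gamma)\text{ for all }\gamma\in\Phi^+\}$. Every point of $A$ avoids each hyperplane $H_\gamma^n$, so if $A\ne\emptyset$ it lies in a single alcove, and since no such hyperplane meets $A$ it is exactly that alcove and by construction has address $r$. The task thus reduces to proving $A\ne\emptyset$. Because $\Delta$ is a basis of $V$, a point $x$ is determined by the values $(x,\alpha_i)$, which may be prescribed freely; write $(x,\alpha_i)=r(\alpha_i)-1+s_i$ with $s_i\in(0,1)$. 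For $\gamma=\sum_i d_i\alpha_i\in\Phi^+$ set $N(\gamma):=r(\gamma)-\sum_i d_i(r(\alpha_i)-1)$. Then $(x,\gamma)\in(r(\gamma)-1,r(\gamma))$ is equivalent to $\sum_i d_i s_i\in(N(\gamma)-1,N(\gamma))$, so it suffices to find $s=(s_1,\dots,s_\ell)\in(0,1)^\ell$ meeting this family of slab constraints.

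I would establish feasibility by induction on ${\rm ht}(\gamma)$. The height-one constraints are vacuous, since $N(\alpha_i)=1$ forces only $s_i\in(0,1)$. For the inductive step, choose a simple root $\alpha_i$ with $\gamma':=\gamma-\alpha_i\in\Phi^+$; the hypothesis applied to the pair $\gamma',\alpha_i$ gives $r(\gamma)-r(\gamma')-r(\alpha_i)+1\in\{0,1\}$, and a short computation yields $N(\gamma)=N(\gamma')+\delta$ with $\delta\in\{0,1\}$. Writing $L_\mu:=\sum_i d_i^{(\mu)}s_i$, one has $L_\gamma=L_{\gamma'}+s_i$, so the constraint on $\gamma$ amounts to a single new open half-space condition ($L_\gamma<N(\gamma)$ when $\delta=0$, or $L_\gamma>N(\gamma)-1$ when $\delta=1$) added to the constraints already satisfied in lower height.

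The main obstacle is precisely to show that adjoining these half-spaces never empties the region: a given root admits several chains of simple-root subtractions, and the carries $\delta$ produced by different chains must be globally consistent. I would resolve this by assuming infeasibility and invoking a theorem of the alternative for strict linear inequalities to produce a nontrivial relation among positive roots, then using Lemma \ref{lem:reorder} to reorder that relation into a chain of elementary additions $\gamma+\gamma'=\gamma''$ of positive roots, with Lemmas \ref{lem:3roots} and \ref{lem:proximity} controlling how the roots of a rank-two subsystem combine (and $G_2$ handled separately through Lemma \ref{lem:gen-proximity}); telescoping the hypothesis inequality along this chain then contradicts the relation. This consistency-of-carries step is the heart of the argument and the only place where the finer root-combinatorics is needed.
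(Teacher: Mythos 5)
First, note that the paper does not prove Lemma \ref{lem:Shi} at all: it is imported verbatim from Shi and from Athanasiadis, so there is no internal argument to measure yours against; your attempt has to stand on its own. The necessity half does: additivity of the inner product places $(x,\gamma+\gamma')$ in $(r(\gamma)+r(\gamma')-2,\,r(\gamma)+r(\gamma'))$, non-integrality pins $r(\gamma+\gamma')$ to one of the two admissible integers, and that direction is complete.

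The sufficiency half, however, contains a genuine gap: the entire difficulty of the theorem is the feasibility of the slab system, and at exactly that point you switch from proving to planning. You correctly observe that each height-$h$ root adds one new strict half-space to the constraints inherited from height $h-1$, and that the problem is the global consistency of these half-spaces across different subtraction chains --- but the resolution is only asserted. Concretely: a theorem of the alternative for strict inequalities yields nonnegative (and, after clearing denominators, integral) multipliers $\lambda_\gamma,\mu_\gamma$ with $\sum_\gamma(\lambda_\gamma-\mu_\gamma)\gamma=0$ and $\sum_\gamma\lambda_\gamma r(\gamma)+\sum_\gamma\mu_\gamma(1-r(\gamma))\le 0$, and to contradict this one must prove a subadditivity statement for $r$ extended to all of $\Phi$ by $r(-\gamma):=1-r(\gamma)$, namely that $\sum_{\beta\in M}r(\beta)\ge 1$ for every nonempty multiset $M$ of roots (of both signs) summing to $0$. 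None of this is in your write-up, and the tool you name for it does not apply as stated: Lemma \ref{lem:reorder} requires all summands but one to be \emph{positive} roots, whereas the infeasibility certificate is a vanishing sum of roots of both signs, so you need either a merging argument (pair off $\beta_1$ with some $\beta_j$ satisfying $(\beta_1,\beta_j)<0$, replace the pair by $\beta_1+\beta_j\in\Phi\cup\{0\}$, and induct) or a strengthened reordering lemma, neither of which you supply. The step you yourself call ``the heart of the argument'' is therefore missing rather than merely compressed. For comparison, the proofs in the cited sources avoid the consistency question altogether by inducting on $\sum_\gamma|r(\gamma)-1|$ and performing an explicit wall-crossing from the fundamental alcove; if you prefer your duality route, the subadditivity inequality above is the lemma you must state and prove.
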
 

For a subset $B \subseteq \Phi$, define $\Phi_B(\Z) := \Phi \cap \Z B$. 
Then $\Phi_B(\Z)$ is a root subsystem of $\Phi$. 
A positive system of $\Phi_B(\Z)$ is taken to be $ \Phi^+ \cap \Phi_{B}(\Z)$.

 \begin{lemma}{\cite[Proof of Theorem 4.16]{ATY20}}
\label{lem:ideal-crucial}
Let $A  \subseteq P^\diamondsuit$ be an alcove inside the fundamental parallelepiped.  
Suppose there exist $\alpha\in \Phi^+,n_\alpha \in \Z$ such that the intersection $A^\diamondsuit \cap H_{\alpha}^{n_\alpha}$ is a  nonempty face   of $A^\diamondsuit$. 
Let $H_{\beta_1}^{n_{\beta_1}}, \ldots, H_{\beta_m}^{n_{\beta_m}}$ for $m\ge1$ be the (pairwise distinct) ceilings of $A$ that define the intersection, i.e.~ $A^\diamondsuit \cap H_{\alpha}^{n_\alpha}= \bigcap_{i=1}^m H_{\beta_i}^{n_{\beta_i}}\cap  A^\diamondsuit$. 
Then $B :=\{\beta_i \mid 1 \le i \le m\}$  is a set of simple roots of $\Phi_{B}(\Z)$, and $\alpha \in \Phi^+ \cap \Phi_{B}(\Z)$. 
\end{lemma}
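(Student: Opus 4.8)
The plan is to translate the whole configuration into the language of alcove addresses (Lemma \ref{lem:Shi}) and then argue by induction on height. Write $r_\gamma := r(A,\gamma)$ for $\gamma\in\Phi^+$, and recall the defining inequalities of the upper closure: $x\in A^\diamondsuit$ if and only if $r_\gamma-1<(x,\gamma)\le r_\gamma$ for every $\gamma\in\Phi^+$. First I would note that $A\subseteq P^\diamondsuit$ forces $r_\gamma\ge 1$ and $(x,\gamma)>0$ for all $x\in A^\diamondsuit$ and all $\gamma\in\Phi^+$. Since $n_\alpha$ and the $n_{\beta_i}$ are integers lying in the half-open intervals $(r_\gamma-1,r_\gamma]$, they equal the corresponding $r_\gamma$; thus $n_\alpha=r_\alpha$, $n_{\beta_i}=r_{\beta_i}$, and the affine hull of $F$ is the codimension-$m$ flat $\bigcap_{i=1}^m H_{\beta_i}^{r_{\beta_i}}$. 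It is convenient to introduce $G:=\{\gamma\in\Phi^+\mid (x,\gamma)=r_\gamma \text{ for all } x\in F\}$, the set of positive roots whose ceiling hyperplane contains $F$; note $B\subseteq G$ and $\alpha\in G$. The crux will be to prove $G\subseteq\sum_{i=1}^m\Z_{\ge 0}\beta_i$.

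As a first, purely convex-geometric step I would show $\gamma\in\sum_i\R_{\ge 0}\beta_i$ for every $\gamma\in G$. Fix $x$ in the relative interior of $F$. Since $F$ is the codimension-$m$ face of the simplex $\overline{A}$ cut out by the ceilings $H_{\beta_1},\dots,H_{\beta_m}$, the $\beta_i$ are linearly independent (outer normals of the $m$ facets through $x$) and the tangent cone of $\overline{A}$ at $x$ is $\{v\mid (v,\beta_i)\le 0\ \forall i\}$. As $\gamma\in G$ satisfies $\overline{A}\subseteq\{y\mid (y,\gamma)\le r_\gamma\}$, this tangent cone lies in $\{v\mid (v,\gamma)\le 0\}$, and Farkas' lemma yields $\gamma\in\sum_i\R_{\ge 0}\beta_i$; in particular $G$ lies in the $\R$-span of $B$.

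The main step, and the place I expect the real difficulty, is to upgrade membership in the \emph{real} cone to $\gamma\in\sum_i\Z_{\ge 0}\beta_i$: convexity alone is genuinely insufficient, since a root can lie in the real cone of $B$ without lying in $\Z B$ (the sublattice $\Z B$ need not be saturated, as already happens in $B_2$), so one must use the address inequalities to produce an honest descent. I would induct on ${\rm ht}(\gamma)$ for $\gamma\in G$. If $\gamma\in B$ we are done, so assume $\gamma\in G\setminus B$ and write $\gamma=\sum_i c_i\beta_i$ with $c_i\ge 0$. From $0<(\gamma,\gamma)=\sum_i c_i(\beta_i,\gamma)$ there is an index $j$ with $c_j>0$ and $(\beta_j,\gamma)>0$, whence $\delta:=\gamma-\beta_j\in\Phi$ (and $\delta\ne 0$ since $\gamma\ne\beta_j$). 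I would first rule out $\delta\in-\Phi^+$: if $-\delta\in\Phi^+$, then applying Lemma \ref{lem:Shi} to $\gamma,-\delta,\beta_j=\gamma+(-\delta)$ together with the upper-closure inequalities forces $r_{-\delta}=(x,-\delta)$ on $F$, i.e.\ $-\delta\in G$; but then $\beta_j=\gamma+(-\delta)$ writes the linearly independent root $\beta_j$ as a sum of two nonzero elements of $\sum_i\R_{\ge 0}\beta_i$, which by linear independence and reducedness forces $\gamma=\beta_j$, a contradiction. Hence $\delta\in\Phi^+$. The same address computation applied to $\beta_j,\delta,\gamma=\beta_j+\delta$, using the \emph{strict} lower inequality defining $A^\diamondsuit$ to discard the floor case, gives $r_\delta=(x,\delta)$ on $F$, so $\delta\in G$ with ${\rm ht}(\delta)<{\rm ht}(\gamma)$. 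By induction $\delta\in\sum_i\Z_{\ge 0}\beta_i$, hence $\gamma=\beta_j+\delta\in\sum_i\Z_{\ge 0}\beta_i$. Applying this to $\alpha\in G$ gives $\alpha\in\sum_i\Z_{\ge 0}\beta_i\subseteq\Z B$, so $\alpha\in\Phi^+\cap\Phi_B(\Z)$.

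It remains to see that $B$ is a set of simple roots of $\Phi_B(\Z)$. Linear independence was already established. To get $(\beta_i,\beta_j)\le 0$ for $i\ne j$ I would show $\beta_i-\beta_j\notin\Phi$: if $\beta_i-\beta_j\in\Phi^+$ it lies in $G$ by the same address argument, hence in $\sum_k\Z_{\ge 0}\beta_k$ by the previous paragraph, and comparing coefficients against the independent set $B$ forces the coefficient of $\beta_j$ to be $-1$, a contradiction; the case $\beta_j-\beta_i\in\Phi^+$ is symmetric. Since the $\beta_i$ are linearly independent, lie in the common open half-space $\{(\sum_k\varpi^\vee_k,\cdot)>0\}$ (as ${\rm ht}(\beta_i)>0$), and are pairwise non-acute, the standard criterion for simple systems shows $B$ is a base of $\Phi\cap\Z B=\Phi_B(\Z)$, which completes the argument. (One could instead route the base claim through Lemmas \ref{lem:3roots} and \ref{lem:reorder}, but the descent above already delivers it.)
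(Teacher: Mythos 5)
First, a point of comparison: the paper does not prove this lemma at all --- it is imported verbatim from \cite[Proof of Theorem 4.16]{ATY20} --- so your argument has to stand on its own rather than be matched against an in-paper proof. Most of it does stand. The translation into addresses, the identification $n_\alpha=r_\alpha$ and $n_{\beta_i}=r_{\beta_i}$, the Farkas step placing every $\gamma\in G$ in the real cone $\sum_i\R_{\ge 0}\beta_i$, and the height descent showing $G\subseteq\sum_i\Z_{\ge 0}\beta_i$ (including the careful elimination of the case $\gamma-\beta_j\in-\Phi^+$ via reducedness and linear independence) are all correct, and you are right that the passage from the real cone to the integral cone is where the content lies.

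The genuine gap is in the final inference. ``Linearly independent, lying in a common open half-space, pairwise non-acute'' is a standard criterion for $B$ to be a simple system of the root system \emph{generated} by $B$ (the orbit of $B$ under the reflections $s_{\beta_i}$), but not of $\Phi\cap\Z B=\Phi_B(\Z)$, which is what the lemma asserts; these two subsystems can differ, and the difference is exactly the delicate point in the doubly-laced cases this paper is about. Concretely, in $B_2$ the set $\{\alpha_1,\alpha_1+\alpha_2\}$ is linearly independent, consists of positive roots, and is orthogonal, yet $\Phi\cap\Z\{\alpha_1,\alpha_1+\alpha_2\}=B_2$ and the set is not a base of it. Your additional fact $\beta_i-\beta_j\notin\Phi$ happens to exclude this example, but the criterion you invoke does not use that fact, and you never prove that it suffices. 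Fortunately the repair is already inside your own argument: for any $\gamma\in\Phi^+\cap\Z B$ and $x\in F$ one has $(x,\gamma)=\sum_i n_i r_{\beta_i}\in\Z$, and since $r_\gamma-1<(x,\gamma)\le r_\gamma$ this forces $(x,\gamma)=r_\gamma$, i.e.\ $\Phi^+\cap\Z B\subseteq G$. Combined with your descent result $G\subseteq\sum_i\Z_{\ge 0}\beta_i$ and the linear independence of $B$, this is precisely the statement that $B$ is a base of $\Phi_B(\Z)$ with positive system $\Phi^+\cap\Phi_B(\Z)$; it also makes the paragraph about $\beta_i-\beta_j\notin\Phi$ unnecessary. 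Add that one line and the proof is complete.
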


 In order to make the proof of Theorem \ref{thm:WC} more readable, we break it into three lemmas.

\begin{proof}[\textbf{Proof of Theorem \ref{thm:WC}}]
We show $(2) \Leftrightarrow (3)$, $(2) \Rightarrow (1)$ and $(1) \Rightarrow (2)$ in Lemmas \ref{lem:(2)<=>(3)}, \ref{lem:(2)=>(1)}  and \ref{lem:(1)=>(2)}, respectively. 
The implication $(1) \Rightarrow (2)$  is the most difficult part. 
 \end{proof}

\begin{lemma} 
\label{lem:(2)<=>(3)} 
A subset $\Sigma\subseteq\Phi^+$ is $2$-locally compatible if and only if it is  negatively coclosed, or one of the seven exceptions in type $G_2$ in Definition \ref{def:G2}(\ref{item:CO-WC}).
\end{lemma}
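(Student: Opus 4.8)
The plan is to prove Lemma \ref{lem:(2)<=>(3)} by reducing $2$-local compatibility to a statement about each rank $2$ irreducible localization $\Phi_X$ for $X \in L_2(\A)$, and then to compare this with negative coclosedness root by root. First I would recall, via Remark \ref{rem:LC-LS}(\ref{item:A2B2}), that for a rank $2$ irreducible root system $\Phi_X$ of type $A_2$ or $B_2$, the localization $\Sigma_X$ is compatible in $\Phi_X$ if and only if $\Sigma_X$ is either empty or contains a simple root of $\Phi_X$, which is precisely the $2$-local simplicity condition, and that in these two types this coincides with negative coclosedness (computed internally in $\Phi_X$). The type $A_1^2$ localizations are reducible and are excluded by the hypothesis ``$\Phi_X$ irreducible'', so they impose no condition. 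Hence the whole content of $2$-local compatibility is an assertion ranging over all irreducible rank $2$ localizations, each of type $A_2$, $B_2$, or $G_2$.

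The next, and central, step is to translate the \emph{global} combinatorial statement ``$\Sigma$ is negatively coclosed'' into a statement about localizations, so that it can be matched against $2$-local compatibility type by type. The key observation is that negative coclosedness only involves a root $\alpha \in \Sigma$ and a decomposition $\alpha = d_1\beta_1 + d_2\beta_2$ with $(\beta_1,\beta_2)<0$; the three roots $\alpha, \beta_1, \beta_2$ then all lie in a single rank $2$ root subsystem, namely $\Phi_B(\Z)$ for $B = \{\beta_1,\beta_2\}$, and the condition $(\beta_1,\beta_2)<0$ forces this subsystem to be irreducible (orthogonal pairs generate $A_1^2$, which has no such negatively-paired decompositions). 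I would argue that $\Phi_B(\Z)$ arises as $\Phi_X$ for a suitable $X \in L_2(\A)$, so that negative coclosedness of $\Sigma$ is equivalent to the internal negative coclosedness of $\Sigma_X$ in each irreducible rank $2$ localization $\Phi_X$. This localizing step is what makes the equivalence purely a rank $2$ phenomenon.

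The final step is the genuinely case-dependent heart of the lemma: comparing internal negative coclosedness with internal compatibility in each irreducible rank $2$ type. In types $A_2$ and $B_2$ I have already observed the two notions agree, so no exceptions arise there. The obstruction is type $G_2$, where the two notions can diverge: a localization $\Sigma_X$ of type $G_2$ may be $2$-locally compatible (equivalently compatible in $G_2$, by Theorem \ref{thm:WC}(1)$\Leftrightarrow$(2) restricted to rank $2$) without being negatively coclosed, and these discrepancies are exactly the seven subsets listed in Definition \ref{def:G2}(\ref{item:CO-WC}). The hard part will be the explicit enumeration in $G_2$: I would list the six positive roots, tabulate which subsets $\Sigma_X$ are compatible (via the two-dimensional alcove geometry, as in Remark \ref{rem:LC-LS}(\ref{item:A2B2})) and which are negatively coclosed (via all decompositions $\alpha = d_1\beta_1 + d_2\beta_2$ with $(\beta_1,\beta_2)<0$ among the $G_2$ roots), and verify that the compatible-but-not-negatively-coclosed subsets are precisely the family $\{\alpha_2\}\cup S$ with $\emptyset \ne S \subseteq \{2\alpha_1+\alpha_2, 3\alpha_1+\alpha_2, 3\alpha_1+2\alpha_2\}$. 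Assembling these pieces yields the stated equivalence: $\Sigma$ is $2$-locally compatible iff every irreducible rank $2$ localization is compatible, iff every such localization is negatively coclosed or (in the $G_2$ case) one of the seven listed exceptions, which is the global negative coclosedness of $\Sigma$ modulo those $G_2$ exceptions.
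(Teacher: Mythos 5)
Your proposal is correct and follows essentially the same route as the paper: localize both conditions to the irreducible rank~$2$ flats, use the coincidence of negative coclosedness and compatibility in types $A_2$ and $B_2$ (Remark \ref{rem:LC-LS}(\ref{item:A2B2})), and settle the $G_2$ case by direct enumeration. One small imprecision: $\Phi_B(\Z)$ for $B=\{\beta_1,\beta_2\}$ need not coincide with the localization $\Phi_X$ for $X=H_{\beta_1}\cap H_{\beta_2}$ (e.g.\ the long-root $A_2$ inside $G_2$ is a $\Phi_B(\Z)$ but not a localization), yet your argument only requires that $\alpha,\beta_1,\beta_2$ lie in the irreducible localization $\Phi_X$, which is automatic, so nothing breaks.
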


\begin{proof} 
We call a subset $\Sigma\subseteq\Phi^+$  \tbf{$2$-locally negatively coclosed} if for any $X \in L_2(\A)$, the localization $\Sigma_X = \Sigma \cap \Phi_X^+$  is  negatively coclosed in $\Phi_X$. 
First observe that $\Sigma$ is  negatively coclosed if and only if it is $2$-locally negatively coclosed.
This is easy to see since the angle between two roots does change after taking localization; hence $(\beta_1,\beta_2)<0$ in $\Phi$  if and only if $(\beta_1,\beta_2)<0$ in $\Phi_X$.  

Suppose $\Phi \ne G_2$. 
Then any rank $2$ irreducible root subsystem $\Phi'$ of $\Phi$ is of type $A_2$ or $B_2$. 
 By Remark \ref{rem:LC-LS}(\ref{item:A2B2}), $\NC=\CO$ in $\Phi'$. 
 Therefore, for any  $\Sigma\subseteq\Phi^+$, 
 $$\Sigma \in \NC \Leftrightarrow \Sigma \mbox{ is $2$-locally negatively coclosed } \Leftrightarrow \Sigma\in \LC.$$
 
For $\Phi = G_2$, the assertion of  Lemma \ref{lem:(2)<=>(3)}  is proved by a direct check in Figure \ref{fig:G2}. 
The main reason why the exceptional cases exist is that for $\Sigma \in \NC$, if a root of the form $\alpha=d_1\alpha_1+d_2(\alpha_1+\alpha_2)$ with $d_1,d_2 \ge 1$ is in $\Sigma$, then either $\alpha_1 $ or $\alpha_1+\alpha_2  $ is in $\Sigma$ since $(\alpha_1,\alpha_1+\alpha_2)<0$. 
In particular, $\{\alpha_2, \alpha\} \notin \NC$. 
However, $\{\alpha_2, \alpha\} $ is compatible since the hyperplane $H_{\alpha_2}^1$ prevents 
the other hyperplanes from having a non-facet intersection with an upper closed alcove. 
\end{proof}

\begin{figure}[!ht]
\centering
    \includegraphics[width=11cm,height=9cm]{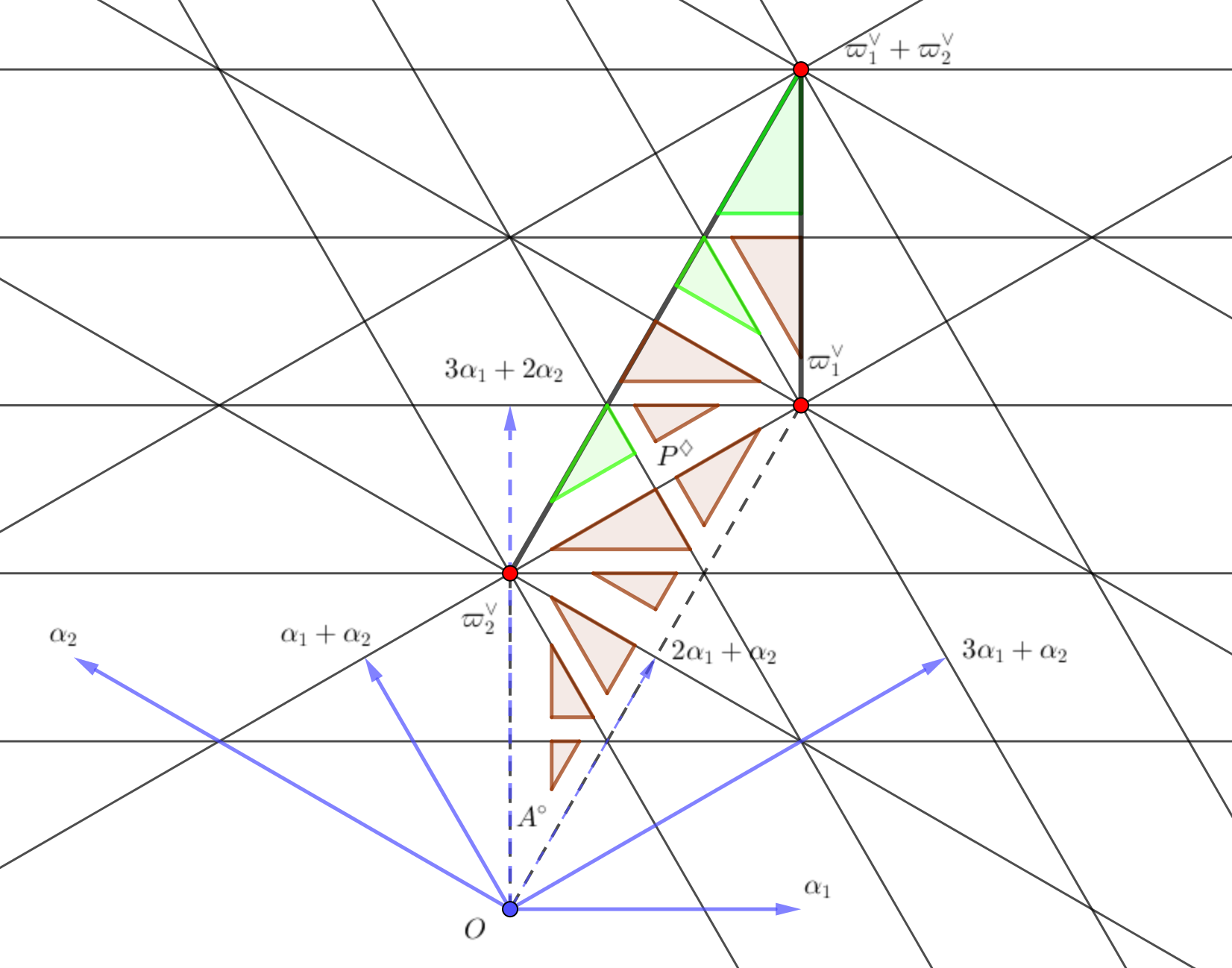}
    \caption{The Worpitzky partition of the fundamental parallelepiped $P^\diamondsuit$ in type $G_2$. 
    A non-facet intersection between an affine hyperplane and an upper closed alcove occurs only at the  alcoves in green. 
    }
    \label{fig:G2}
\end{figure}

\begin{lemma} 
\label{lem:(2)=>(1)} 
If a subset $\Sigma\subseteq\Phi^+$ is $2$-locally compatible, then it is  compatible.
\end{lemma}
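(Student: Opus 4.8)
The plan is to verify the compatibility condition of Definition~\ref{def:compatibleW} one upper closed alcove at a time, reducing each instance to a statement about a root subsystem via Lemma~\ref{lem:ideal-crucial}. If $\rk\Phi \le 2$ there is nothing to prove: the only $X\in L_2(\A)$ with $\Phi_X$ irreducible is $X=\{0\}$, where $\Phi_X=\Phi$ and $\Sigma_X=\Sigma$, so $2$-local compatibility is literally compatibility (this disposes of $\Phi=G_2$). Hence I may assume $\rk\Phi\ge 3$, so in particular $\Phi\ne G_2$, and by Lemma~\ref{lem:(2)<=>(3)} the hypothesis forces $\Sigma$ to be negatively coclosed. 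Now fix an alcove $A\subseteq P^\diamondsuit$ and $\alpha\in\Sigma$, $n_\alpha\in\Z$, with $F:=A^\diamondsuit\cap H_\alpha^{n_\alpha}\ne\emptyset$. Since $H_\alpha^{n_\alpha}$ cannot cut the interior of $A$, the set $F$ is a face of $A^\diamondsuit$, and I must exhibit a ceiling $H_\beta^{n_\beta}$ of $A$ with $\beta\in\Sigma$ and $F\subseteq H_\beta^{n_\beta}$.

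Applying Lemma~\ref{lem:ideal-crucial} to $F$, let $H_{\beta_1}^{n_{\beta_1}},\dots,H_{\beta_m}^{n_{\beta_m}}$ be the ceilings cutting out $F$, so that $B=\{\beta_1,\dots,\beta_m\}$ is a simple system of $\Phi_B(\Z)$ and $\alpha\in\Phi^+\cap\Phi_B(\Z)$. Because these $m$ ceilings are exactly the ones needed, $\alpha$ must have positive coefficient on every $\beta_i$ (if some coefficient vanished, fewer ceilings would already cut out $F$); thus $\alpha$ has full support in $B$, its support is connected, and $\Phi_B(\Z)$ is irreducible. Setting $\Sigma_B:=\Sigma\cap\Phi_B(\Z)$, we have $\alpha\in\Sigma_B\ne\emptyset$, and $\Sigma_B$ is negatively coclosed in $\Phi_B(\Z)$, since any witnessing decomposition inside $\Phi_B(\Z)$ is also one inside $\Phi$ with the same roots and the same negative inner product. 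So the lemma will follow once I establish the key fact below, applied to $(\Phi_B(\Z),\Sigma_B)$: the simple root it produces is some $\beta_i\in\Sigma$, and then $F\subseteq H_{\beta_i}^{n_{\beta_i}}$ is the required lift. (The facet case $m=1$ is subsumed, as there $\Phi_B(\Z)$ has rank one and $\alpha=\beta_1$.)

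The key fact is: \emph{every nonempty negatively coclosed subset of an irreducible root system contains a simple root.} I would prove this by taking $\gamma$ of minimal height in the subset and showing $\gamma$ is simple. Were $\gamma$ non-simple, then since $(\gamma,\gamma)>0$ some simple root $\alpha_i$ in the support of $\gamma$ satisfies $(\gamma,\alpha_i)>0$; put $c:=\langle\gamma,\alpha_i^\vee\rangle\in\{1,2,3\}$. The $\alpha_i$-string through $\gamma$ shows $\beta_1:=\gamma-c\alpha_i\in\Phi^+$ (it is a root on the string below $\gamma$, and it is positive because otherwise $c\alpha_i-\gamma\in\Phi^+$ would carry a negative coefficient on a simple root of the support of $\gamma$ other than $\alpha_i$). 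Then $\gamma=1\cdot\beta_1+c\cdot\alpha_i$ with
$$(\beta_1,\alpha_i)=(\gamma,\alpha_i)-c(\alpha_i,\alpha_i)=-\tfrac{c}{2}(\alpha_i,\alpha_i)<0,$$
so negative coclosedness forces $\beta_1\in\Sigma$ or $\alpha_i\in\Sigma$; as both have strictly smaller height than $\gamma$, this contradicts minimality. Hence $\gamma$ is simple.

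The hard part is exactly the place where negative (rather than ordinary) coclosedness is unavoidable: in a doubly-laced system a positive root can split as a sum of two \emph{orthogonal} positive roots, e.g.\ $2\alpha_1+\alpha_2=\alpha_1+(\alpha_1+\alpha_2)$ in $B_2$, and for such a splitting negative coclosedness yields nothing --- this is precisely the $\CC\subsetneq\NC$ phenomenon recorded in Remark~\ref{rem:LC-LS}(\ref{item:NC-CC}). The purpose of the root-string decomposition $\gamma=\beta_1+c\alpha_i$ with the correct multiplicity $c=\langle\gamma,\alpha_i^\vee\rangle$ is to guarantee the strict inequality $(\beta_1,\alpha_i)<0$ and thereby make negative coclosedness bite. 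Finally, the $G_2$ exceptions of Definition~\ref{def:G2}(\ref{item:CO-WC}) never interfere with this reduction: a rank-two irreducible subsystem $\Phi_B(\Z)$ of $\Phi\ne G_2$ is of type $A_2$ or $B_2$, any higher-rank $\Phi_B(\Z)$ cannot be $G_2$ for rank reasons, and $\Phi=G_2$ itself was already settled as the rank-two base case.
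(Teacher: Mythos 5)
Your argument is correct, but it takes a genuinely different route from the paper's. The paper proves $(2)\Rightarrow(1)$ directly and stays inside the $2$-local framework: from $\alpha=\sum d_i\beta_i$ it peels off one simple root $\beta_k$ of $\Phi_B(\Z)$ via Lemma \ref{lem:reorder}, localizes at the rank-$2$ flat $X=H_{\alpha-\beta_k}\cap H_{\beta_k}$, uses $2$-local compatibility to place a simple root of the $A_2$ or $B_2$ subsystem $\Phi_X$ into $\Sigma_X$, and then iterates a descent (with a separate case analysis when $\Phi_X=B_2$ and neither $\beta_k$ nor $\alpha-\beta_k$ lies in $\Sigma_X$). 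You instead route through condition $(3)$: after disposing of rank $\le 2$ (where the implication is tautological, which correctly quarantines $G_2$), you invoke Lemma \ref{lem:(2)<=>(3)} to replace the hypothesis by negative coclosedness, observe that negative coclosedness passes to $\Sigma\cap\Phi_B(\Z)$ inside $\Phi_B(\Z)$, and then apply a single clean statement --- every nonempty negatively coclosed subset of a root system contains a simple root --- proved by a minimal-height plus $\alpha_i$-string argument whose multiplicity $c=\langle\gamma,\alpha_i^\vee\rangle$ is chosen exactly so that $(\gamma-c\alpha_i,\alpha_i)<0$ and negative coclosedness applies. There is no circularity, since Lemma \ref{lem:(2)<=>(3)} is established independently of Lemma \ref{lem:(2)=>(1)}. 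What your approach buys is the replacement of the paper's iterative descent and its $B_2$ case analysis by a one-shot minimality argument; what it costs is the dependence on the equivalence $(2)\Leftrightarrow(3)$ and the extra (easy but necessary) verifications that $\Phi_B(\Z)$ inherits negative coclosedness and that $\alpha$ has full support on $B$. That last point is the only place where you are terser than I would like: the claim that every $d_i>0$ (because otherwise fewer ceilings would cut out $F$) deserves a sentence noting that $H_\alpha^{n_\alpha}\supseteq\operatorname{aff}(F)=\bigcap_i H_{\beta_i}^{n_{\beta_i}}$ and that dropping a ceiling strictly enlarges the face even inside $A^\diamondsuit$; but this is at the same level of detail as the paper's own assertion that at least two $d_i$ are positive, so I do not count it as a gap.
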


\begin{proof} 
 Let $\Sigma\in \LC$. 
 Suppose that there exist an alcove $A\subseteq P^\diamondsuit$ and a hyperplane 
 $H_{\alpha}^{n_\alpha}$ for $\alpha\in \Sigma,n_\alpha \in \Z$ such that the intersection $A^\diamondsuit \cap H_{\alpha}^{n_\alpha}$ is nonempty and a non-facet of $A^\diamondsuit $. 
 Then we may write $A^\diamondsuit \cap H_{\alpha}^{n_\alpha}= \bigcap_{i=1}^m H_{\beta_i}^{n_{\beta_i}}\cap  A^\diamondsuit$ for $m\ge2, \beta_i\in \Phi^+,n_{\beta_i} \in \Z$ and $H_{\beta_i}^{n_{\beta_i}}$'s are the ceilings of $A$. 
 The assertion is proved once we show that $\beta_i \in \Sigma$ for some $i$. 
Note that by Lemma \ref{lem:ideal-crucial}, $B :=\{\beta_i \mid 1 \le i \le m\}$  is a set of simple roots of $\Phi_{B}(\Z)$, and $\alpha \in \Phi^+ \cap \Phi_{B}(\Z)$. 
In particular, $\alpha = \sum_{i=1}^m d_i\beta_i$ with at least two $d_{i_1}\ge 1$, $d_{i_2}\ge1$. 
Hence by Lemma \ref{lem:reorder}, there exists $1 \le k \le m$ such that $\gamma:=\alpha-\beta_k \in \Phi^+$. 

For non-triviality we may assume that the rank of $\Phi$ is at least $3$. 
Then any rank $2$ irreducible root subsystem   of $\Phi$ is of type $A_2$ or $B_2$. 
Let $X:=H_\gamma \cap H_{\beta_k} \in L_2(\A)$. 
Then $  \Phi_X$ is a  rank $2$ irreducible root subsystem   of $\Phi$ of type $A_2$ or $B_2$ containing $\{\alpha,\beta_k,\gamma\}$. 
Since $\Sigma\in \LC$, the localization  $\Sigma_X$ is compatible in $\Phi_X$.
 By Remark \ref{rem:LC-LS}(\ref{item:A2B2}), $\Sigma_X$ must contain a simple root of $\Phi_X$, i.e.~$\Sigma_X \cap \Delta_X \ne \emptyset$. 
 Since $\alpha =\beta_k+ \gamma$, either  $\beta_k \in \Delta_X$ or $\gamma \in \Delta_X$. 
 If $\beta_k \in \Sigma_X $, then $\beta_k \in \Sigma$ and we are done. 
 If $\gamma \in \Sigma_X $, then we may conclude the proof by repeating the argument above for $  \Sigma \ni \gamma = (d_k-1)\beta_k+  \sum_{i\ne k} d_i\beta_i$ in place of $\alpha$. 
 If $\beta_k, \gamma \notin  \Sigma_X $, then it must happen that $\Phi_X=B_2$, $\alpha$ is the highest root in $\Phi_X^+$, and either $\alpha - 2\beta_k \in \Sigma_X$ or $\alpha - 2\gamma \in  \Sigma_X$. 
 The latter cannot happen because otherwise, $\alpha - 2\gamma  = (2-d_k)\beta_k- \sum_{i\ne k} d_i\beta_i \in \Phi^+ \cap \Phi_{B}(\Z)$. 
 This implies that $\alpha - 2\gamma$ is a positive root in $ \Phi_{B}(\Z)$ but this is a contradiction since there exists $d_i$ with $i\ne k$ such that $d_i \ge 1$. 
 Thus $\gamma':= \alpha - 2\beta_k  \in \Sigma_X$, and we may conclude the proof by repeating the argument above for $ \gamma' = (d_k-2)\beta_k+  \sum_{i\ne k} d_i\beta_i$ in place of $\alpha$. 
\end{proof}

The following technical property of roots is the key ingredient in the proof of the implication $(1) \Rightarrow (2)$ in Theorem \ref{thm:WC}.
 \begin{lemma} 
\label{lem:address} 
Let $\Phi$ be an irreducible root system. 
Let $X\in L_2(\A_{\Phi^+})$ and suppose that  the localization $\Phi_X$ is irreducible. 
Denote  $\Delta_X=\{\gamma_1,\gamma_2 \}$ for distinct $\gamma_1,\gamma_2 \in \Phi^+$. 
Define a map $r=r_X:\Phi^+ \longrightarrow \Z$ inductively on height of positive roots as follows:
   \begin{enumerate}[(i)] 
   \item \label{lem:address1} $ r(\beta)=1$ if $\beta \in \Delta$ or  $\beta \le \gamma_1$ or  $\beta \le \gamma_2$.
   \item  \label{lem:address2} 
For $\beta  \notin \Delta$, $\beta \not\le \gamma_1$,  and $\beta \not\le \gamma_2$, $r(\beta) = r(\beta-\gamma_i)+1$  if $\beta-\gamma_i\in \Phi^+$ for $i=1$ or $2$.
\item \label{lem:address3} 
Otherwise, $r(\beta) = \max\{ r(\beta-\alpha) \mid \alpha \in \Delta \mbox{ is a simple root such that }\beta-\alpha \in \Phi^+ \}$.
\end{enumerate}
Then  $r:\Phi^+ \longrightarrow \Z$ is the address of some alcove $A$. 
\end{lemma}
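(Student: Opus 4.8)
The plan is to verify Shi's criterion (Lemma \ref{lem:Shi}): the map $r$ is the address of an alcove if and only if, for all $\gamma,\gamma',\gamma+\gamma'\in\Phi^+$,
$$r(\gamma)+r(\gamma')-1\le r(\gamma+\gamma')\le r(\gamma)+r(\gamma').$$
Before that I would settle the well-definedness of $r$. Clauses (\ref{lem:address1})--(\ref{lem:address3}) are mutually exclusive and exhaustive, and each evaluates $r(\beta)$ from values at roots of strictly smaller height (note $\mathrm{ht}(\beta-\gamma_i)<\mathrm{ht}(\beta)$, and a non-simple positive root always loses height to some root of $\Delta$, so the maximum in (\ref{lem:address3}) is taken over a nonempty set). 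The only genuine ambiguity sits in clause (\ref{lem:address2}) when both $\beta-\gamma_1$ and $\beta-\gamma_2$ lie in $\Phi^+$, where I must check $r(\beta-\gamma_1)=r(\beta-\gamma_2)$. Since $\gamma_1,\gamma_2$ are the simple roots of the rank-two subsystem $\Phi_X=\Phi\cap X^\perp$, one has $\gamma_1-\gamma_2\notin\Phi$ (otherwise $\gamma_1-\gamma_2\in\Phi\cap X^\perp=\Phi_X$, contradicting simplicity). Hence Lemma \ref{lem:gen-proximity} forces either $\beta=\gamma_1+\gamma_2$, in which case the two reduced roots are $\gamma_2,\gamma_1$, each of $r$-value $1$ by (\ref{lem:address1}), or $\beta-\gamma_1-\gamma_2\in\Phi$, in which case a short induction on height gives the equality.

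With $r$ well-defined, I would isolate two atomic monotonicity facts, both proved by induction on height, that drive the whole argument:
\begin{enumerate}[(a)]
\item for $\alpha\in\Delta$ with $\beta,\beta-\alpha\in\Phi^+$, one has $r(\beta-\alpha)\le r(\beta)\le r(\beta-\alpha)+1$;
\item for $i\in\{1,2\}$ with $\beta,\beta-\gamma_i\in\Phi^+$, one has $r(\beta-\gamma_i)\le r(\beta)\le r(\beta-\gamma_i)+1$, and the upper bound is attained \emph{exactly} when $\beta$ falls under clause (\ref{lem:address2}).
\end{enumerate}
Thus subtracting a simple root of $\Phi$ changes $r$ by $0$ or $1$, while subtracting a simple root $\gamma_i$ of $\Phi_X$ also changes $r$ by $0$ or $1$ and increments it by exactly $1$ in the ``generic'' region. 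These are the only two elementary moves available, and the general Shi inequalities must be assembled from them.

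Finally I would prove the two Shi inequalities by induction on $\mathrm{ht}(\gamma+\gamma')$. The base case, in which one summand is a simple root of $\Phi$, is precisely fact (a). For the inductive step, writing the smaller summand as $\gamma'=\gamma''+\alpha$ with $\alpha\in\Delta$ and $\gamma''\in\Phi^+$, I would apply Lemma \ref{lem:3roots} to the triple $\gamma,\gamma'',\alpha$ to locate a further partial sum among $\gamma+\gamma''$, $\gamma+\alpha$ that remains a root, and Lemma \ref{lem:reorder} to reorder the summands so that every intermediate partial sum stays a root; Lemmas \ref{lem:proximity} and \ref{lem:gen-proximity} decide which differences of $\gamma,\gamma',\gamma_1,\gamma_2$ are roots. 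The main obstacle is the case analysis here, in the doubly-laced types and, separately, in $G_2$. The trouble is that naively chaining facts (a) and (b) along a peeling path is too lossy: a single positive root can be split as $\gamma+\gamma'$ in several ways with summands of different lengths, and one must reconcile a clause-(\ref{lem:address3}) evaluation of $r(\gamma+\gamma')$, obtained by subtracting a $\Phi$-simple root with \emph{no} increment, against values $r(\gamma),r(\gamma')$ produced by $\gamma_i$-subtractions that \emph{do} increment, the two measuring sticks ($\Phi$-simple roots versus $\Phi_X$-simple roots) being different. The resolution is to track precisely, via the equality clause of (b), where each $+1$ occurs; carrying this bookkeeping through the exceptional length patterns of $G_2$ flagged in the lemma's preamble is where the real work lies.
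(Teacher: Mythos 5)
Your setup is sound and matches the paper's: well-definedness via Lemma \ref{lem:gen-proximity} (including the observation that $\gamma_1-\gamma_2\notin\Phi$ and the split into $\delta=\beta-\gamma_1-\gamma_2$ being in $\Phi^-\cup\{0\}$ versus $\Phi^+$) is essentially the paper's argument, and verifying Shi's criterion by induction on height is the right frame. But there is a genuine gap at exactly the point you flag and then defer. The lower bound $r(\gamma)+r(\gamma')-1\le r(\beta)$ in the case where $\beta-\gamma_1\notin\Phi^+$ and $\beta-\gamma_2\notin\Phi^+$ (so $r(\beta)$ is computed by clause (\ref{lem:address3}) with no increment) is the entire difficulty of the lemma, and ``track precisely, via the equality clause of (b), where each $+1$ occurs'' is not a proof. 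As you yourself observe, naively chaining your facts (a) and (b) loses one unit: if $r(\gamma+\alpha)=r(\gamma)$ while $r(\gamma')=r(\gamma'')+1$, the induction applied to $\beta=(\gamma+\alpha)+\gamma''$ only yields $r(\beta)\ge r(\gamma)+r(\gamma')-2$. Nothing in your proposal closes that gap, and it is not closable by local bookkeeping along a single peeling path.

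For comparison, the paper's resolution is an infinite-descent argument. It first proves an auxiliary estimate (for any $\sigma$ covered by $\beta$ and any decomposition $\beta=\gamma+\gamma'$, one has $r(\gamma)+r(\gamma')-2\le r(\sigma)\le r(\gamma)+r(\gamma')$), which already gives the upper bound of the Shi inequality. It then assumes the lower bound fails, i.e.\ $r(\beta)=r(\gamma)+r(\gamma')-2$, calls such a pair $\{\gamma,\gamma'\}$ \emph{bad}, and proves (Claim \ref{cl:shift}, itself a two-subcase argument with an iterative termination step) that every bad pair produces another bad pair $\{\mu,\mu'\}$ with $\mu<\gamma$, contradicting finiteness of $\Phi^+$. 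That descent step is where roughly a page of case analysis lives, and it is the content your proposal is missing. (A minor additional point: the paper's proof does not in fact treat $G_2$ separately in this lemma --- the separate $G_2$ treatment you mention is flagged for Lemma \ref{lem:gen-proximity}, not here --- so organizing the missing case analysis around ``the exceptional length patterns of $G_2$'' would also be the wrong axis.)
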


\begin{proof} 
First we show that the map $r$ is indeed well-defined, i.e.~ $r(\beta)$ is uniquely determined for every $\beta \in \Phi^+$. 
 We argue by an induction on the height ${\rm ht}(\beta)$ of $\beta$. 
 The case $\beta \in \Delta$ is clear. 
 For $\beta \notin \Delta$, it suffices to show if $\beta - \gamma_1 \in \Phi^+$ and $\beta - \gamma_2 \in \Phi^+$, then $r(\beta-\gamma_1)=r(\beta-\gamma_2)$.
By Lemma \ref{lem:gen-proximity}, $\delta:=\beta - \gamma_1 - \gamma_2 \in \Phi \cup\{0\}$. 
If $\delta \in \Phi^- \cup\{0\}$, then $r(\beta-\gamma_1)=r(\beta-\gamma_2)=1$  by condition \ref{lem:address}\eqref{lem:address1}. 
If $\delta \in \Phi^+$, then $r(\beta-\gamma_1)=r(\beta-\gamma_2)=r(\delta)+1$ by condition \ref{lem:address}\eqref{lem:address2}. 
Since $r(\delta)$,  $r(\beta-\gamma_1)$, $r(\beta-\gamma_2)$ are uniquely determined by the induction hypothesis, the conclusion follows.

To show $r$ is the  address of an alcove, we use Lemma \ref {lem:Shi}. 
It suffices to show for any $\beta \in \Phi^+$ and any choice of $\gamma,\gamma'\in \Phi^+$ such that $\beta = \gamma+\gamma'$, the following inequalities hold:
\begin{equation} 
\label{eq:address}
 r(\gamma)+r(\gamma')-1 \le r(\beta) \le r(\gamma)+r(\gamma').
\end{equation} 
 
 We argue by an induction on ${\rm ht}(\beta)$. 
 The assertion is clear if $\beta \in \Delta$ or  $\beta \le \gamma_1$ or  $\beta \le \gamma_2$.
Assume $\beta \notin \Delta$, $\beta \not\le \gamma_1$,  $\beta \not\le \gamma_2$ and let $ \gamma,\gamma'\in \Phi^+$ be such that $\beta = \gamma+\gamma'$. 
Our  induction hypothesis is that  \eqref{eq:address} holds true for every $\delta \in \Phi^+$ with ${\rm ht}(\delta)<{\rm ht}(\beta)$. 
 
 \tbf{Case 1}. First consider the case $\beta-\gamma_i\in \Phi^+$ for $i=1$ or $2$. 
 Fix such $i$ and write $\beta-\gamma_i = \gamma+\gamma'-\gamma_i$. 
 If $\gamma=\gamma_i$ or $\gamma'=\gamma_i$, then \eqref{eq:address} holds true trivially. 
We may assume $\gamma\ne\gamma_i$ and $\gamma'\ne\gamma_i$. 
 By Lemma \ref{lem:3roots}, either $\gamma-\gamma_i \in \Phi$ or $\gamma'-\gamma_i \in \Phi$. 
 Without loss of generality, assume $\gamma-\gamma_i \in \Phi$. 
 If $\gamma-\gamma_i$ is a positive root, then by applying the induction hypothesis to $\beta-\gamma_i= (\gamma-\gamma_i)+\gamma'$ we obtain 
 $$ r(\gamma-\gamma_i)+r(\gamma')-1 \le r(\beta-\gamma_i) \le r(\gamma-\gamma_i)+r(\gamma').$$
 This is equivalent to  \eqref{eq:address} and we are done. 
  If $\gamma-\gamma_i$  is a negative root, then $r(\gamma_i-\gamma)=r(\gamma)=1$ since $\gamma, \gamma_i-\gamma \le \gamma_i$. 
Now apply the induction hypothesis to $\gamma' = (\beta-\gamma_i)+(\gamma_i-\gamma) $ to obtain \eqref{eq:address}. 

Before going to the next case, let us address an observation.

\begin{observation} 
\label{ob:sigma} 
Let $\sigma \in \Phi^+$ be a positive root \tbf{covered by} $\beta$, i.e.~$\beta - \sigma \in \Delta$. 
Then for any choice of $ \gamma,\gamma'\in \Phi^+$ such that $\beta = \gamma+\gamma'$, we have the following estimation for $r(\sigma)$:
$$ r(\gamma)+r(\gamma')-2 \le r(\sigma) \le r(\gamma)+r(\gamma').
$$
In particular, the  estimation above holds true for $r(\beta)$ if $  r(\beta) = r(\sigma) $ for some root $\sigma$ covered by $\beta$.
\end{observation}

\begin{proof} [Proof of Observation \ref{ob:sigma}]
Write $ \alpha :=\beta - \sigma \in \Delta$.
  If $\gamma= \alpha $ or $\gamma'= \alpha $, then the observation holds true trivially. 
We may assume $\gamma\ne \alpha $ and $\gamma'\ne \alpha $. 
Apply  Lemma \ref{lem:3roots} to $\sigma = \gamma+\gamma'-\alpha  \in \Phi^+$ to obtain either $\gamma-\alpha \in \Phi^+$ or $\gamma'-\alpha \in \Phi^+$. 
 Without loss of generality, assume $\gamma-\alpha \in \Phi^+$. 
We apply the induction hypothesis to $\sigma$ and $\gamma$ for the expressions $\sigma = (\gamma-\alpha)+\gamma'$ and $\gamma= (\gamma-\alpha)+\alpha$, and we obtain the desired inequalities. 
\end{proof}
 
 \tbf{Case 2}.  It remains to consider $\beta-\gamma_i\notin \Phi^+$ for $i=1$ and $2$. 
By condition  \ref{lem:address}\eqref{lem:address3}, $r(\sigma) \le r(\beta)$ for any root $\sigma$ covered by $\beta$, and we may choose $\alpha_1 \in \Delta$ such that $\sigma_1:=\beta - \alpha_1  \in \Phi^+$ with $r(\beta)=r(\sigma_1)$. 
By Observation \ref{ob:sigma},
$$ r(\gamma)+r(\gamma')-2 \le r(\beta)=r(\sigma_1) \le r(\gamma)+r(\gamma').
$$
In particular, the upper bound of $r(\beta)$ in  \eqref{eq:address} follows. 

Suppose to the contrary that the lower bound does not hold, i.e.~$ r(\beta) =  r(\gamma)+r(\gamma')-2$. 
Again by Observation \ref{ob:sigma}, if $\sigma$ is any root covered by $\beta$, then
$$  r(\beta) = r(\sigma) =  r(\gamma)+r(\gamma')-2.
$$
This implies that  $\gamma\ne \alpha_1 $ and $\gamma'\ne \alpha_1 $. 
Apply  Lemma \ref{lem:3roots} to $\sigma_1 = \gamma+\gamma'-\alpha_1  \in \Phi^+$ to obtain  either $\gamma-\alpha_1 \in \Phi^+$ or $\gamma'-\alpha_1 \in \Phi^+$. 
 Without loss of generality, assume $\gamma-\alpha_1 \in \Phi^+$. 
 Now apply the induction hypothesis to $\sigma_1 = (\gamma-\alpha_1)+\gamma'$ and $\gamma= (\gamma-\alpha_1)+\alpha_1$ to obtain 
 $$r(\gamma)=r (\gamma-\alpha_1)+1.$$
 
 Let us recollect our assumptions and show the following claim. 
 We will find a contradiction after applying the claim repeatedly. 

\begin{claim} 
\label{cl:shift} 
Recall from the above that $  r(\beta) = r(\sigma) $ for  any root $\sigma$ covered by $\beta$. 
An ordered pair $\{\gamma, \gamma' \}$ of positive roots is called \tbf{bad} for $\beta$ if $\beta= \gamma+\gamma'$, $ r(\beta) =  r(\gamma)+r(\gamma')-2$, and there is $\alpha_1 \in \Delta$ such that $\gamma - \alpha_1  \in \Phi^+$ with $r(\gamma)=r (\gamma-\alpha_1)+1$. 
If  there exists a bad ordered pair $\{\gamma, \gamma' \}$  for $\beta$, then there exists another bad ordered pair $\{\mu, \mu' \}$ for $\beta$ with $\mu<\gamma$. 
\end{claim}

\begin{proof} [Proof of Claim \ref{cl:shift}]
 We consider two cases.
 
  \tbf{Subcase 1}. $\gamma-\gamma_i\notin \Phi^+$ for $i=1$ and $2$. 
  By condition  \ref{lem:address}\eqref{lem:address3}, we may choose $\alpha_2 \in \Delta$ such that $\mu :=\gamma - \alpha_2  \in \Phi^+$ and $r(\gamma)=r(\mu)$. 
  In particular, $ \alpha_1 \ne  \alpha_2$ since $r(\gamma - \alpha_2)\ne r (\gamma-\alpha_1)$. 
  It cannot happen that $\gamma =  \alpha_1+  \alpha_2$; otherwise, $r(\gamma)$ has both values $1$ and $2$, which is absurd. 
  By Lemma \ref{lem:proximity}, $\mu-\alpha_1 =\gamma -  \alpha_1-  \alpha_2  \in \Phi^+$.
By the induction hypothesis, 
  $$r(\gamma)-1 =r (\gamma-\alpha_1) \ge r (\mu-\alpha_1) \ge r (\mu)-1.$$
This follows that
$$r (\mu-\alpha_1) = r (\mu)-1.$$
  See Figure \ref{fig:sc1} on the left for an illustration of these roots in the root poset.
  
  Consider the expression $\beta = \mu + \gamma' + \alpha_2 \in \Phi^+$. 
  If $\beta - \alpha_2 =\mu + \gamma'  \in \Phi^+$, then by applying the induction hypothesis to $\beta - \alpha_2 $ we have that 
  $$ r(\beta) = r(\beta - \alpha_2) \ge r(\mu) + r(\gamma' )-1 = r(\gamma)+r(\gamma')-1.$$
This is a contradiction. 
Hence $\beta - \alpha_2 \notin \Phi^+$, and Lemma \ref{lem:3roots} forces $\mu':= \gamma' + \alpha_2 \in \Phi^+$. 
Use the lower bound of $r(\beta)$ in Observation \ref{ob:sigma} for the expression $\beta = \mu+\mu'$ to obtain
$$  r(\gamma)+r(\gamma')-2 =  r(\beta)  \ge  r(\mu)+r(\mu')-2.$$
Thus $r(\gamma') \ge r(\mu')$. 
However, $r(\gamma') \le r(\mu')$ by the induction hypothesis. 
Therefore, $r(\gamma')= r(\mu')$. 
This follows that $\{\mu, \mu' \}$ is a bad ordered pair for $\beta$ with $\mu<\gamma$ we wanted to find. 
\vskip .5em

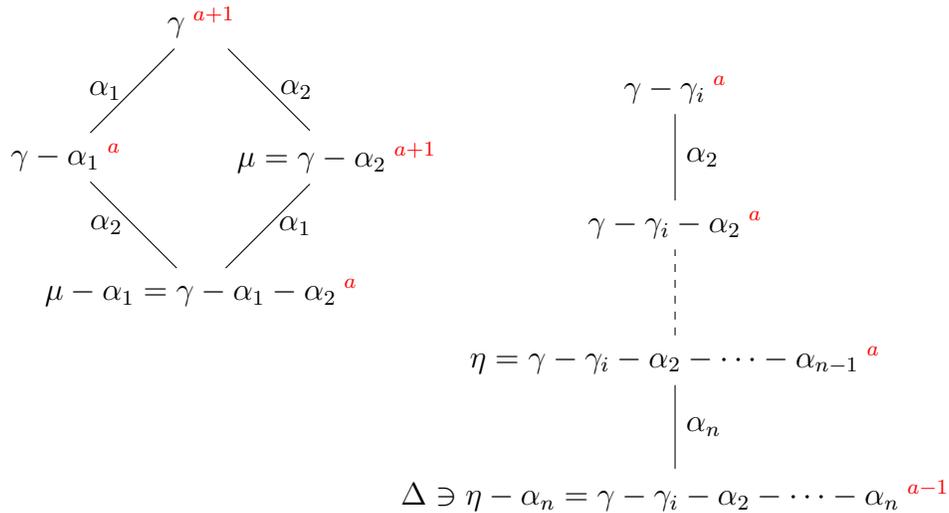
\begin{figure}[htbp!]
   \centering
 \begin{tikzpicture}[scale=.9]
 
  \node (1) at (-2,0) {$\gamma-\alpha_1$ \textcolor{red}{$^{a}$}};
  \node (2) at (2,0) {$\mu=\gamma-\alpha_2 $ \textcolor{red}{$^{a+1}$}};
  \node (0) at (0,2) {$\gamma $ \textcolor{red}{$^{a+1}$}};
   \node (12) at  (0,-2) {$\mu-\alpha_1 = \gamma-\alpha_1-\alpha_2  $ \textcolor{red}{$^{a}$}};
   
    \node (i) at (7,1) {$\gamma-\gamma_i$ \textcolor{red}{$^{a}$}};
        \node (i2) at (7,-1) {$\gamma-\gamma_i-\alpha_2$ \textcolor{red}{$^{a}$}};
        
                        \node (i2n-1) at (7,-3) {$\eta=\gamma-\gamma_i-\alpha_2-\cdots-\alpha_{n-1}$ \textcolor{red}{$^{a}$}};
                        
                \node (i2n) at (7,-5) {$\Delta \ni \eta- \alpha_n = \gamma-\gamma_i-\alpha_2-\cdots-\alpha_n$ \textcolor{red}{$^{a-1}$}};
 
  \draw (1)--(0) node [midway, left] {$\alpha_1$};
    \draw (2)--(12)  node [midway, right] {$\alpha_1$};
    \draw  (0)--(2) node [midway, right] {$\alpha_2$};
    \draw (1)--(12)  node [midway, left] {$\alpha_2$};
    
      \draw (i)--(i2) node [midway, right] {$\alpha_2$};
            \draw[dashed] (i2n-1)--(i2)  ;
                  \draw (i2n-1)--(i2n) node [midway, right] {$\alpha_n$};

\end{tikzpicture}
 \caption{A pictorial illustration of the proof of Claim \ref{cl:shift}. Each root $\alpha$ in the root poset is written next to the evaluation $r(\alpha)$ of the map $r$. The illustration for \tbf{Subcase 1} is on the left, the illustration for \tbf{Subcase 2} is the entire figure.}
\label{fig:sc1}
\end{figure}

  \tbf{Subcase 2}. $\gamma-\gamma_i\in \Phi^+$ for $i=1$ or $2$. 
   Fix such $i$ and write $\beta= (\gamma-\gamma_i)+\gamma'+\gamma_i$. 
Since  $\beta-\gamma_i\notin \Phi^+$, by Lemma \ref{lem:3roots},  $\gamma'+\gamma_i \in \Phi^+$. 
Suppose $\gamma-\gamma_i\in \Delta$. 
Then $r(\gamma)=2$ and $r(\gamma') = r(\beta)$. 
 However, $ r(\beta) = r(\gamma'+\gamma_i) =r(\gamma') +1$ since $\gamma'+\gamma_i$ is covered by $\beta$. 
 This is a contradiction. 
 Thus we may assume $\gamma-\gamma_i\in  \Phi^+ \setminus \Delta$. 
In particular,  ${\rm ht}(\gamma) >2$.
 
 If there exists $\alpha \in \Delta$ such that $\gamma-\gamma_i -\alpha\in \Phi^+$ and $r(\gamma-\gamma_i -\alpha) = r(\gamma)-2$, then we may choose $\{  \gamma-\gamma_i, \gamma'+\gamma_i\}$ as a desired bad ordered pair for $\beta$. 
 If not, by the induction hypothesis for any $\alpha_2 \in \Delta$ such that $\gamma-\gamma_i -\alpha_2\in \Phi^+$, we must have $r(\gamma-\gamma_i -\alpha_2) = r(\gamma)-1$. 
 Fix such an $\alpha_2 \in \Delta$.
If $\beta -\alpha_2\in \Phi^+$, then by applying the induction hypothesis to $\beta -\alpha_2=  (\gamma-\gamma_i -\alpha_2)+(\gamma'+\gamma_i)$ we obtain a contradiction. 
Hence $\beta - \alpha_2 \notin \Phi^+$, and Lemma \ref{lem:3roots} forces $  \gamma' + \gamma_i +\alpha_2 \in \Phi^+$. 
Use the lower bound of $r(\beta)$ in Observation \ref{ob:sigma} for the expression  $\beta=  (\gamma-\gamma_i -\alpha_2)+(\gamma'+\gamma_i +\alpha_2)$  to obtain
$$r( \gamma' + \gamma_i +\alpha_2) =r(\gamma') +1.$$
By a similar reason that $\gamma-\gamma_i\in  \Phi^+ \setminus \Delta$ as above, we also have $\gamma-\gamma_i -\alpha_2\in  \Phi^+ \setminus \Delta$. 

Furthermore, $\alpha_1 \ne \alpha_2$; otherwise,  $r(\gamma-\gamma_i -\alpha_1) = r(\gamma -\alpha_1)-1 =r(\gamma)-2$, which is absurd. 
 Apply  Lemma \ref{lem:3roots} to $\gamma-\gamma_i -\alpha_2\in \Phi^+$ to obtain either $\gamma-\alpha_2 \in \Phi^+$ or $\gamma_i +\alpha_2 \in \Phi^+$. 
 If $\gamma-\alpha_2 \in \Phi^+$, then 
 $$r(\gamma-\alpha_2) = r(\gamma-\gamma_i -\alpha_2)+1=r(\gamma).$$
   By Lemma \ref{lem:proximity}, $\gamma -  \alpha_1-  \alpha_2  \in \Phi^+$ since ${\rm ht}(\gamma) >2$. 
  This leads us to the ``diamond" poset on $4$ elements on the left of Figure \ref{fig:sc1}. 
  By a similar argument as in \tbf{Subcase 1}, we obtain $  \gamma' + \alpha_2 \in \Phi^+$ and we may choose $\{  \gamma-\alpha_2, \gamma'+\alpha_2\}$ as a desired bad ordered pair for $\beta$. 
   
   Suppose $\gamma_i +\alpha_2 \in \Phi^+$ and consider the expression $\beta  =  (\gamma-(\gamma_i +\alpha_2))+(\gamma'+(\gamma_i +\alpha_2 ))$. 
   By repeating the arguments in the preceding three paragraphs with $\gamma_i +\alpha_2$ in place of $\gamma_i$, we can find either a bad ordered pair fulfilling the requirement in  Claim \ref{cl:shift},  or a simple root $\alpha_3 \in \Delta \setminus \{\alpha_1\}$ such that $\beta - \alpha_3 \notin \Phi^+$, $\gamma-\gamma_i -\alpha_2-\alpha_3\in \Phi^+$ with $r(\gamma-\gamma_i -\alpha_2-\alpha_3) = r(\gamma)-1$, $\gamma'+\gamma_i +\alpha_2+\alpha_3\in \Phi^+$ with $r(\gamma'+\gamma_i +\alpha_2+\alpha_3) = r(\gamma')+1$, and $\gamma_i +\alpha_2+\alpha_3\in \Phi^+$. 
   The proof for this fact runs along the lines of the preceding paragraphs. 
   The only places we need further technique are to verify $\alpha_1 \ne \alpha_3$, and  if $\gamma-\alpha_3 \in \Phi^+$ then $r(\gamma-\alpha_3) =r(\gamma)$. 
 If $\alpha_1 = \alpha_3$, then by the induction hypothesis 
   $$r(\gamma-\gamma_i -\alpha_2-\alpha_3) = r(\gamma-\gamma_i -\alpha_2-\alpha_1) \le r(\gamma-\alpha_1)-r(\gamma_i +\alpha_2) +1  =  r(\gamma)-2,$$
   which is absurd.
  If $\gamma-\alpha_3 \in \Phi^+$, then again by the induction hypothesis 
 $$r(\gamma) \ge r(\gamma-\alpha_3) \ge r(\gamma-\gamma_i -\alpha_2-\alpha_3) + r(\gamma_i +\alpha_2) -1=r(\gamma).$$ 
 Hence $r(\gamma-\alpha_3) =r(\gamma)$. 
  
 Repeat for $\gamma_i +\alpha_2+\alpha_3$ in place of  $\gamma_i +\alpha_2 $ and so on. 
 This process of finding bad ordered pairs will have to terminate until we find simple roots $\alpha_2,\ldots,\alpha_{n-1} \in \Delta \setminus \{\alpha_1\}$ for $n \ge2$ such that $\beta - \alpha_j \notin \Phi^+$ for $2 \le j \le n-1$, 
 $\eta:=\gamma-\gamma_i-\alpha_2-\cdots-\alpha_{n-1}\in \Phi^+$ with ${\rm ht}(\eta)=2$ and $r(\eta) = r(\gamma)-1$, 
 $\eta':=\gamma'+\gamma_i +\alpha_2+\cdots+\alpha_{n-1}\in \Phi^+$ with $r(\eta') = r(\gamma')+1$.
 Moreover, there exists $\alpha_n \in \Delta$ such that $\eta- \alpha_n\in \Delta$ and $r(\eta- \alpha_n) = r(\gamma)-2$. 
 At each step of the process we always find a desired bad ordered pair for $\beta$, and $\{\eta,\eta'\}$ is the one in the final step. 
 This concludes the proof of Claim \ref{cl:shift}.
  
\end{proof}

 Now we return to the proof of \tbf{Case 2}. 
By the discussion before Claim \ref{cl:shift}, $\{\gamma, \gamma' \}$ is a bad ordered pair for $\beta$. 
Hence by Claim \ref{cl:shift}, there exist infinitely many mutually distinct bad ordered pairs for $\beta$. 
This is a contradiction since $\Phi^+$ is finite. 
This completes the proof of Lemma \ref{lem:address}.

\end{proof}

\begin{corollary}
\label{cor:addr}   
 Suppose we are in the situation of  Lemma \ref{lem:address}.
 For  $i=1$ or $2$, define a map $r_i:\Phi^+ \longrightarrow \Z$ by $r_i(\gamma_i)=2$ and $r_i(\beta) = r(\beta)$ for all $\beta \ne \gamma_i$. 
 Then   $r_i:\Phi^+ \longrightarrow \Z$ is the address of an alcove. 
 
\end{corollary}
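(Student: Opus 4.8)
The plan is to verify the hypothesis of Shi's criterion (Lemma~\ref{lem:Shi}) for $r_i$, exploiting that $r_i$ is a one-point modification of the address $r$ produced by Lemma~\ref{lem:address}. Concretely, $r_i$ agrees with $r$ except that $r_i(\gamma_i)=r(\gamma_i)+1=2$, so every inequality $r_i(\gamma)+r_i(\gamma')-1\le r_i(\gamma+\gamma')\le r_i(\gamma)+r_i(\gamma')$ with $\gamma,\gamma',\gamma+\gamma'\in\Phi^+$ that does \emph{not} involve $\gamma_i$ already holds because $r$ is an address. First I would therefore enumerate the triples in which $\gamma_i$ appears: either $\gamma_i=\gamma+\gamma'$, or (after possibly swapping $\gamma$ and $\gamma'$) $\gamma=\gamma_i$ and $\beta:=\gamma_i+\gamma'\in\Phi^+$ with $\gamma'\ne\gamma_i$. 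The degenerate possibility $\gamma=\gamma'=\gamma_i$ is excluded since $2\gamma_i\notin\Phi$.

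The case $\gamma_i=\gamma+\gamma'$ is routine: both summands satisfy $\gamma,\gamma'<\gamma_i$, hence $\gamma,\gamma'\le\gamma_i$, so rule~\ref{lem:address}\eqref{lem:address1} gives $r(\gamma)=r(\gamma')=1$, and the inequality reduces to $1\le r_i(\gamma_i)=2\le 2$. The real content is the case $\beta=\gamma_i+\gamma'$, where the target inequality becomes $r(\gamma')+1\le r(\beta)\le r(\gamma')+2$. The upper bound is free: since $r$ is an address, $r(\beta)\le r(\gamma_i)+r(\gamma')=r(\gamma')+1$. Thus the whole statement collapses to the single identity $r(\gamma_i+\gamma')=r(\gamma')+1$.

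To establish this identity I would appeal directly to the recursive definition of $r$. As $\beta=\gamma_i+\gamma'$ is a sum of two positive roots, $\mathrm{ht}(\beta)\ge 2$ and so $\beta\notin\Delta$; moreover $\beta-\gamma_i=\gamma'\in\Phi^+$. Hence, as soon as one knows that $\beta$ escapes rule~\ref{lem:address}\eqref{lem:address1}, i.e.\ that $\beta\not\le\gamma_1$ and $\beta\not\le\gamma_2$, rule~\ref{lem:address}\eqref{lem:address2} fires and delivers $r(\beta)=r(\beta-\gamma_i)+1=r(\gamma')+1$, as wanted. Verifying $\beta\not\le\gamma_1,\gamma_2$ is the step I expect to be the main obstacle. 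Since $\gamma'\in\Phi^+$ forces $\beta>\gamma_i$, the relation $\beta\le\gamma_i$ is impossible, and $\beta\le\gamma_j$ for the other index $j$ would give $\gamma_i<\beta\le\gamma_j$, hence $\gamma_i<\gamma_j$ in the root poset. So everything hinges on the mutual position of the two simple roots of the rank-two localization $\Phi_X$: when $\gamma_1,\gamma_2$ are incomparable the obstruction vanishes and both $r_1$ and $r_2$ are addresses, whereas in the comparable configurations one is forced to take $\gamma_i$ to be the \emph{larger} simple root, so that $\beta>\gamma_i$ can never sit below $\gamma_j$. I would package this as a short lemma about $\Delta_X=\{\gamma_1,\gamma_2\}$, fed by the fact (already used via Lemma~\ref{lem:gen-proximity}) that $\gamma_1-\gamma_2\notin\Phi$.

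Geometrically this is reassuring: the identity $r(\gamma_i+\gamma')=r(\gamma')+1$ asserts that crossing the hyperplane $H_{\gamma_i}^{1}$ changes no address other than that of $\gamma_i$, so $r_i$ is realized by the alcove obtained from $A$ by reflection in that hyperplane, and the obstacle above is precisely the assertion that $H_{\gamma_i}^{1}$ supports a facet of $A$.
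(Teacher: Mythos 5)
Your reduction to the triples involving $\gamma_i$, and your handling of the case $\gamma_i=\gamma+\gamma'$, coincide with the paper's (very terse) proof, which at that point simply declares the remaining verification ``straightforward from the definition of the map $r$.'' You have correctly isolated why it is not: for $\beta=\gamma_i+\gamma'$ the needed lower bound is $r(\beta)\ge r(\gamma')+1$, which you want to extract from rule \ref{lem:address}\eqref{lem:address2}, and that rule only applies when $\beta\not\le\gamma_j$ for the other index $j$. The gap is that your proposed remedy --- a lemma saying $\gamma_1,\gamma_2$ are incomparable in the root poset, deduced from $\gamma_1-\gamma_2\notin\Phi$ --- is false. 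Take $\Phi=B_3$ with $\alpha_1=\epsilon_1-\epsilon_2$, $\alpha_2=\epsilon_2-\epsilon_3$, $\alpha_3=\epsilon_3$, and let $X$ be the flat with $\Phi_X^+=\{\epsilon_2-\epsilon_3,\ \epsilon_1+\epsilon_3,\ \epsilon_1+\epsilon_2\}$ (an irreducible $A_2$ localization). Then $\gamma_1=\epsilon_2-\epsilon_3=\alpha_2$ and $\gamma_2=\epsilon_1+\epsilon_3=\alpha_1+\alpha_2+2\alpha_3$, so $\gamma_2-\gamma_1=\alpha_1+2\alpha_3$ and $\gamma_1<\gamma_2$, even though $\gamma_1-\gamma_2\notin\Phi$.

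Worse, in this example the conclusion itself fails for $i=1$, so no argument can close the gap for the statement as written. Indeed $\beta:=\gamma_1+\alpha_1=\epsilon_1-\epsilon_3$ satisfies $\beta\le\gamma_2$, so rule \ref{lem:address}\eqref{lem:address1} forces $r(\beta)=1$, while Lemma \ref{lem:Shi} applied to $\beta=\gamma_1+\alpha_1$ would require $r_1(\beta)\ge r_1(\gamma_1)+r_1(\alpha_1)-1=2$. Geometrically, the alcove with address $r$ here is the reflection of the fundamental alcove across $H_{\epsilon_1+\epsilon_2}^{1}$; its only ceiling is $H_{\gamma_2}^{1}$, and $H_{\gamma_1}^{1}$ meets its closure only in the vertex $\varpi_2^\vee=\epsilon_1+\epsilon_2$, supporting no facet. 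So your closing remark that one is ``forced to take $\gamma_i$ to be the larger simple root'' is exactly right, but that yields the corollary for only one of the two indices, whereas the statement (and its use in Lemma \ref{lem:(1)=>(2)}, where both $H_{\gamma_1}^{1}$ and $H_{\gamma_2}^{1}$ are claimed to be ceilings of $A$) needs both. In short, you have put your finger on a genuine difficulty that the paper's two-line proof does not address, but your proposed incomparability lemma is not available, and the obstruction you identified is an actual counterexample to the corollary as stated rather than a removable technicality.
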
 
\begin{proof} 
We use Lemma \ref {lem:Shi}. 
We need to show for any $\beta \in \Phi^+$ and any choice of $\gamma,\gamma'\in \Phi^+$ such that $\beta = \gamma+\gamma'$, the following holds
$$
 r_i(\gamma)+r_i(\gamma')-1 \le r_i(\beta) \le r_i(\gamma)+r_i(\gamma').
$$
It suffices to show the above when $\gamma_i$ is involved, i.e.~$\gamma_i=\beta$ or  $\gamma_i= \gamma$ or $\gamma_i= \gamma'$. 
The rest is straightforward from the definition of the map $r$.
\end{proof}

\begin{lemma} 
\label{lem:(1)=>(2)} 
If a subset $\Sigma\subseteq\Phi^+$ is compatible, then it is $2$-locally compatible.
\end{lemma}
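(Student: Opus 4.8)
The plan is to prove the contrapositive one localization at a time, using the explicit alcove supplied by the address construction. If $\ell\le 2$ the only $X\in L_2(\A)$ is the origin, where $\Phi_X=\Phi$ and $\Sigma_X=\Sigma$, so the claim is immediate; hence I assume $\ell\ge 3$, so that every irreducible $\Phi_X$ with $X\in L_2(\A)$ is of type $A_2$ or $B_2$. Fix such an $X$, write $\Delta_X=\{\gamma_1,\gamma_2\}$, and suppose toward a contradiction that $\Sigma_X$ is \emph{not} compatible in $\Phi_X$ while $\Sigma$ is compatible in $\Phi$. By Remark~\ref{rem:LC-LS}(\ref{item:A2B2}), non-compatibility in type $A_2$ or $B_2$ forces $\Sigma_X\ne\emptyset$ and $\Sigma_X\cap\Delta_X=\emptyset$; in particular $\gamma_1,\gamma_2\notin\Sigma$, and I may fix a non-simple root $\alpha=d_1\gamma_1+d_2\gamma_2\in\Sigma_X$ with $d_1,d_2\ge 1$.

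The core of the proof is to manufacture a single alcove that exhibits a forbidden intersection for $\Sigma$. Applying Lemma~\ref{lem:address} to $X$ yields an alcove $A$ with address $r=r_X$. Since $r(\alpha_i)=1$ for every $\alpha_i\in\Delta$, we have $0<(\alpha_i,x)<1$ on $A$, so $A\subseteq P^\diamondsuit$. As $r(\gamma_1)=r(\gamma_2)=1$, the hyperplanes $H_{\gamma_1}^1$ and $H_{\gamma_2}^1$ bound $A$ from above in the $\gamma_i$-directions, and Corollary~\ref{cor:addr}---whose addresses $r_1,r_2$ describe the alcoves adjacent to $A$ across these hyperplanes---certifies that $H_{\gamma_1}^1$ and $H_{\gamma_2}^1$ are genuine walls of $A$. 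Since the origin (value $0$) lies on the same side as $A$, both are ceilings of $A$.

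Next I would identify the intersection. Set $n_\alpha:=d_1+d_2$. On $\overline{A}$ we have $(\gamma_i,x)\le 1$, hence $(\alpha,x)=d_1(\gamma_1,x)+d_2(\gamma_2,x)\le d_1+d_2=n_\alpha$, with equality (because $d_1,d_2\ge 1$) exactly when $(\gamma_1,x)=(\gamma_2,x)=1$. Therefore $H_{\alpha}^{n_\alpha}\cap\overline{A}=H_{\gamma_1}^1\cap H_{\gamma_2}^1\cap\overline{A}=:F$, the common $(\ell-2)$-face of the two ceiling facets of the $\ell$-simplex $\overline{A}$. Two distinct facets of a simplex always meet in a nonempty $(\ell-2)$-face, so $F\ne\emptyset$, $F\subseteq A^\diamondsuit$, and $A^\diamondsuit\cap H_{\alpha}^{n_\alpha}=F$ is a nonempty \emph{non-facet} intersection with $\alpha\in\Sigma$.

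To reach the contradiction it remains to show $F$ lies in no ceiling indexed by $\Sigma$. Here I would use a dimension count: $F$ affinely spans the $(\ell-2)$-flat $H_{\gamma_1}^1\cap H_{\gamma_2}^1$, so any hyperplane $H_{\beta}^{n_\beta}$ containing $F$ must contain this flat and hence belong to the pencil through it, forcing $\beta\in\mathrm{span}(\gamma_1,\gamma_2)=X^\perp$ and thus $\beta\in\Phi\cap X^\perp=\Phi_X$. For a non-simple $\beta\in\Phi_X^+$ the same convexity computation gives $H_{\beta}^{n_\beta}\cap\overline{A}=F$, a codimension-$2$ set, so $H_{\beta}^{n_\beta}$ supports no facet and is not a ceiling; consequently the only ceilings of $A$ containing $F$ are $H_{\gamma_1}^1$ and $H_{\gamma_2}^1$ (equivalently, the defining ceilings furnished by Lemma~\ref{lem:ideal-crucial} are exactly $\{\gamma_1,\gamma_2\}$). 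Since $\gamma_1,\gamma_2\notin\Sigma$, the nonempty $F=A^\diamondsuit\cap H_{\alpha}^{n_\alpha}$ cannot be lifted to a ceiling indexed by $\Sigma$, contradicting compatibility of $\Sigma$. I expect the genuine difficulty to be the construction step---securing an alcove inside $P^\diamondsuit$ that simultaneously has both $H_{\gamma_1}^1$ and $H_{\gamma_2}^1$ as ceilings even though $\gamma_1,\gamma_2$ need not be simple in $\Phi$---which is exactly what Lemma~\ref{lem:address} and Corollary~\ref{cor:addr} are built to provide; once the alcove and its ceiling structure are in hand, the convexity and pencil arguments are routine.
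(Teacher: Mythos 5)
Your proof is correct and follows essentially the same route as the paper's: reduce via Remark~\ref{rem:LC-LS}(\ref{item:A2B2}) to showing a non-simple $\alpha=d_1\gamma_1+d_2\gamma_2\in\Sigma_X$ is impossible, use Lemma~\ref{lem:address} and Corollary~\ref{cor:addr} to produce an alcove $A\subseteq P^\diamondsuit$ having both $H_{\gamma_1}^1$ and $H_{\gamma_2}^1$ as ceilings, and observe that $A^\diamondsuit\cap H_{\alpha}^{d_1+d_2}$ is the common $(\ell-2)$-face of those two facets, which no ceiling indexed by $\Sigma$ can absorb. The only (cosmetic) difference is that you identify this face by a direct convexity and pencil computation, where the paper appeals to Lemma~\ref{lem:ideal-crucial} together with the fact that a proper face of a polytope is the intersection of the facets containing it.
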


\begin{proof} 
 For non-triviality we may assume that the rank of $\Phi$ is at least $3$. 
Then any rank $2$ irreducible root subsystem   of $\Phi$ is of type $A_2$ or $B_2$. 
Let $X\in L_2(\A_{\Phi^+})$ and suppose that $\Phi_X$ is irreducible. 
Denote  $\Delta_X=\{\gamma_1,\gamma_2 \}$ for distinct $\gamma_1,\gamma_2 \in \Phi^+$. 
We need to show that  the localization $\Sigma_X=\Sigma \cap \Phi^+_X$ is compatible in $\Phi_X$. 
Since $\Phi_X=A_2$ or $B_2$, by Remark \ref{rem:LC-LS}(\ref{item:A2B2}), this is equivalent to showing that $\Sigma_X= \emptyset$ or $\Sigma_X \cap \Delta_X \ne \emptyset$.
Suppose not, then there exists $\beta \in \Sigma_X $ such that $\beta = d_1\gamma_1+d_2\gamma_2$ for $d_1,d_2 \in \Z_{> 0}$. 

Let $r=r_X:\Phi^+ \longrightarrow \Z$ be the map defined by $X$ from Lemma \ref{lem:address}, and $r_i: \Phi^+ \longrightarrow \Z$ for $i=1, 2$ be  the map   from Corollary \ref{cor:addr}. 
Fix $i$.
Let $A, A_i$ be the alcoves with addresses $r,r_i$, respectively. 
It is easily seen that $A \subseteq P^\diamondsuit$. 
Comparing the addresses of $A$ and $A_i$ implies that $H_{\gamma_i}^{1}$ is a wall of each alcove and separates these two. 
Hence $H_{\gamma_i}^{1}$ is a ceiling of $A$ for each $i=1, 2$.
We claim that 
$$A^\diamondsuit  \cap H_{\gamma_1}^{1}\cap H_{\gamma_2}^{1}  = A^\diamondsuit  \cap H_\beta^{d_1+d_2} .$$

Since any two facets of a simplex are adjacent, the intersection $R:=A^\diamondsuit  \cap H_{\gamma_1}^{1}\cap H_{\gamma_2}^{1}$ is a nonempty face of $A^\diamondsuit$. 
Moreover, it is contained in the face $Q:=A^\diamondsuit  \cap H_\beta^{d_1+d_2}$ of $A^\diamondsuit$ by the definition of $\beta$. 
In particular, $Q$ is not empty. 
Since any proper face of a polytope is the intersection of all facets containing it, if $Q=\bigcap_{\delta \in D} H_{\delta}^{n_\delta}\cap  A^\diamondsuit$ where each $ H_{\delta}^{n_\delta}$ is a ceiling of $A^\diamondsuit$ and $D$ is a set of positive roots, then $D\subseteq\{\gamma_1,\gamma_2 \}$ and $n_\delta=1$ for all $\delta \in D$. 
Applying Lemma \ref{lem:ideal-crucial} to the face $Q$ implies that $D=\{\gamma_1,\gamma_2 \}$ and $R=Q$. 

Since $\beta \in \Sigma$, by the compatibility of $\Sigma$, either $\gamma_1 \in \Sigma$ or  $\gamma_2 \in \Sigma$. 
Hence $\Sigma_X \cap \Delta_X \ne \emptyset$, a contradiction. 
This completes the proof. 
\end{proof}

\section{Proof of the second main result: Theorem \ref{thm:SF-characterize}}
\label{sec:proof2}

 First we recall some freeness properties of Weyl arrangements. 
 
 \begin{lemma} {\cite[Theorem 2]{AY09}}
\label{lem:shift-iso} 
 Let $k\in \Z_{>0}$, 
  Let $\A$ be the Weyl arrangement of an irreducible root system $\Phi$, and $m : \A \longrightarrow \{0, 1\}$ be a multiplicity. Then there exists an isomorphism of $S$-modules 
  $$D(\A,m) \longrightarrow  D(\A,2k + m).$$
  Here $2k + m$ means the  multiplicity that sends any $H\in\A$ to $2k+m(H)$.
\end{lemma}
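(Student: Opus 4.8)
The plan is to reduce to the single shift by $2$ and then iterate. Writing $2k+m = 2 + \bigl(2(k-1)+m\bigr)$ and composing isomorphisms, it suffices to produce, for an \emph{arbitrary} multiplicity $m\colon\A\to\Z_{\ge 0}$, an $S$-module isomorphism $D(\A,m)\xrightarrow{\sim} D(\A,m+2)$, where $m+2$ sends $H\mapsto m(H)+2$; applying this $k$ times (the intermediate multiplicities $2j+m$ need no longer be $\{0,1\}$-valued, which is why I would prove the shift for general $m$) yields the claim. So I fix $m$ and concentrate on the shift by $2$.

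The engine I would use is K.~Saito's \emph{primitive derivation} $D$ together with the associated flat (Saito--Terao) connection $\nabla$ on $\Der(S)$ over the invariant ring $R=S^W=\K[P_1,\dots,P_\ell]$, where $P_1,\dots,P_\ell$ are basic invariants with $P_\ell$ of maximal degree and $D=\partial/\partial P_\ell$. The primitive derivation is homogeneous of degree $-\h$ and is tangent to each reflection hyperplane to precisely the order that makes covariant differentiation along $D$ change the order of contact of a logarithmic derivation along every $H_\alpha$ by exactly $2$. I would verify this first in rank one (a direct computation shows $\nabla_D$ sends $\alpha^{\,m(H)+2}\partial$ to a scalar multiple of $\alpha^{\,m(H)}\partial$) and then localize to codimension two, where by Weyl-group transitivity on hyperplanes only the finitely many rank-two types $A_1^2, A_2, B_2, G_2$ occur. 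This local tangency analysis, showing that $\nabla_D$ carries $D(\A,m+2)$ into $D(\A,m)$ while its formal inverse $\nabla_D^{-1}$ (on the positively graded part, where $\nabla_D$ is bijective) carries $D(\A,m)$ back into $D(\A,m+2)$, lowering and raising polynomial degree by $\h$, is the computational core.

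The delicate point, and what I expect to be the main obstacle, is that $\nabla_D$ is \emph{not} $S$-linear: since it differentiates coefficients, $\nabla_D(f\theta)=(Df)\theta+f\,\nabla_D\theta$, so the correspondence it provides between $D(\A,m+2)$ and $D(\A,m)$ is a priori only additive, not a morphism of $S$-modules. To upgrade it to the asserted $S$-module isomorphism I would exploit that both $D(\A,m)$ and $D(\A,m+2)$ are reflexive $S$-modules: it is enough to build a compatible isomorphism on the complement of a codimension-two subset (away from the pairwise intersections of the $H_\alpha$ and the singular locus) and then extend it uniquely by the Hartogs/$S_2$ property of reflexive modules. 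On this codimension-one locus each hyperplane is locally a single coordinate hyperplane, where the shift-by-$2$ isomorphism is transparent but depends on a choice of normal direction; the role of the primitive derivation and of the rank-two localizations is exactly to furnish a canonical, globally consistent normalization of these local identifications, guaranteeing that they glue to a single degree-$\h$ $S$-linear map. Checking that this connection-normalized identification descends to an honest $S$-module isomorphism — rather than the merely additive bijection supplied by $\nabla_D$ — is where the real work lies.
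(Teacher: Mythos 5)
The paper offers no proof of this lemma to compare against: it is quoted wholesale from \cite[Theorem 2]{AY09}. Judged on its own terms, your proposal has a genuine gap right at the start. You reduce to proving $D(\A,m)\simeq D(\A,m+2)$ for an \emph{arbitrary} multiplicity $m:\A\to\Z_{\ge 0}$ and then iterate. But the hypothesis $m(H)\in\{0,1\}$ is not a convenience to be discarded; it is exactly what makes the construction work, and the shift-by-$2$ isomorphism is not available for general $m$. Concretely, the $S$-linear map in the Abe--Yoshinaga proof is $\theta\mapsto\nabla_\theta\xi_k$ with $\xi_k:=\nabla_D^{-k}E\in D(\A,2k+1)$ held fixed. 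Writing $\xi_k(\alpha_H)=\alpha_H^{2k+1}f_H$, the Leibniz rule gives $(\nabla_\theta\xi_k)(\alpha_H)=(2k+1)\alpha_H^{2k}\theta(\alpha_H)f_H+\alpha_H^{2k+1}\theta(f_H)$; the second summand lies only in $\alpha_H^{2k+1}S$, so the image lands in $D(\A,2k+m)$ precisely because $m(H)\le 1$. For $m(H)\ge 2$ the map fails to preserve the tangency conditions, and the isomorphism $D(\A,m)\simeq D(\A,m+2)$ is not expected for general $m$ --- this failure is why quasi-constant multiplicities are singled out in \cite{AY09} in the first place. Your intermediate multiplicities $2j+m$ do remain quasi-constant, but then each single step is essentially the full theorem and the induction buys nothing.

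The second problem is the mechanism you propose for producing an $S$-linear map. The resolution is not to patch together local identifications on the codimension-one locus and extend by reflexivity; it is that $\nabla$ is a connection, hence $\nabla_{f\theta}\xi=f\,\nabla_\theta\xi$, so one obtains $S$-linearity for free by fixing $\xi_k$ and letting the \emph{direction} $\theta$ vary. (Constructing $\xi_k$ does require inverting $\nabla_D$, but only as a bijection between specific invariant submodules in Yoshinaga's sense --- it is not invertible on all of positively graded $\Der(S)$, as your sketch suggests.) Once the map is $S$-linear by construction, the remaining work is the local tangency computation above together with injectivity and surjectivity, checked at generic points of each hyperplane using $f_H\notin\alpha_H S$ and reflexivity. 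Your gluing plan leaves the global consistency of the local normalizations --- which you yourself identify as ``where the real work lies'' --- entirely unaddressed, so the central step of the argument is missing rather than merely sketched.
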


\begin{lemma} {\cite[Claim in the proof of Theorem 1.6]{AT16}}
\label{lem:2-dim} 
Let $\Phi$ be an irreducible root system of rank $2$, and $\Sigma\subseteq\Phi^+$. 
Then $\Sigma$ is Shi-free, i.e.~the cone $\cc\scS^k_{\Sigma}$ is free for every $k>0$ if and only if $\Sigma=\emptyset$ or $\Sigma \cap \Delta \ne \emptyset$, equivalently, $\Sigma$ is $2$-locally simple.
 
\end{lemma}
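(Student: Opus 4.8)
The plan is to study the rank-three cone $\cc\scS^k_\Sigma \subseteq \R^3$ through Yoshinaga's criterion (Theorem \ref{thm:Yoshinaga's criterion}), taking for $H$ the hyperplane at infinity $H_\infty = \{z=0\}$. Writing $N := |\Phi^+|$, each positive root $\alpha$ contributes to $\cc\scS^k_\Sigma$ a pencil of planes that all meet $H_\infty$ in one and the same line, so the Ziegler restriction of $\cc\scS^k_\Sigma$ along $H_\infty$ is the planar Weyl arrangement $\A_{\Phi^+}$ equipped with the multiplicity $m_\Sigma(\alpha)=2k$ for $\alpha \notin \Sigma$ and $m_\Sigma(\alpha)=2k+1$ for $\alpha \in \Sigma$. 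As a multiarrangement in dimension two it is automatically free \cite[Corollary 7]{Z89}. Hence, since the Ziegler restriction is free, Theorem \ref{thm:Yoshinaga's criterion} in the rank-three case tells us that $\cc\scS^k_\Sigma$ is free if and only if $\chi(\cc\scS^k_\Sigma,t) = (t-1)(t-d_1)(t-d_2)$, where $(d_1,d_2) := \exp(\A_{\Phi^+},m_\Sigma)$.

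Next I would compute $(d_1,d_2)$ using the shift isomorphism of Lemma \ref{lem:shift-iso}. Writing $m^0(\alpha)=1$ for $\alpha\in\Sigma$ and $0$ otherwise, the $2k$-shift gives $\exp(\A_{\Phi^+},m_\Sigma) = \exp(\A_{\Phi^+},m^0) + (kN,kN)$; since $m^0$ presents the $|\Sigma|$ lines indexed by $\Sigma$ as a simple planar arrangement, its exponents are $(0,0)$ when $\Sigma=\emptyset$ and $(1,|\Sigma|-1)$ otherwise. Thus $(d_1,d_2)=(kN,kN)$ or $(kN+1,kN+|\Sigma|-1)$. Using $\chi(\cc\A,t)=(t-1)\chi(\A,t)$ and writing $\chi(\scS^k_\Sigma,t)=t^2 - |\scS^k_\Sigma|\,t + b_2$ with $b_2=\sum_p (m_p-1)$ summed over the intersection points $p$ of the planar arrangement $\scS^k_\Sigma$ (here $m_p$ is the number of its lines through $p$), the identity $d_1+d_2 = 2kN+|\Sigma| = |\scS^k_\Sigma|$ holds automatically, so the entire lemma reduces to the single equality $b_2(\scS^k_\Sigma)=d_1 d_2$.

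Finally I would evaluate $b_2$ by starting from the extended Shi arrangement $\scS^k_\emptyset$, for which $\chi(\scS^k_\emptyset,t)=(t-k\h)^2$ by Remark \ref{rem:quasi-Shi}; as $\h=N$ in rank two this gives $b_2(\scS^k_\emptyset)=(kN)^2=d_1 d_2$, recovering the freeness of the Shi arrangement. Adjoining the lines $H_\alpha^{-k}$ for $\alpha\in\Sigma$ and applying deletion-restriction, each new line $L=H_\alpha^{-k}$ raises $b_2$ by the number $c_L$ of distinct points in which $L$ meets the arrangement built so far. The content of the lemma is that $\sum_L c_L = d_1 d_2 - (kN)^2$ precisely when $\Sigma$ is $2$-locally simple. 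The governing local fact is that a line $H_\alpha^{-k}$ with $\alpha$ simple meets the Shi lines in the maximal number of triple concurrences, whereas one with $\alpha$ non-simple admits strictly fewer concurrences, which inflates $c_L$ and pushes $b_2$ above $d_1 d_2$. This is already transparent in type $A_2$: adjoining $H_{\alpha_1}^{-k}$ yields $k$ concurrences and $c_L=3k$, keeping $b_2=9k^2+3k=d_1 d_2$, while adjoining $H_{\alpha_1+\alpha_2}^{-k}$ yields only $k-1$ concurrences and $c_L=3k+1$, forcing $b_2=9k^2+3k+1 > d_1 d_2$ and hence non-freeness.

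The main obstacle is this last bookkeeping: organizing the intersection count so that it isolates exactly the concurrences lost when $\Sigma$ avoids every simple root, while correctly accounting for the mutual intersections among the adjoined lines when $|\Sigma|>1$, and carrying this out uniformly for $A_2$, $B_2$ and $G_2$. I expect $G_2$ to be the delicate case, both because of the size of its root system and because the long/short dichotomy changes which concurrences survive; once the reduction to $b_2=d_1 d_2$ is in place, however, each type is a finite and explicit verification.
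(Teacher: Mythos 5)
Your reduction is sound and is essentially the route one would take (the paper itself does not prove this lemma; it only cites the Claim in the proof of Theorem 1.6 of Abe--Terao): pass to the cone, take the Ziegler restriction onto $H_\infty$, identify it with the Weyl multiarrangement $(\A_{\Phi^+},2k+m^0)$, use the Abe--Yoshinaga shift to get $(d_1,d_2)=\exp(\A_{\Phi^+},m^0)+(k\h,k\h)$ with $\h=N$ in rank two, and invoke Yoshinaga's three-dimensional criterion to reduce freeness to the single equality $b_2(\scS^k_\Sigma)=d_1d_2$ (with the local-global inequality $b_2\ge d_1d_2$ guaranteeing that any discrepancy certifies non-freeness). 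One small imprecision: the version of Theorem \ref{thm:Yoshinaga's criterion} stated in the paper degenerates for $\ell=3$ (the only $X\in L_3$ contained in $H_\infty$ is the origin, whose localization is the whole arrangement), so you should cite Yoshinaga's original three-dimensional criterion, which is exactly the $b_2=d_1d_2$ form you use.

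The genuine gap is that the decisive verification is not carried out, and the heuristic you offer to carry it out is not correct as stated. You claim that a line $H_\alpha^{-k}$ with $\alpha$ simple realizes the maximal number of triple concurrences while a non-simple $\alpha$ realizes strictly fewer, "which inflates $c_L$ and pushes $b_2$ above $d_1d_2$." As a per-line monotonicity statement this fails for $|\Sigma|\ge 2$: for instance in $B_2$ the set $\Sigma=\{\alpha_2,\,2\alpha_1+\alpha_2\}$ contains the simple root $\alpha_2$ and hence must be free, even though it also contains the non-simple root $2\alpha_1+\alpha_2$; the deficit of concurrences of $H_{2\alpha_1+\alpha_2}^{-k}$ with the Shi lines must be exactly compensated by new concurrences involving the previously adjoined line $H_{\alpha_2}^{-k}$ (e.g.\ $H_{2\alpha_1+\alpha_2}^{-k}\cap H_{\alpha_2}^{-k}\cap H_{\alpha_1}^{0}$). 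So the count $\sum_L c_L=(kN+1)|\Sigma|-1$ genuinely depends on the whole set $\Sigma$, not on each root separately, and one must run the concurrence count, as a function of $k$, over all subsets of $\Phi^+$ (up to diagram symmetry) for each of $A_2$, $B_2$, $G_2$. Your $A_2$ computations for $|\Sigma|=1$ check out, but the cases that carry the content of the lemma --- $B_2$ and $G_2$, and multi-element $\Sigma$ --- remain unproved assertions. Until that finite but nontrivial bookkeeping is done (or replaced by a uniform argument, e.g.\ via the addition--deletion theorem along the adjoined hyperplanes), the proof is incomplete.
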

 
 We are ready to give the proof of our second main result. 

\begin{proof}[\textbf{Proof of Theorem \ref{thm:SF-characterize}}] 
First we show  $(1) \Leftrightarrow (2)$. 
We need some notations. 
Let $H_{\infty}: z=0$  denote the hyperplane at infinity.
Let $\A:=\A_{\Phi^+}$ be the Weyl arrangement of  $\Phi$. 
Define a multiplicity $m : \A \longrightarrow \{0, 1\}$ by $m(H_\alpha)=1$ if $\alpha \in \Sigma$ and $m(H_\alpha)=0$ otherwise. 
Then the Weyl subarrangement $\A_\Sigma$ can be identified with the multiarrangement $(\A,m)$. 
Moreover, the Ziegler restriction of $\cc\scS^k_{\Sigma}$ onto $H_{\infty}$  can be identified with $ D(\A,2k + m)$. 
By Lemma \ref{lem:shift-iso}, $\Sigma$ is free, i.e.~$\A_\Sigma$  is free $  \Leftrightarrow D(\A,2k + m)$ is free. 

By Theorem \ref{thm:Yoshinaga's criterion}, it suffices to show that  $\Sigma$ is $2$-locally simple $  \Leftrightarrow \cc\scS^k_{\Sigma}$ is $3$-locally free along $H_{\infty}$, i.e.~
for any $X \in L_3(\cc\scS^k_{\Sigma})$ with $X \subseteq H_{\infty}$, the localization $(\cc\scS^k_{\Sigma})_X$ is free.
By  \cite[Lemma 3.1]{AT11} for any such $X$, there exists $Y \in L_2(\A)$ such that $X = Y \cap  H_{\infty}$. 
Moreover, the relation $X = Y \cap  H_{\infty}$ implies 
$$(\cc\scS^k_{\Sigma})_X =  \cc\scS^k_{\Sigma_Y} (\Phi_Y) \times \varnothing_Y,$$
where $\varnothing_Y$ denotes the empty arrangement in $Y$. 
Thus $(\cc\scS^k_{\Sigma})_X$ is free $  \Leftrightarrow \cc\scS^k_{\Sigma_Y} (\Phi_Y)$ is free. 

When $\Phi_Y$ is reducible, it is easily seen that $\cc\scS^k_{\Sigma_Y} (\Phi_Y)$ is always free. 
Otherwise, by Lemma \ref{lem:2-dim}, it is free if and only if   $\Sigma_Y=\emptyset$ or $\Sigma_Y \cap \Delta_Y \ne \emptyset$. 
Hence if  $\Sigma$ is $2$-locally simple, then $\cc\scS^k_{\Sigma}$ is $3$-locally free along $H_{\infty}$. 
Conversely, suppose that $\cc\scS^k_{\Sigma}$ is $3$-locally free along $H_{\infty}$ but $\Sigma$ is not $2$-locally simple. 
Then there exists $Y \in L_2(\A)$ such that $\Phi_Y$ is irreducible, $\Sigma_Y\ne\emptyset$ and $\Sigma_Y \cap \Delta_Y = \emptyset$. 
By Lemma \ref{lem:2-dim}, $   \cc\scS^k_{\Sigma_Y} (\Phi_Y)$ is not free. 
Define $X := Y \cap  H_{\infty}$. 
Then by the preceding paragraph,  $(\cc\scS^k_{\Sigma})_X$ is not free. 
This contradicts the $3$-local freeness along $H_{\infty}$ of $\cc\scS^k_{\Sigma}$. 

Next we show  $(2) \Leftrightarrow (3)$. 
Note that by Theorem \ref{thm:WC}, $\CO=\LC$ for any irreducible root system $\Phi$. 
Suppose $\Phi \ne G_2$. 
Then any rank $2$ irreducible root subsystem $\Phi'$ of $\Phi$ is of type $A_2$ or $B_2$. 
 By Remark \ref{rem:LC-LS}(\ref{item:A2B2}), $\LC=\LS$ in $\Phi'$. 
 Therefore, for any  $\Sigma\subseteq\Phi^+$, 
 $$\Sigma \in \CO \Leftrightarrow  \Sigma\in \LC \Leftrightarrow \Sigma\in \LS.$$

For $\Phi = G_2$, the equivalence $(2) \Leftrightarrow (3)$  is proved by a direct check in Figure \ref{fig:G2}. 
The main reason why the exceptional cases exist is that the affine hyperplanes orthogonal to the highest root $3\alpha_1+2\alpha_2$ have non-facet intersections with three different upper closed alcoves (the alcoves in green in Figure \ref{fig:G2}). 
Moreover, only one of these non-facet intersections can be lifted to a facet intersection by an affine hyperplane orthogonal to a short root. 
More precisely, the non-facet intersection at the point $\varpi^\vee_1 +\varpi^\vee_2$ of the alcove furthest away from the origin can be lifted to a facet supported by the ceiling $H_{\alpha_1}^1$. 
In particular, $\Sigma=\{\alpha_1, 3\alpha_1+2\alpha_2\}$ is not compatible. 
However, $\Sigma$  is $2$-locally simple since it contains the simple root $\alpha_1$.
\end{proof}

  We close this section by giving an example to illustrate the applicability of our Theorems \ref{thm:WC} and \ref{thm:SF-characterize}.  
  
  \begin{example}
\label{ex:F4}
Let $\Phi=F_4$ with $\Delta=\{\alpha_1,\alpha_2,\alpha_3,\alpha_4\}$. 
Suppose the Dynkin diagram of $\Phi$ is given by $\alpha_1\,\mbox{---}\,\alpha_2 \overset{\longrightarrow}{=\joinrel=} \alpha_3 \,\mbox{---}\, \alpha_4$ where $\alpha_1,\alpha_2$ are the long simple roots. 
 Let    $\Sigma_1=\{\alpha_2, \alpha_2+2\alpha_3\}$ and $\Sigma_2=\{\alpha_2, \alpha_2+2\alpha_3,\alpha_1+\alpha_2+\alpha_3+\alpha_4\}$. 
We will check the compatibility of $\Sigma_1$ and $\Sigma_2$ by using  Theorem \ref{thm:WC}. 
In the type $F_4$ case, $\CO = \NC$ so it suffices to check the negative coclosedness of these sets. 
The set  $\Sigma_1$ is negatively coclosed by the same reason as in the type $B_2$ example in Remark \ref{rem:LC-LS}(\ref{item:NC-CC}). 
However, $\Sigma_2$ is not negatively coclosed since  $\alpha_1+\alpha_2+\alpha_3+\alpha_4=    (\alpha_1+\alpha_2)+(\alpha_3+\alpha_4)$ and $ (\alpha_1+\alpha_2, \alpha_3+\alpha_4)=(\alpha_2, \alpha_3)<0$ but neither $\alpha_1+\alpha_2$ nor $\alpha_3+\alpha_4$ belongs to $\Sigma_2$. 
As a result, $\Sigma_1$ is Shi-free while $\Sigma_2$ is not by Theorem \ref{thm:SF-characterize}. 

The compatibility of $\Sigma_1$ and $\Sigma_2$ is, however, very complicated to check by using  Definition \ref{def:compatibleW} or Theorem \ref{thm:CO}. 
The former requires the information of how the affine hyperplanes orthogonal to the roots in $\Sigma_1$ or $\Sigma_2$ intersect with the upper closed alcoves in $P^\diamondsuit$. 
This is difficult to see in dimension $4$ (or higher). 
The latter requires the calculation on the characteristic, Ehrhart quasi-polynomials and $\A$-Eulerian polynomial. 
Computing the $\A$-Eulerian polynomial is enormous: We need in principle to compute the descent of every Weyl group element (in this case, $|W(F_4)|=1152$).
For this reason, even if we know a subset $\Sigma\subseteq\Phi^+$ is compatible, it is still difficult to compute the characteristic quasi-polynomial of $\Sigma$ by using the formulas in Theorem \ref{thm:CO}.
\end{example}

\section{Further comments and remarks}
\label{sec:remark}

In this section we address some further comments and remarks.
\begin{enumerate}[(A)]
\item 
In Theorem \ref{thm:SF-characterize}, we discussed the freeness of $\cc\scS^k_{\Sigma}$ and $\A_\Sigma$, and showed that they are closely related. 
For a given free arrangement, there is another important algebraic invariant called the \emph{exponents}. 
More precisely, when an arrangement $\B$ is free, we may choose a homogeneous basis $\{\theta_1, \ldots, \theta_\ell\}$ for $D(\B)$.
Then the degrees of the $\theta_i$'s are called the \tbf{exponents} of $\B$ \cite[Definition 4.25]{OT92}. 
The exponents of $\cc\scS^k_{\Sigma}$ and $\A_\Sigma$ are also  closely related via the following theorem of Abe-Terao \cite{AT16}.

\begin{theorem}{{\cite[Theorem 1.6]{AT16}}}
 \label{thm:exp}
 Let $\h$ denote the Coxeter number of an irreducible root system $\Phi$.
Let $k \in \Z_{>0}$ and $\Sigma\subseteq\Phi^+$. 
 Then, $\cc\scS^k_{\Sigma}$  is free with exponents $(1, k\h + e_1, \ldots, k\h + e_\ell)$ if and only if $\cc\scS^k_{-\Sigma}$ is free with exponents $(1, k\h -e_1, \ldots, k\h - e_\ell)$. 
 In this case, $\A_\Sigma$  is also free with exponents $(e_1,\ldots, e_\ell)$.
\end{theorem}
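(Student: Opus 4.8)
The plan is to transfer everything to the hyperplane at infinity via Yoshinaga's criterion, exactly as in the proof of Theorem~\ref{thm:SF-characterize}, and then to establish a freeness--exponent duality between the two resulting Ziegler multiarrangements. Assume $\ell\ge3$ (the base case $\ell\le 2$ being handled directly, cf.~Lemma~\ref{lem:2-dim}). Let $\A=\A_{\Phi^+}$ and let $m$ be the $\{0,1\}$-multiplicity with $m(H_\alpha)=1$ iff $\alpha\in\Sigma$. Counting the parallel hyperplanes orthogonal to each root shows that the Ziegler restriction of $\cc\scS^k_{\Sigma}$ onto $H_\infty$ is $(\A,2k+m)$, while that of $\cc\scS^k_{-\Sigma}$ is $(\A,2k-m)$. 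Since the cone of an affine arrangement always carries the radial (Euler) derivation of degree $1$, Ziegler's theorem identifies the exponents of the cone with $(1)$ adjoined to the exponents of its Ziegler restriction. Thus by Theorem~\ref{thm:Yoshinaga's criterion} the whole statement reduces to two tasks: (a) a duality for the multiarrangements $(\A,2k\pm m)$, and (b) the agreement of the two $3$-local freeness conditions along $H_\infty$.

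For the $\cc\scS^k_{\Sigma}$ side I would use the shift isomorphism of Lemma~\ref{lem:shift-iso}, which supplies a degree-shifting $S$-module isomorphism $D(\A,m)\cong D(\A,2k+m)$ raising the degree of each homogeneous generator by $k\h$. Consequently $(\A,2k+m)$ is free with exponents $(k\h+e_1,\dots,k\h+e_\ell)$ if and only if $\A_\Sigma=(\A,m)$ is free with exponents $(e_1,\dots,e_\ell)$. Re-attaching the infinity exponent $1$ yields the exponents $(1,k\h+e_i)$ for $\cc\scS^k_{\Sigma}$ and simultaneously proves the last assertion that $\A_\Sigma$ is free with exponents $(e_i)$.

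The main obstacle is the dual statement: $(\A,2k-m)$ is free with exponents $(k\h-e_1,\dots,k\h-e_\ell)$ if and only if $(\A,m)$ is free with exponents $(e_1,\dots,e_\ell)$. Here the reflection $e_i\mapsto-e_i$ is taken about the balanced multiplicity $2k$, for which $(\A,2k)$ is free with all exponents equal to $k\h$; crucially, this reflection is invisible to Lemma~\ref{lem:shift-iso} alone, since that isomorphism only raises a multiplicity and cannot be run downward. To produce it I would invoke Yoshinaga's primitive-derivation theory for Coxeter multiarrangements: the primitive derivation together with the affine connection gives mutually inverse raising and lowering operators that shift the uniform part of the multiplicity by $2$ and the degrees by $\h$, and the accompanying perfect pairing interchanges $D(\A,2k+m)$ and $D(\A,2k-m)$ while reflecting the exponents around $k\h$. (Equivalently, one may build both multiarrangements out of $(\A,2k)$ by iterated use of the Multiple Addition and Deletion Theorems, recording the exponent change at each single-hyperplane step.) Combined with the previous paragraph, this delivers the freeness equivalence and the claimed exponent reflection.

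It remains to match the $3$-local freeness conditions. As in the proof of Theorem~\ref{thm:SF-characterize}, every $X\in L_3$ with $X\subseteq H_\infty$ is of the form $X=Y\cap H_\infty$ for some $Y\in L_2(\A)$, and the localization is free precisely when the rank-$2$ arrangement $\cc\scS^k_{\Sigma_Y}(\Phi_Y)$ (respectively $\cc\scS^k_{-\Sigma_Y}(\Phi_Y)$) is free. By the same rank-$2$ reasoning as in Lemma~\ref{lem:2-dim}, freeness in both cases is governed solely by whether $\Sigma_Y$ is $2$-locally simple, a condition that does not depend on the sign; hence the two $3$-local freeness conditions hold simultaneously. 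Together with the Ziegler duality this proves the equivalence of freeness and fixes the exponents, completing the argument.
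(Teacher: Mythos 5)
This statement is not proved in the paper at all: it is quoted verbatim from Abe--Terao \cite[Theorem 1.6]{AT16} and used as a black box, so there is no internal proof to compare yours against. That said, your outline is essentially a faithful reconstruction of the strategy of the cited source: pass to the Ziegler restrictions $(\A,2k+m)$ and $(\A,2k-m)$ onto $H_\infty$, apply Theorem \ref{thm:Yoshinaga's criterion}, match the two $3$-local freeness conditions via rank-$2$ localizations, and finish with a freeness/exponent duality for the pair of multiarrangements.

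Two caveats. First, the entire difficulty of the theorem is concentrated in the step you merely ``invoke'': the duality exchanging $D(\A,2k+m)$ and $D(\A,2k-m)$ while reflecting exponents about $k\h$. Lemma \ref{lem:shift-iso} as stated only raises a $\{0,1\}$-valued multiplicity and cannot reach $(\A,2k-m)$ from $(\A,m)$; indeed $2k-m=2(k-1)+(2-m)$ with $2-m$ valued in $\{1,2\}$, so it is not of the form $2k'+m'$ with $m'$ a $\{0,1\}$-multiplicity. One genuinely needs the extension of the Abe--Yoshinaga shift isomorphism to multiplicities valued in $\{-1,0,1\}$ together with the primitive-derivation pairing, and as written your appeal to ``raising and lowering operators'' comes close to assuming the result being proved. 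Second, Lemma \ref{lem:2-dim} as recorded in this paper concerns only $\cc\scS^k_{\Sigma}$; your claim that the freeness of the rank-$2$ cones $\cc\scS^k_{-\Sigma_Y}(\Phi_Y)$ is governed by the same $2$-local simplicity condition is correct but is itself part of the content of \cite[Theorem 1.6]{AT16} and would require its own case check over the rank-$2$ types $A_1^2$, $A_2$, $B_2$, $G_2$. So the skeleton is right, but the two load-bearing steps are cited rather than established.
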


When $\Sigma$ is an ideal of $\Phi^+$, there exists a nice combinatorial description of the exponents of $\A_\Sigma$ in terms of the dual partition of the \emph{height distribution} of $\Sigma$  \cite[Theorem 1.1]{ABCHT16}. 
It would be interesting to find a combinatorial description of the exponents of $\A_\Sigma$ when $\Sigma$ is both free and compatible.

\vskip .5em 

\item Given a coclosed subset  $\Sigma\subseteq\Phi^+$, Slofstra \cite{Sl16} showed that verifying the freeness of $\Sigma$ amounts to verifying the freeness of all  localizations on the flats of codimension at most $4$.
\begin{theorem}{{\cite[Theorem 3.1]{Sl16}}}
 \label{thm:cr4}
 Let $\Sigma\subseteq\Phi^+$ be a coclosed subset. 
Then  $\A_\Sigma$ is free if and only if the localization  $(\A_\Sigma)_X$
 is free for every $X \in L_p(\A_\Sigma)$ of codimension $p \le 4$.
\end{theorem}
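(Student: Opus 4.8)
The plan is to prove the two implications separately, with the forward direction being immediate and the converse carrying all the weight. For the ``only if'' direction I would simply invoke the standard fact that freeness is inherited by localizations: if $\A_\Sigma$ is free, then $(\A_\Sigma)_X$ is free for every $X \in L(\A_\Sigma)$, in particular for every flat of codimension at most $4$ (freeness passes to localizations; see \cite{OT92}). This direction uses nothing about $\Sigma$ being coclosed.

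The converse is the substance, and I would argue by induction on the rank $\ell = \operatorname{rank}(\A_\Sigma)$. The base case $\ell \le 4$ is vacuous: the center $X_0 := \bigcap_{H \in \A_\Sigma} H$ is a flat of codimension $\ell \le 4$ with $(\A_\Sigma)_{X_0} = \A_\Sigma$, so freeness is exactly one of the hypotheses. For the inductive step, assume $\ell \ge 5$, fix any $H \in \A_\Sigma$, and apply Yoshinaga's criterion (Theorem \ref{thm:Yoshinaga's criterion}): it suffices to establish that (a) $\A_\Sigma$ is $3$-locally free along $H$, and (b) the Ziegler restriction $(\A_\Sigma^H, m^H)$ is free. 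Condition (a) is immediate, since the relevant localizations are taken at flats $X \in L_3(\A_\Sigma)$, of codimension $3 \le 4$, and these are free by hypothesis.

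All the difficulty is concentrated in (b), the freeness of the Ziegler restriction, which is a multiarrangement of rank $\ell - 1$. Here I would combine two ingredients. First, a local-global principle for multiarrangements analogous to Yoshinaga's criterion, reducing the freeness of $(\A_\Sigma^H, m^H)$ to the freeness of its localizations at flats of codimension $3$ inside $H$. Second, the compatibility between localization and Ziegler restriction: a codimension-$3$ flat $\overline{X}$ of $\A_\Sigma^H$ lifts to a codimension-$4$ flat $X \in L_4(\A_\Sigma)$ with $X \subseteq H$, and the localization of $(\A_\Sigma^H, m^H)$ at $\overline{X}$ coincides with the Ziegler restriction of the localization $(\A_\Sigma)_X$. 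Since $(\A_\Sigma)_X$ is free by hypothesis, its Ziegler restriction is free, yielding freeness of the localized multiarrangement at $\overline{X}$. This is precisely where the bound $4 = 3 + 1$ originates: the three codimensions consumed by the $3$-local freeness in Yoshinaga's criterion, plus one more codimension to descend from the Ziegler restriction down to the ambient flats.

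The main obstacle, and the place where coclosedness must enter decisively, is making the multiarrangement step rigorous, since a naive local-global criterion for multiarrangements fails in general. I would exploit the structure of $\Sigma$ here. One checks readily that coclosedness is preserved under localization: if $\alpha \in \Sigma_X = \Sigma \cap \Phi_X^+$ decomposes as $\alpha = d_1\beta_1 + d_2\beta_2$ with $\beta_1,\beta_2 \in \Phi_X^+$ and $d_1,d_2 \in \Z_{>0}$, then the same decomposition holds in $\Phi$, so coclosedness of $\Sigma$ forces some $\beta_i \in \Sigma$, and since $\beta_i \in \Phi_X^+$ we get $\beta_i \in \Sigma_X$. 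This keeps the induction inside the class of coclosed subarrangements and ensures that the Ziegler restrictions and their localizations retain the combinatorial regularity required for the passage from local to global freeness. Verifying that coclosedness supplies \emph{exactly} the hypotheses demanded by the multiarrangement criterion, rather than being merely formally preserved, is the crux of the argument and the step I expect to be genuinely delicate.
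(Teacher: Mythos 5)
Note first that the paper does not prove this statement at all: it is quoted verbatim as \cite[Theorem 3.1]{Sl16}, so there is no internal proof to compare against. Judged on its own merits, your proposal has a genuine gap at exactly the point you flag as ``the crux.'' The forward direction and the skeleton of the converse are fine: induction on rank, Yoshinaga's criterion (Theorem \ref{thm:Yoshinaga's criterion}), the observation that $3$-local freeness along $H$ is free of charge from the codimension-$3$ hypothesis, and the correct identification that localization commutes with Ziegler restriction, so that each rank-$3$ localization of $(\A_\Sigma^H, m^H)$ is the Ziegler restriction of some free $(\A_\Sigma)_X$ with $X \in L_4(\A_\Sigma)$ and hence is free. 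The problem is the ingredient you then need: a local-to-global principle asserting that a multiarrangement of rank $\ell-1 \ge 4$ is free once all of its rank-$3$ localizations are free. No such principle exists, even for simple arrangements --- local freeness in any fixed codimension does not imply freeness, which is precisely why Yoshinaga's criterion pairs local freeness with the \emph{separate} hypothesis that the Ziegler restriction is free. For multiarrangements there is no further Ziegler-type restriction to iterate the criterion with, so the induction cannot be closed this way. Your appeal to coclosedness does not repair this: you verify only that coclosedness is preserved under localization (true, and easy), but you never identify a multiarrangement freeness criterion whose hypotheses coclosedness would supply, and your own closing sentence concedes that this verification has not been carried out. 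As it stands, the entire content of the theorem --- why codimension $4$ suffices, and why coclosedness is needed at all --- is deferred to an unproved assertion.

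For orientation: Slofstra's actual argument does run through Yoshinaga's criterion and the Ziegler restriction, but the freeness of $(\A_\Sigma^H, m^H)$ is not obtained by a naive localization principle; it requires a substantive analysis of the multiplicities arising on the restriction (this is where the coclosedness of $\Sigma$, i.e.\ the root-theoretic structure of the localizations $\Sigma_X$, does real work), together with results on free multiarrangements that go beyond ``rank $\le 2$ multiarrangements are always free.'' If you want to complete your outline, that is the component you must supply; the rest of your reduction is sound bookkeeping.
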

We conjecture that the theorem above can be extended to compatible subsets. 
 
\begin{conjecture} 
 \label{conj:comp}
 Let $\Sigma\subseteq\Phi^+$ be a compatible subset. 
Then  $\A_\Sigma$ is free if and only if the localization  $(\A_\Sigma)_X$
 is free for every $X \in L_p(\A_\Sigma)$ of codimension $p \le 4$.
\end{conjecture}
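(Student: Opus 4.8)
The forward implication is immediate from the standard fact that every localization of a free arrangement is free: if $\A_\Sigma$ is free then so is each $(\A_\Sigma)_X$. The substance is the converse, and the natural strategy is to upgrade Slofstra's proof of Theorem \ref{thm:cr4} from the class of coclosed subsets to the strictly larger class of compatible subsets. The structural fact that makes such an upgrade conceivable is that \emph{compatibility is inherited by localizations}. Indeed, by Theorem \ref{thm:WC} a subset is compatible precisely when it is negatively coclosed (the $G_2$ exceptions aside), and negative coclosedness refers only to additive decompositions $\alpha = d_1\beta_1 + d_2\beta_2$ and the sign of $(\beta_1,\beta_2)$, both of which are intrinsic to any root subsystem $\Phi_X$. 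Hence for $\Sigma$ compatible and $X \in L(\A)$, the localization $\Sigma_X$ is again compatible in $\Phi_X$, so an induction on codimension never leaves the class.

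Concretely, I would induct on $\rk \Phi$. If $\rk \Phi \le 4$, then the center $X_0 = \bigcap_{H \in \A_\Sigma} H$ is a flat of codimension $\rk \A_\Sigma \le 4$ with $(\A_\Sigma)_{X_0} = \A_\Sigma$, so the hypothesis applied to $X_0$ already yields the freeness of $\A_\Sigma$. Suppose $\rk \Phi \ge 5$ and that every codimension-$\le 4$ localization of $\A_\Sigma$ is free. A first attempt is to fix a hyperplane $H \in \A_\Sigma$ and apply Yoshinaga's criterion (Theorem \ref{thm:Yoshinaga's criterion}), by which $\A_\Sigma$ is free if and only if its Ziegler restriction $(\A_\Sigma^H, m^H)$ is free and $\A_\Sigma$ is $3$-locally free along $H$. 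The $3$-local freeness comes for free: each $X \in L_3(\A_\Sigma)$ with $X \subseteq H$ has codimension $3 \le 4$, so $(\A_\Sigma)_X$ is free by hypothesis. The entire problem therefore reduces to the freeness of the Ziegler restriction.

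Controlling this Ziegler restriction is the crux, and I expect it to be the main obstacle. It is a rank-$(\ell-1)$ \emph{multi}arrangement rather than a simple Weyl subarrangement, so the induction does not close by a naive appeal to the statement for simple arrangements; one needs either a multiarrangement analogue of the local-global principle or a way to re-express the required freeness through simple subarrangements. This is presumably the technical heart of Slofstra's argument, and the real work is to verify that his reduction uses only the $2$-local, angle-based information encoded in negative coclosedness rather than the full strength of coclosedness. The delicate point is exactly the gap $\CC \subsetneq \CO$: in doubly-laced localizations there exist negatively-coclosed subsets that are not coclosed, the smallest being the $B_2$ set $\{2\alpha_1 + \alpha_2, \alpha_2\}$ of Remark \ref{rem:LC-LS}(\ref{item:NC-CC}), and one must confirm that such configurations do not obstruct the inductive step.

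Finally, the $G_2$ exceptional compatible sets of Definition \ref{def:G2}(\ref{item:CO-WC}) require no separate treatment in the inductive step: since any rank-$2$ irreducible subsystem of a root system $\Phi \ne G_2$ is of type $A_2$ or $B_2$, type $G_2$ can arise only as $\Phi$ itself, which is of rank $2$ and hence covered by the trivial base case. Consequently the reverse implication would follow once Slofstra's core estimate is shown to survive the passage from coclosed to negatively coclosed subsets.
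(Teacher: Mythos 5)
This statement is a \emph{conjecture} in the paper, not a theorem: the authors offer no proof, observing only that it reduces to Slofstra's Theorem \ref{thm:cr4} when $\Phi$ is simply-laced (where $\CC=\NC=\CO$), that it is vacuous for $\rk\Phi\le 4$, and that it therefore remains open precisely for $B_\ell$ and $C_\ell$ with $\ell\ge 5$. Your proposal does not close that gap. The parts you do carry out are sound: the forward implication is the standard fact that localizations of free arrangements are free; compatibility is indeed inherited by localizations (localization of a localization is a localization, and by Theorem \ref{thm:WC} compatibility is a $2$-local, hence localization-stable, condition); and Yoshinaga's criterion (Theorem \ref{thm:Yoshinaga's criterion}) does dispose of the $3$-local freeness along a chosen $H\in\A_\Sigma$ using the codimension-$\le 4$ hypothesis. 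But the entire content of the converse is then concentrated in the freeness of the Ziegler restriction $(\A_\Sigma^H,m^H)$, and at exactly that point you write that you ``expect it to be the main obstacle'' and that ``the real work is to verify'' that Slofstra's argument survives the passage from $\CC$ to $\CO$. That verification is the conjecture; deferring it is not proving it.

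Concretely, the missing step is this: for $\Phi$ of type $B_\ell$ or $C_\ell$, $\ell\ge 5$, and $\Sigma$ compatible but not coclosed (so that some rank-$2$ localization of type $B_2$ equals $\{2\alpha_1+\alpha_2,\alpha_2\}$ as in Remark \ref{rem:LC-LS}(\ref{item:NC-CC})), you must show that $4$-local freeness of $\A_\Sigma$ forces freeness of the multiarrangement $(\A_\Sigma^H,m^H)$. Your induction on $\rk\Phi$ cannot supply this, because the Ziegler restriction is a multiarrangement on the restricted (generally non-reduced, non-Weyl) arrangement $\A_\Sigma^H$, so the inductive hypothesis about simple Weyl subarrangements does not apply to it; and no multiarrangement analogue of the local-to-global principle is available in the paper or in the proposal. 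Your framing is a reasonable reduction and correctly isolates where the difficulty lives (and correctly notes that the $G_2$ exceptions play no role in rank $\ge 3$), but as a proof it is incomplete at the decisive step, which is unsurprising given that the statement is presented by the authors as open.
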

Suppose $\Phi$ is a simply-laced root system.
By  Remark \ref{rem:LC-LS}(\ref{item:NC-CC}) and  Theorem \ref{thm:WC}, $\CC = \NC=\CO$. 
Then Conjecture \ref{conj:comp} is equivalent to Theorem \ref{thm:cr4}. 
For root systems of rank $\le 4$, there is nothing to be done.
Thus the conjecture remains open only when $\Phi$ is of type $B_\ell$ or $C_\ell$ for $\ell \ge 5$.

\vskip .5em 

\item In the case of type $A$, the  Shi-freeness of an arbitrary subset $\Sigma\subseteq\Phi^+$ can be characterized by combinatorial properties of graphs.
Let $\{\epsilon_1, \ldots, \epsilon_{\ell}\}$ be an orthonormal basis for  $V=\R^\ell$. 
Define $U : = \left\{ \sum_{i=1}^{\ell} x_i\epsilon_i  \in V \,\middle\vert\, \sum_{i=1}^{\ell} x_i=0\right\} \simeq \R^{\ell-1}$. 
 The set 
 $$\Phi(A_{\ell-1}) = \{\pm(\epsilon_i - \epsilon_j) \mid 1 \le i<j \le \ell\}$$
is a root system of type $A_{\ell-1}$ in $U$ with a positive system
$$
\Phi^+(A_{\ell-1}) =  \{ \epsilon_i-\epsilon_j \mid 1 \le i <  j \le \ell\}.
$$
Let $G$ be a simple graph (i.e.~no loops and no multiple edges) with vertex set $V_G = [\ell]:=\{1,2,\ldots,\ell\}$ and edge set $E_G$. 
Define 
$$\Sigma(G) := \{  \epsilon_i-\epsilon_j  \mid \{i,j\}  \in E_G \,( i <  j)\} \subseteq \Phi^+(A_{\ell-1}).$$ 
Thus any subset of $\Phi^+(A_{\ell-1})$ is completely defined by a simple graph $G$. 
The corresponding Weyl subarrangement $\A_{\Sigma(G)}$ is known as the \textbf{graphic arrangement}.
By  Theorem \ref{thm:SF-characterize} and \cite[Corollary 15]{TT21}, we have the following graphic characterization for the  Shi-freeness of $\Sigma(G)$.

\begin{corollary}
 \label{cor:interval}
 Let  $G = (V_G,E_G)$ be a graph with $|V_G|=\ell$.  The following  are equivalent. 
\begin{enumerate}[(i)] 
\item $G$ has a vertex-labeling using $[\ell]$ so that $\Sigma(G)$ is Shi-free. 
\item $G$ has a vertex-labeling using $[\ell]$ so that $\Sigma(G)$ is compatible and free. 
\item  $G$ is an interval graph. 
    \end{enumerate}
\end{corollary}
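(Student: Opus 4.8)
The plan is to combine our Theorem~\ref{thm:SF-characterize} with the graph-theoretic characterizations already in the literature, so that the corollary becomes a translation exercise resting on the classical fact that a finite graph is an interval graph precisely when it is simultaneously chordal and a cocomparability graph.

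First I would dispose of $(i)\Leftrightarrow(ii)$. Since $A_{\ell-1}$ is simply-laced, in particular $\Phi(A_{\ell-1})\ne G_2$, part~(i) of the third condition in Theorem~\ref{thm:SF-characterize} applies verbatim: for any fixed vertex-labeling, $\Sigma(G)$ is Shi-free if and only if it is compatible and free. As conditions (i) and (ii) of the corollary differ only by replacing ``Shi-free'' with ``compatible and free'' under the same existential quantifier over labelings, and these properties coincide labeling-by-labeling, the equivalence $(i)\Leftrightarrow(ii)$ is immediate.

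The substance lies in $(ii)\Leftrightarrow(iii)$, which I would deduce from \cite[Corollary 15]{TT21}. Conceptually, the freeness of the graphic arrangement $\A_{\Sigma(G)}$ is labeling-independent and, by the Stanley--Edelman--Reiner theorem \cite{St72, ER94}, holds exactly when $G$ is chordal; whereas the compatibility of $\Sigma(G)$ does depend on the labeling, and by the type-$A$ results \cite[Theorems 2 and 9]{TT21} the existence of a labeling making $\Sigma(G)$ compatible is equivalent to $G$ being a cocomparability graph. The bridge is the classical characterization of interval graphs as exactly the chordal cocomparability graphs, which is the content packaged in \cite[Corollary 15]{TT21}; invoking it closes the equivalence.

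I do not expect a genuine obstacle here, since the real work is carried out in the cited references and our role is only to supply the reduction ``Shi-free $=$ compatible and free'' via Theorem~\ref{thm:SF-characterize}. The one point demanding a moment's care is the interaction between the labeling and the two properties: chordality is an intrinsic property of $G$, whereas compatibility is tied to the chosen ordering of the vertices through the ambient root poset. This resolves at once precisely \emph{because} freeness is labeling-independent, so that once $G$ is chordal every labeling makes $\Sigma(G)$ free; in particular any labeling witnessing $G$ as a cocomparability graph (hence making $\Sigma(G)$ compatible) simultaneously makes $\Sigma(G)$ free, and the single-labeling demand in condition (ii) is met automatically.
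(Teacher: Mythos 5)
Your argument is correct and follows exactly the route the paper takes: the paper derives the corollary by combining Theorem~\ref{thm:SF-characterize} (which, since $A_{\ell-1}\ne G_2$, gives ``Shi-free $\Leftrightarrow$ compatible and free'' labeling-by-labeling) with \cite[Corollary 15]{TT21}. Your additional remarks on the labeling-independence of freeness versus the labeling-dependence of compatibility correctly fill in the only point the paper leaves implicit.
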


Interval graphs have several different characterizations. 
Among others, one of the relevant  characterizations is that the graph has an ordering  $v_1    < \cdots  < v_\ell$ of its vertices such that  if $ i < k < j$ and $\{v_i, v_j\}$ is an edge, then $\{v_i, v_k\}$ is an edge.
\end{enumerate}
 
\vskip 1em
\vn 
\textbf{Acknowledgements.} 
The first author is partially supported by JSPS KAKENHI grant numbers JP18KK0389 and 23K17298.

 \bibliographystyle{abbrv}
\bibliography{references}

\end{document}